\theoremstyle{plain}
\newtheorem{thm}{Theorem}[section]
\newtheorem{defi}[thm]{Definition}
\newtheorem{lem}[thm]{Lemma}
\newtheorem{prop}[thm]{Proposition}
\newtheorem{rmk}[thm]{Remark}
\newcommand{\R}{\mathbb{R}}
\newcommand{\N}{\mathbb{N}}
\newcommand{\zer}{{\rm zer}}
\DeclareMathOperator{\Id}{Id}
\DeclareMathOperator{\distance}{dist}
\date{}
\title{On a Stochastic Differential Equation with Correction Term Governed by a Monotone and Lipschitz Continuous Operator}
\author{Radu Ioan Bo\cb{t} \footnote{Faculty of Mathematics, University of Vienna, Oskar-Morgenstern-Platz 1, 1090 Vienna, Austria, e-mail: \url{radu.bot@univie.ac.at}. Research partially supported by FWF (Austrian Science Fund), projects W 1260 and P 34922-N, and by a grant of the Romanian Ministry of Research, Innovation and Digitization, CNCS -
UEFISCDI, project number PN-III-P1-1.1-TE-2021-0138, within PNCDI III.}  \and Chiara Schindler \footnote{Faculty of Mathematics, University of Vienna, Oskar-Morgenstern-Platz 1, 1090 Vienna, Austria, e-mail: \url{chiara.schindler@univie.ac.at}.}}
\begin{document}

\maketitle

\begin{abstract}
    In our pursuit of finding a zero for a monotone and Lipschitz continuous operator $M : \R^n \rightarrow \R^n$ amidst noisy evaluations, we explore an associated differential equation within a stochastic framework, incorporating a correction term. We present a result establishing the existence and uniqueness of solutions for the stochastic differential equations under examination. Additionally, assuming that the diffusion term is square-integrable, we demonstrate the almost sure convergence of the trajectory process $X(t)$ to a zero of $M$ and of $\|M(X(t))\|$ to $0$ as $t \rightarrow +\infty$. Furthermore, we provide ergodic upper bounds and ergodic convergence rates in expectation for $\|M(X(t))\|^2$ and $\langle M(X(t), X(t)-x^*\rangle$, where $x^*$ is an arbitrary zero of the monotone operator. Subsequently, we apply these findings to a minimax problem. Finally, we analyze two temporal discretizations of the continuous-time models, resulting in stochastic variants of the Optimistic Gradient Descent Ascent  and Extragradient methods, respectively, and assess their convergence properties.
\end{abstract}

\noindent \textbf{Key Words.} monotone equation,  stochastic differential equation, existence and uniqueness of solutions, ergodic upper bounds, ergodic convergence rates, stochastic algorithms for monotone equations
\vspace{1ex}

\noindent \textbf{AMS subject classification.} 34F05,  47H05,  60H10,  68W20

\section{Introduction}\label{sec1}

For a monotone and $L$-Lipschitz continuous operator $M:\R^n \rightarrow \R^n$, we examine the equation
\begin{align}\label{eq:find-zeros}
    M(x) = 0.
\end{align}
We assume that the set of solutions of \eqref{eq:find-zeros}, denoted by $\zer M:=\{x \in \R^n: M(x) = 0\}$, is nonempty. The operator $M:\R^n \rightarrow \R^n$ is said to be monotone if $\langle M(x) - M(y), x-y \rangle \geq 0$ for all $x,y \in \R^n$.

For $M := \nabla f$, where $f : \R^n \rightarrow \R$ is a convex and differentiable function with a Lipschitz continuous gradient, solving equation \eqref{eq:find-zeros} is equivalent to finding the global solutions of the optimization problem
\begin{align}\label{opt}
    \min_{x\in\R^n} f(x).
\end{align}

On the other hand, if we define
\begin{align*}
 M(x,y) := (\nabla_x \Phi(x,y), -\nabla_y \Phi(x,y)),
\end{align*}
where $\Phi : \R^p \times \R^q \rightarrow \R$ is a convex-concave function with Lipschitz continuous gradient, solving equation \eqref{eq:find-zeros} is equivalent to finding the saddle points of the minimax problem
\begin{align}\label{minimax}
    \min_{x\in\R^p}\max_{y\in\R^q} \Phi(x,y).
\end{align}

Inspired by the Newton and the Levenberg-Marquardt methods, Attouch and Svaiter introduced in ~\cite{attouch-svaiter} the following continuous time equation associated to \eqref{eq:find-zeros} in the setting of a real Hilbert space
\begin{align}\label{eq:deterministic-system}
    \begin{cases}
        &\dot{x}(t) + \mu(t) \frac{d}{dt}M(x(t)) + \mu(t) M(x(t)) = 0 \quad \forall t > 0,
        \\& x(0) = x_0,
    \end{cases}
\end{align}
where $\mu : [0,+\infty) \rightarrow (0,+\infty)$ was assumed to be bounded, continuous and absolutely continuous on bounded intervals. Besides proving the existence and uniqueness of strong global solutions, they showed, under suitable assumptions on the parameter function, that  $x(t)$ converges weakly to a zero of $M$ as well as $M(x(t)) \rightarrow 0$ as $t\rightarrow +\infty$. 

In many instances, the evaluation of the operator is subject to noise, which  motivates us to transfer ~\eqref{eq:deterministic-system} to a stochastic setting. To achieve this, we consider the formal expression of a continuous time system
\begin{align}\label{eq:SDE-formal}
\begin{cases}
    &dX(t) + \mu(t) dM(X(t)) = -\gamma(t)M(X(t)) dt + \sigma(t,X(t)) dW(t) \quad \forall t > 0,
    \\& X(0)=X_0,
\end{cases}
\end{align}
which can consistently with It\^{o}'s chain rule be understood as
\begin{align}
\tag{SDE-M}
\label{eq:SDE-M}
\begin{split}
    \begin{cases} &d(X(t)+\mu(t)M(X(t))) = - (\gamma(t) - \dot{\mu}(t)) M(X(t)) dt + \sigma(t,X(t)) dW(t) \quad \forall t > 0,
    \\& X(0)=X_0,
    \end{cases}
\end{split}
\end{align}
defined over a filtered probability space $(\Omega,\mathcal{F},\{\mathcal{F}_t\}_{t\geq 0},\mathbb{P})$ with the diffusion term $\sigma: \R_+ \times \R^n \rightarrow \R^{n \times m}$ being matrix-valued and measurable, $W$ a $m$-dimensional Brownian motion, and $X(\cdot)$ as well as $M(X(\cdot))$ are stochastic It\^{o} processes with the same $m$-dimensional Brownian motion $W$. The parameter functions $\mu : [0,+\infty) \rightarrow (0,+\infty)$ and $\gamma : [0,+\infty) \rightarrow (0,+\infty)$ are assumed to be continuous differentiable and, respectively, integrable. For the diffusion term we assume 
\begin{align}\label{eq:assumption-sigma}
\begin{cases}
    & \exists c_\sigma >0 \ \mbox{such that} \ \|\sigma(t,x')-\sigma(t,x)\|_F \leq c_\sigma\|x'-x\| \quad \forall t >0,\\
    & \sigma_\infty(t) := \sup_{x\in\R^n} \|\sigma(t,x)\|_F \leq \sigma_* \quad \forall t >0,
\end{cases}
\end{align}
where $\|\cdot\|_F$ denotes the Frobenius norm on $\R^{n \times m}$. 

Then,~\eqref{eq:SDE-M} can be rewritten as the system
\begin{align*}
\begin{cases}
    &dM(X(t)) = Y(t) dt + \sigma_M(t) d{W}(t),
    \\& dX(t) = (-\mu(t)Y(t) - \gamma(t)M(X(t))) dt + \sigma_X(t) d{W}(t) \quad \forall t > 0,
    \\& X(0)=X_0,
\end{cases}
\end{align*}
where
$$\sigma(t,X(t)) = \sigma_X(t) + \mu(t) \sigma_M(t)\quad \forall t \geq 0$$
and $\sigma_X, \sigma_M: \R_+ \rightarrow \R^{n \times m}$ are measurable.

Our goal is to explore the existence and uniqueness of trajectory solutions for \eqref{eq:SDE-M}, along with their long-term behavior in terms of both almost sure convergence and convergence rates. Furthermore, we demonstrate how these properties transfer to convergence rates for stochastic numerical methods obtained through temporal discretizations of the stochastic differential equation.

\subsection{Related works}\label{subsec11}

The simplest continuous time approach associated to the optimization problem \eqref{opt}, for a convex and differentiable function $f: \R^n \rightarrow \R$  with $L_{\nabla f}$-Lipschitz continuous gradient, is the gradient flow
\begin{align*}
\begin{cases}
    &\dot{x}(t) = -\nabla f(x(t)) \quad \forall t>0,
    \\& x(0) = x_0.
\end{cases}
\end{align*}
Its trajectory solution $x(t)$ converges to a global minimizer of $f$, while $f(x(t))$ converges to the minimal function value $\min f$ of $f$ with a convergence rate of $o(\frac{1}{t})$ as $t \rightarrow +\infty$.

Its discrete time counterpart, the gradient method, 
\begin{align*}
    x^{k+1} = x^k - \gamma \nabla f(x^k) \quad \forall k \geq 0,
\end{align*}
where $x^0 \in \R^n$ and $\gamma \in \left(0, \frac{1}{L_{\nabla f}}\right ]$, shares the same convergence properties. Specifically, the sequence $(x_k)_{k \geq 0}$ converges to a minimizer of $f$, and $f(x_k) - \min f = o(\frac{1}{k})$ as $k \rightarrow +\infty$.

In \cite{s.-fadili-attouch}, to accommodate instances where the gradient input is subject to noise, Maulen-Soto, Fadili and Attouch proposed the following stochastic differential equation
\begin{align}\label{sdeopt}
\begin{cases}
    &dX(t) = -\nabla f(X(t)) dt + \sigma(t,X(t)) dW(t) \quad \forall t>0,
    \\& X(0) = X_0,
\end{cases}
\end{align}
defined over a filtered probability space $(\Omega,\mathcal{F},\{\mathcal{F}_t\}_{t\geq 0},\mathbb{P})$ with $\sigma: \R_+ \times \R^n \rightarrow \R^{n \times m}$ being a matrix-valued measurable diffusion term fulfilling \eqref{eq:assumption-sigma}, and $W$ a $m$-dimensional Brownian motion. They derived for $f\left(\int_0^t X(s)ds \right) - \min f$ and $\frac{1}{t}\int_0^t f(X(s))ds - \min f$ upper bounds in expectation of ${\cal O}(\frac{1}{t}) + {\cal O}(\sigma^2_*)$.  Additionally, assuming that $\sigma_\infty$ is square-integrable, they demonstrated the almost sure convergence of the trajectory process to a minimizer of $f$ and of $f(X(t))$ to $\min f$ as $t \rightarrow +\infty$, and that the two above quantities exhibit convergence rates of ${\cal O}(\frac{1}{t})$ as $t \rightarrow +\infty$.

There is a strong link between this stochastic differential equation and the stochastic gradient descent algorithm with constant stepsize (\cite{chaudhari-sotto},~\cite{hu-li-li-liu},~\cite{li-tai-e},~\cite{li-malladi-arora},~\cite{orvieto-lucchi},~\cite{shi-su-jordan})
\begin{align*}
    x^{k+1} := x^k - \gamma \nabla f(x^k, \xi^k) \quad \forall k \geq 0,
\end{align*}
where $({\cal F}_k)_{k \geq 0}$ is an increasing family of $\sigma$-fields, $x^0 \in \R^n$ is ${\cal F}_0$-measurable,  $(\xi_k)_{k \geq 0}$ is such that $\nabla f(x, \xi^k)$ corresponds to a noisy observation of $\nabla f(x)$, with the properties that $\mathbb{E}(\nabla f(x, \xi^{k+1})|{\cal F}_k)= \nabla f(x)$ for all $x \in \R^n$ and all $k \geq 0$, and $\mathbb{E}(\|\nabla f(x^*, \xi^{k+1})\|^2|{\cal F}_k) \leq \sigma_{**}^2$ for a global minimizer $x^*$ of $f$ and all $k \geq 0$. Then (see, for instance, \cite{bach-moulines}) the upper bound in expectation of $f\left(\frac{1}{k}\sum_{i=0}^{k-1} x^i \right) - \min f$ is of ${\cal O}(\frac{1}{k}) + {\cal O}(\sigma^2_{**})$.

Recently, in \cite{mfa-tikh}, \eqref{sdeopt} has been enhanced with a Tikhonov regularization term, resulting in a stochastic differential equation which, under the same upper bounds and convergence rates in expectation, in the case when $\sigma_\infty$ is square-integrable, exhibits strong convergence of the trajectory process to the global minimizer of $f$ of minimum norm.

Furthermore, in \cite{mfao-secondorder}, the properties of first order stochastic differential equations have been transferred to second order ones using the time scaling and averaging methodology developed in \cite{attouch-bot-nguyen}. The latter generalize the second order Langevin process, and and demonstrate rapid convergence in expectation of the function values along the trajectory process to the minimal function value.

As for the monotone equation \eqref{eq:find-zeros}, one could replicate the gradient flow approach and associate with it the following continuous dynamics
\begin{align}\label{dynamicsmon}
\begin{cases}
    &\dot{x}(t) = -M(x(t)) \quad \forall t>0,
    \\& x(0) = x_0.
\end{cases}
\end{align}
However, only the ergodic trajectory $\frac{1}{t} \int_0^t x(s)ds$ converges to a zero of $M$ as $t \rightarrow +\infty$ (\cite{baillon-brezis}), while the trajectory $x(t)$ does not converge in general, as can be easily seen for the case of the counterclockwise rotation operator by $\frac{\pi}{2}$ radians in $\R^2$ (\cite{cominetti-peypouquet-sorin}). 

If the operator $M$ is $\rho$-cocoercive with constant $\rho >0$, meaning that
$$\langle M(x) - M(y), x-y \rangle \geq \rho \|M(x) - M(y)\|^2 \quad \forall x,y \in \R^n,$$
which obviously implies that $M$ is monotone and $\frac{1}{\rho}$-Lipschitz continuous, then the asymptotic behaviour of the trajectory of \eqref{dynamicsmon} improves substantially, even in the presence of small perturbations. Precisely, according to \cite[Theorem 11]{attouch-bot-nguyen}, in this case, the trajectory solution of
\begin{align}\label{dynamicsmonpert}
\begin{cases}
    &\dot{x}(t) = -M(x(t)) + g(t) \quad \forall t>0,
    \\& x(0) = x_0,
\end{cases}
\end{align}
where $g : [0, +\infty) \rightarrow \R^n$ is such that
\begin{equation}\label{cond:g}
\int_{t_0}^{+\infty} \|g(t)\| dt < +\infty \ \mbox{and} \ \int_{t_0}^{+\infty} t\|g(t)\|^2 dt < +\infty,
\end{equation}
converges to a zero of $M$, and it holds $\|M(x(t))\| = o(\frac{1}{\sqrt{t}})$ as $t \rightarrow + \infty$.

In the case when $M$ is a $\rho$-cocoercive operator with constant $\rho >0$, the following stochastic counterpart to \eqref{dynamicsmon} has been proposed in \cite{s.-fadili-attouch}
\begin{align}\label{sdemon}
\begin{cases}
    &dX(t) = -M(X(t)) dt + \sigma(t,X(t)) dW(t) \quad \forall t>0,
    \\& X(0) = X_0,
\end{cases}
\end{align}
defined over a filtered probability space $(\Omega,\mathcal{F},\{\mathcal{F}_t\}_{t\geq 0},\mathbb{P})$, where $\sigma: \R_+ \times \R^n \rightarrow \R^{n \times m}$ fulfills \eqref{eq:assumption-sigma}, and $W$ a $m$-dimensional Brownian motion. Alongside the existence and uniqueness of the trajectory process for \eqref{sdemon}, upper bounds in expectation of ${\cal O}(\frac{1}{t}) + {\cal O}(\sigma^2_*)$ for $\|\frac{1}{t} \int_0^t M(X(s))ds\|^2$ and $\frac{1}{t} \int_0^t \|X(s)\|^2ds$ have been derived. In case $M$ is $\kappa$-strongly monotone with constant $\kappa >0$, upper bounds in expectation of ${\cal O}(e^{-2\kappa t}) + {\cal O}(\sigma^2_*)$ for $\|X(t)-x^*\|^2$, where $x^*$ is the unique zero of $M$, have been also provided. Additionally, assuming that $\sigma_\infty$ is square-integrable,  it has been proved that there is almost sure convergence of $X(t)$ towards a zero of $M$ and of $\|M(X(t))\|$ towards $0$ as $t \rightarrow +\infty$. Moreover, $\|\frac{1}{t} \int_0^t M(X(s))ds\|^2$ and $\frac{1}{t} \int_0^t \|X(s)\|^2ds$ exhibit convergence rates of ${\cal O}(\frac{1}{t})$ as $t \rightarrow +\infty$.

If $M$ is only monotone and Lipschitz continuous, \cite{attouch-svaiter} suggests that incorporating a correction term into the formulation of the differential equation, such as the time derivative of $t \mapsto M(x(t))$ in \eqref{eq:deterministic-system}, is crucial to improve the convergence properties of the dynamical system associated with \eqref{eq:find-zeros}. We have extended this idea to the stochastic domain, resulting in the formulation of \eqref{eq:SDE-M}, and assuming that $M(X(\cdot))$ is an stochastic It\^o process.

The significance of the correction term in solving equations governed by monotone and $L$-Lipschitz continuous operators is well-reflected in the realm of numerical algorithms. On one hand, the forward method,
\begin{align*}
    x^{k+1} := x^k - \gamma M(x^k) \quad \forall k \geq 0,
\end{align*}
does not generally converge, as observed, for instance, with the counterclockwise rotation operator by $\frac{\pi}{2}$ radians in $\mathbb{R}^2$.

On the other hand, for the Optimistic Gradient Descent Ascent method (\cite{popov}, \cite{boehm-sedlmayer-csetnek-bot})
\begin{align*}
    x^{k+1} := x^k - 2\gamma M(x^k) + \gamma M(x^{k-1}) = x^k - \gamma M(x^k) - \gamma(M(x^k) - M(x^{k-1})) \quad \forall k \geq 1,
\end{align*}
which can be regarded as a natural discretization of \eqref{eq:deterministic-system}, if $0 < \gamma < \frac{1}{2L}$, then the generated sequence of iterates converges to a zero of $M$. Additionally, if $0 < \gamma < \frac{1}{16L}$, then the method exhibits a best-iterate convergence rate for the norm of the operator evaluated along the iterates of $\mathcal{O}\left(\frac{1}{\sqrt{k}}\right)$ as $k \rightarrow +\infty$ (\cite{chavdarova-jordan-zampetakis}, \cite{golowich-pattathil-daskalakis}, \cite{golowich-pattathil-daskalakis-ozdaglar}).

Furthermore, for the Extragradient method (\cite{antipin},~\cite{korpelevich})
\begin{align*}
\begin{split}
    \begin{cases}
        & y^k := x^k - \gamma M(x^k) \\
        & x^{k+1} := x^k - \gamma M(y^k)
    \end{cases}
\end{split}     \quad \forall k \geq 0,
\end{align*}
which also involves a correction term, if $0<\gamma<\frac{1}{L}$, then the generated sequence of iterates converges to a zero of $M$ and $\|M(x^k)\|$ converges to $0$ with a convergence rate of $\mathcal{O}\left(\frac{1}{\sqrt{k}}\right)$ as $k \rightarrow +\infty$ (\cite{gorbunov-loizou-gidel}).

For completeness, we would like to mention that recently, in \cite{bot-csetnek-nguyen}, a second-order dynamical system with momentum term and correction term associated with a monotone equation has been introduced. Alongside the convergence of the generated trajectory to a zero of the operator, the system exhibits convergence rates of $o\left(\frac{1}{t}\right)$ for the norm of the operator and the gap function along the trajectory as $t \rightarrow +\infty$. Furthermore, an explicit numerical algorithm replicating the convergence properties of the continuous time dynamics has been obtained. These are the best-known convergence rate results for continuous time and explicit discrete time approaches for monotone equations.

\subsection{Our contributions}\label{subsec12}

In Section~\ref{sec-cont-time-ex}, we discuss the existence and uniqueness of a trajectory solution for the continuous time system ~\eqref{eq:SDE-M}. The presence of the corrector term requires quite involved analysis. Under the assumption that $\sigma_\infty$ is square-integrable, we also prove the almost sure convergence of $X(t)$ to a zero of $M$ and of $\|M(X(t))\|$ to $0$ as $t \rightarrow +\infty$. 

In Section~\ref{sec-cont-time-rates}, in the framework of assumption \eqref{eq:assumption-sigma} for the diffusion term, we establish upper bounds in expectation for $\frac{1}{t} \int_0^t\|M(X(s))\|^2ds$ and $\frac{1}{t} \int_0^t \langle X(s)-x^*, M(X(s))\rangle ds$, where $x^*$ is a zero of $M$. These bounds are of the form ${\cal O}(\frac{1}{t\mu(t)})$ + ${\cal O}(\sigma^2_{*})$ and ${\cal O}(\frac{1}{t})$ + ${\cal O}(\sigma^2_{*})$, respectively. Additionally, assuming that $\sigma_\infty$ is square-integrable, we show that the squared norm of the operator $\|M(X(t))\|^2$ and the gap function $\langle X(t)-x^*, M(X(t))\rangle$ exhibit ergodic convergence rates in expectation of ${\cal O}(\frac{1}{t\mu(t)})$ and ${\cal O}(\frac{1}{t})$, respectively, as $t \rightarrow + \infty$.  In case $M$ is $\kappa$-strongly monotone with constant $\kappa >0$, we also derive upper bounds in expectation of ${\cal O}(e^{-2\kappa t}) + {\cal O}(\sigma^2_*)$ for $\|X(t)-x^*\|^2$, where $x^*$ is the unique zero of $M$. 

In Section~\ref{sec-disc-time}, we demonstrate how these results transfer from continuous time to discrete time settings within the context of the stochastic Optimistic Gradient Descent Ascent (OGDA) method and of the stochastic Extragradient (EG) method, designed to solve  \eqref{eq:find-zeros}, in the circumstances that the evaluation of $M$ is subject to noise.

\section{Continuous time system: existence, uniqueness and almost sure convergence}\label{sec-cont-time-ex}

This section is dedicated to the study of~\eqref{eq:SDE-M} in the setting 
\begin{align*}
\begin{split}
    \begin{cases} &d(X(t)+\mu(t)M(X(t))) = - (\gamma(t) - \dot{\mu}(t)) M(X(t)) dt + \sigma(t,X(t)) dW(t) \quad \forall t > 0,
    \\& X(0)=X_0.
    \end{cases}
\end{split}
\end{align*}
Throughout this section, we will assume that the parameter functions satisfy the following conditions:
\begin{align}\label{condmugamma}
 0 < \mu_\text{low} \leq \mu(t) \leq \mu_\text{up}:=\mu(0) \ \mbox{and} \ 0 < \gamma_\text{low} \leq \gamma(t) \leq \gamma_\text{up} \quad \forall t\geq 0, \ \mbox{and} \ \mu \ \mbox{is nonincreasing on} \ [0,+\infty).
\end{align}

Throughout the paper, $(\Omega,\mathcal{F},\{\mathcal{F}_t\}_{t\geq 0},\mathbb{P})$ is a filtered probability space. The expectation of a random variable $\xi : \Omega \rightarrow \R^n$ is denoted by $\mathbb{E}(\xi) :=\int_{\Omega} \xi(\omega) d\mathbb{P}(\omega)$. An event $E \in {\cal F}$ happens almost surely (a.s.) if $\mathbb{P}(E)=1$.

An $\R^n$-valued stochastic process is a function $X:\Omega \times \R_+ \rightarrow \R^n$. It is said to be continuous if $X(\omega, \cdot)$ is continuous on $\R_+$ for almost all $\omega \in \Omega$. For simplicity, we will denote $X(t):=X(\cdot,t)$. Two stochastic processes $X, Y:[0,T]\rightarrow \R^n$, for $T>0$, are said to be equivalent if $X(t)=Y(t)$ almost surely for all $ t\in[0,T]$. This allows the definition of the equivalence relation $\mathcal{R}$ which associates equivalent stochastic processes to the same class.

Next, we will introduce some notions that will characterize the space where the trajectory solution of \eqref{eq:SDE-M} lies.

\begin{defi}\label{def21}
(i) A stochastic process $X:\Omega\times\R_+ \rightarrow \R^n$ is called progressively measurable if for every $t\geq 0$, the mapping
        \begin{align*}
            &\Omega\times [0,t] \rightarrow \R^n, \quad (\omega,s) \mapsto X(\omega,s)
        \end{align*}
        is $\mathcal{F}_t\otimes\mathcal{B}([0,t])$-measurable, where $\otimes$ denotes the product $\sigma$-algebra and $\mathcal{B}$ is the Borel $\sigma$-algebra. Further, $X$ is called $\mathcal{F}_t$-adapted if $X(\cdot,t)$ is $\mathcal{F}_t$-measurable for every $t\geq 0$.
        \item For $T>0$, we define the quotient space as
        \begin{align*}
            S_n^0[0,T]:=\left\{X:\Omega\times[0,T]\rightarrow\R^n: \text{$X$ is a progressively measurable continuous stochastic process}\right\}\Big/\mathcal{R},
        \end{align*}
     and set $S_n^0:=\bigcap_{T\geq 0} S_n^0[0,T]$. 
     
(ii) For $\nu>0$ and $T >0$, we define $S_n^\nu [0,T]$ as the following subset of stochastic processes in $S_n^0 [0,T]$
        \begin{align*}
            S_n^\nu [0,T] := \left\{X\in S_n^0 [0,T]: \quad \mathbb{E}\left(\sup_{t\in[0,T]} \|X(t)\|^\nu\right) < +\infty \right\}.
        \end{align*}
        Finally, we set $S_n^\nu := \bigcap_{T\geq 0} S_n^\nu[0,T]$.

\end{defi}

The following results establishes the existence and uniqueness of a solution of~\eqref{eq:SDE-M}.

\begin{thm}\label{thm:existence-uniqueness}
    Let the diffusion term $\sigma : \R_+ \times \R^n \rightarrow \R^{n \times m}$ satisfy assumption ~\eqref{eq:assumption-sigma}. Then ~\eqref{eq:SDE-M} has a unique solution $X\in S_n^\nu$, where $\nu \geq 2$.
\end{thm}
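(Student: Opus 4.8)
The plan is to reformulate \eqref{eq:SDE-M} as a genuine (well-posed) system for the pair of processes $(X(t), M(X(t)))$ and then apply the classical existence-uniqueness theorem for SDEs with globally Lipschitz and linearly growing coefficients, via a fixed-point argument on the space $S_n^\nu[0,T]$. Set $Z(t) := X(t) + \mu(t) M(X(t))$, so that the given equation reads $dZ(t) = -(\gamma(t) - \dot\mu(t)) M(X(t))\,dt + \sigma(t, X(t))\,dW(t)$. The central observation is that the map $x \mapsto x + \mu(t) M(x)$ is, for each fixed $t$, a strongly monotone and Lipschitz continuous bijection of $\R^n$ (strong monotonicity with modulus $1$ because $M$ is monotone and $\mu(t) > 0$; Lipschitz with constant $1 + \mu_{\mathrm{up}} L$), hence it has a single-valued, globally Lipschitz inverse $R_t := (\Id + \mu(t) M)^{-1}$ with Lipschitz constant $1$ (this is just the resolvent/Minty-type argument; one should check that $R_t$ depends jointly measurably, indeed continuously, on $(t, z)$, using continuity of $\mu$). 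Then $X(t) = R_t(Z(t))$ and $M(X(t)) = \mu(t)^{-1}(Z(t) - R_t(Z(t)))$, so \eqref{eq:SDE-M} is equivalent to the autonomous-in-form SDE
\begin{align*}
    dZ(t) = b(t, Z(t))\,dt + \tilde\sigma(t, Z(t))\,dW(t), \qquad Z(0) = X_0 + \mu(0) M(X_0),
\end{align*}
where $b(t,z) := -\tfrac{\gamma(t) - \dot\mu(t)}{\mu(t)}\,(z - R_t(z))$ and $\tilde\sigma(t,z) := \sigma(t, R_t(z))$.

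Next I would verify that $b$ and $\tilde\sigma$ satisfy the standard hypotheses on $[0,T]$ for every $T > 0$. Both $z \mapsto z - R_t(z)$ and $z \mapsto R_t(z)$ are globally Lipschitz (constants $2$ and $1$ respectively), and the prefactor $\tfrac{\gamma(t) - \dot\mu(t)}{\mu(t)}$ is bounded on $[0,T]$ by \eqref{condmugamma} together with continuity of $\dot\mu$ and $\mu_{\mathrm{low}} > 0$; composing the $c_\sigma$-Lipschitz $\sigma(t,\cdot)$ with the $1$-Lipschitz $R_t$ gives that $\tilde\sigma(t,\cdot)$ is $c_\sigma$-Lipschitz, and $\|\tilde\sigma(t,z)\|_F \le \sigma_*$ uniformly by \eqref{eq:assumption-sigma}. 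Linear growth in $z$ follows from Lipschitz continuity plus boundedness of $t \mapsto R_t(0)$ and $t \mapsto \sigma(t, R_t(0))$ on $[0,T]$. Measurability/continuity in $t$ is inherited from that of $\mu, \dot\mu, \gamma, \sigma$ and the continuous dependence of $R_t$. With these in hand, the Itô existence-uniqueness theorem yields a unique (up to $\mathcal R$) continuous, progressively measurable solution $Z \in S_n^\nu[0,T]$ for each $T$, with $\mathbb{E}\big(\sup_{t \in [0,T]}\|Z(t)\|^\nu\big) < \infty$ for every $\nu \ge 2$ (the uniform bound $\sigma_* $ on the diffusion and the standard Burkholder--Davis--Gundy / Gronwall estimate deliver all finite moments). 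Transferring back, $X(t) = R_t(Z(t))$ is then continuous and progressively measurable, lies in $S_n^\nu[0,T]$ because $R_t$ is $1$-Lipschitz with $R_t(0)$ bounded, and $M(X(t)) = \mu(t)^{-1}(Z(t) - X(t))$ is likewise in $S_n^\nu[0,T]$; patching over $T \to +\infty$ (the solutions on $[0,T]$ and $[0,T']$ agree on the overlap by uniqueness) gives $X \in S_n^\nu$. Uniqueness for \eqref{eq:SDE-M} follows from uniqueness for $Z$ since the correspondence $X \leftrightarrow Z$ is bijective and bicontinuous.

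I expect the main obstacle to be the careful justification that \eqref{eq:SDE-M}, as written in terms of the two Itô processes $X(\cdot)$ and $M(X(\cdot))$ sharing the same Brownian motion, is genuinely equivalent to the single SDE for $Z$ — in particular, that any solution pair $(X, M\circ X)$ in the sense intended by the authors necessarily satisfies $X = R_\cdot(Z)$ with $Z$ solving the reduced SDE, and conversely that Itô's formula applied to $R_t(Z(t))$ reproduces the required decomposition of $dM(X(t))$ as an Itô process driven by $W$. Since $R_t$ is only Lipschitz and not $C^2$, one cannot apply the classical Itô formula directly to recover $d(M(X(t)))$; instead the cleaner route is to \emph{define} the solution of \eqref{eq:SDE-M} as the process $X$ arising from the reduced SDE and observe a posteriori that $M(X(\cdot))$ is an Itô process with diffusion coefficient $\sigma_M(t) = \mu(t)^{-1}(\tilde\sigma(t,Z(t)) - dR_t(Z(t))[\tilde\sigma(t,Z(t))])$ wherever $R_t$ is differentiable — or, more robustly, to work throughout with the integrated (mild) form $Z(t) = Z(0) + \int_0^t b\,ds + \int_0^t \tilde\sigma\,dW$ and never differentiate $R$ at all. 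The remaining steps (Lipschitz/growth bounds, moment estimates, patching) are routine.
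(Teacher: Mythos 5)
Your proposal is correct and follows essentially the same route as the paper: the substitution $Z(t) = X(t) + \mu(t)M(X(t))$, the resolvent $R_t = (\Id + \mu(t)M)^{-1}$ (the paper's $J_{\mu(t)M}$) together with the Yosida-type identity $M(X(t)) = \mu(t)^{-1}(Z(t) - R_t(Z(t)))$, verification of global Lipschitz bounds for the reduced drift and diffusion on $[0,T]$, application of the classical It\^{o} existence--uniqueness theorem, transfer back via $X = R_\cdot(Z)$ with the continuity, moment and progressive-measurability checks, and patching over $T$. Your worry about differentiating $R_t$ is resolved exactly as you suggest and as the paper does: the solution notion is phrased through the It\^{o} process $X + \mu M(X) = Z$ in integrated form, so no It\^{o} formula for $R_t$ is ever needed.
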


\begin{proof} Let $\nu \geq 2$. First, we establish the result on $[0,T]$, for $T>0$ fixed.  We rewrite the stochastic differential equations as
    \begin{align*}
        X(t) = (\Id + \mu(t)M)^{-1}(\underbrace{X(t) + \mu(t)M(X(t))}_{=:Z(t)}) =: J_{\mu(t)M}(Z(t)) \quad \forall t\in [0,T],
    \end{align*}
    where $J_{\mu(t)M} := (\Id + \mu(t)M)^{-1} : \R^n \rightarrow \R^n$ denotes the resolvent of $M$ with parameter $\mu(t)$ and $\Id$ denotes the identity operator on $\R^n$. Denoting the Yosida approximation of $M$ with parameter $\mu(t)$ by
    \begin{align*}
       M_{\mu(t)}:= \frac{1}{\mu(t)}(\Id-J_{\mu(t)M}),
    \end{align*}   
    a simple calculation yields that $M(X(t)) = M_{\mu(t)}(Z(t))$ for all $t \in [0,T]$. Hence,
    \begin{align}\label{eq:existence-uniqueness-minty}
        dZ(t) = d(X(t) + \mu(t)M(X(t))) = (-\gamma(t) + \dot{\mu}(t)) M_{\mu(t)}(Z(t)) dt + \sigma(t,J_{\mu(t)M}(Z(t))) dW(t) \quad \forall t\in [0,T].
    \end{align}
    Now setting
    \begin{align*}
        F(t,z) := (\dot{\mu}(t) - \gamma(t)) M_{\mu(t)}(z),\quad G(t,z) := \sigma(t,J_{\mu(t)M}(z)) \quad \forall z \in \R^n \ \forall t \in [0,T],
    \end{align*}
    yields for all $x,y \in \R^n$ and all $t \in [0,T]$
    \begin{align*}
        & \|F(t,x)-F(t,y)\| + \|G(t,x) - G(t,y)\|_F \\
        = & \ |\gamma(t)-\dot{\mu}(t)|\|M_{\mu(t)}(x) - M_{\mu(t)}(y)\| + \|\sigma(t,J_{\mu(t)M}(x)) - \sigma(t,J_{\mu(t)M}(y))\|_F\\
        \leq & \ (\gamma_\text{up} - \dot{\mu}_{\text{low},T}) \frac{1}{\mu(t)}\|x-y\| + c_\sigma\|J_{\mu(t)M}(x) - J_{\mu(t)M}(y)\|\\
        \leq & \ \left((\gamma_\text{up} - \dot{\mu}_{\text{low},T})\frac{1}{\mu_\text{low}}+c_\sigma\right)\|x-y\|,
    \end{align*}
where $\dot{\mu}_{\text{low},T}:=\inf\{\dot{\mu}(t) : t \in [0,T]\} \in \R$, since the  continuous mapping $\dot{\mu}$ is bounded on the compact interval $[0,T]$. 

This allows us to use Theorem~\ref{thm:help-existence-uniqueness} to obtain the existence of a unique stochastic process $Z_T\in S_n^\mu[0,T]$ solving~\eqref{eq:existence-uniqueness-minty}. Therefore, a reasonable candidate to solve~\eqref{eq:SDE-M}, given by
    \begin{align*}
        X_T(t) := J_{\mu(t)M}(Z_T(t)) \quad \forall t \in [0,T],
    \end{align*}
    is uniquely defined. It remains to show that it does indeed solve~\eqref{eq:SDE-M} and lies in $S_n^\nu[0,T]$. 
    
We define
    \begin{align*}
        Z_T(t) := X_T(t) + \mu(t)M(X_T(t)) \quad \forall t \in [0,T].
    \end{align*}
    Since now $M(X_T(t)) = M_{\mu(t)}(Z_T(t))$ and $Z_T$ is a solution of~\eqref{eq:existence-uniqueness-minty}, it follows
    \begin{align*}
        d(X_T(t)+\mu(t)M(X_T(t))) &= (-\gamma(t) + \dot{\mu}(t))M_{\mu(t)}(Z_T(t)) dt + \sigma(t,J_{\mu(t)M}(Z_T(t))) dW(t)
        \\&= (-\gamma(t) + \dot{\mu}(t))M(X_T(t)) dt + \sigma(t,X_T(t)) dW(t) \quad \forall t\in[0,T],
    \end{align*}
    and thus, $X_T$ is indeed a solution of~\eqref{eq:SDE-M} on $[0,T]$.

    Now, recall that $J_{\mu(t)M}$ is $1$-Lipschitz continuous and observe that for all $x\in\R^n$ and all $\mu,\lambda>0$ it holds
    \begin{align*}
        \|J_{\lambda M}(x) - J_{\mu M}(x)\| &= \left\| J_{\mu M}\left(\frac{\mu}{\lambda}x+\left(1-\frac{\mu}{\lambda}\right)J_{\lambda M}(x)\right)-J_{\mu M}(x)\right\|
        \\&\leq \left\|\left(1-\frac{\mu}{\lambda}\right)(x-J_{\lambda M}(x))\right\|
        \\&\leq |\lambda - \mu| \|M_\lambda x\|.
    \end{align*}
This will allow us to establish the continuity of $X_T$. Indeed, for all $t,s \in [0,T]$ it holds
    \begin{align*}
        \|X_T(t)-X_T(s)\| &\leq \|J_{\mu(t)M}(Z_T(t)) - J_{\mu(s)M}(Z_T(t))\| + \|J_{\mu(s)M}(Z_T(t)) - J_{\mu(s)M}(Z_T(s))\|
        \\&\leq |\mu(t)-\mu(s)| \|M_{\mu(t)}(Z_T(t))\| + \|Z_T(t)-Z_T(s)\|,
    \end{align*}
    and, since $Z_T$ is continuous by Theorem~\ref{thm:help-existence-uniqueness} and $\mu$ is continuous by assumption, the continuity of $X_T$ follows.

Next, consider
    \begin{align*}
        \mathbb{E}\left(\sup_{t\in[0,T]} \|X_T(t)\|^\nu\right) &= \mathbb{E}\left(\sup_{t\in[0,T]} \|J_{\mu(t)M}(Z_T(t)) - 0\|^\nu\right)
        \\&= \mathbb{E}\left(\sup_{t\in[0,T]} \|J_{\mu(t)M}(Z_T(t)) - J_{\mu(t)M}(\mu(t)M(0))\|^\nu \right)
        \\&\leq \mathbb{E}\left(\sup_{t\in[0,T]} \|Z_T(t) - \mu(t)M(0)\|^\nu\right)
        \\&\leq \mathbb{E}\left(2^\nu \sup_{t\in[0,T]} \left(\frac{1}{2}\|Z_T(t)\| + \frac{\mu_\text{up}}{2}\|M(0)\|\right)^\nu\right)
        \\&\leq 2^{\nu-1}\left(\mathbb{E}\left(\sup_{t\in[0,T]}\|Z_T(t)\|^\nu\right) + \mu_\text{up}^\nu \|M(0)\|^\nu\right)<+\infty.
    \end{align*}

Finally, in order to prove the progressive measurability of $X$, consider the mapping
    \begin{align*}
        \Omega \times [0,t] \rightarrow [0,t]\times \R^n, \quad (\omega,s) \mapsto (s,Z_T(\omega,s)).
    \end{align*}
    Now, the preimage of an element of the generating set $I\times O$, where $I\subseteq [0,t]$ is an interval and $O\subseteq \R^n$ an element of the generator of the Borel-$\sigma$-algebra on $\R^n$, is given by
    \begin{align*}
        Z_T^{-1}(I\times O) \cap I\times \Omega,
    \end{align*}
    which is measurable, due to the progressive measurability of $Z_T$ and the fact that a finite intersection of measurable sets is again measurable. This proves the progressive measurability of $X_T$.

    Since $X_T$ is uniquely defined on any $[0,T]$, where $T>0$, for $0<T_1<T_2$, we have $X_{T_2}\vert_{[0,T_1]} = X_{T_1}$. Thus, there exists a unique solution $X\in S_n^\nu$ of~\eqref{eq:SDE-M}.
\end{proof}

The following result establishes the almost sure convergence of $X(t)$ to a $\zer M$-valued random variable as $t \rightarrow +\infty$. The notion of a martingale will play an important role in its proof.

\begin{defi}\label{def22}
    Let $X$ be a real-valued stochastic process such that $\mathbb{E}(|X(t)|)<+\infty$ for all $t\geq 0$.
    \begin{enumerate}[(i)]
        \item The $\sigma$-algebra generated by the random variables $X(s)$ for $0\leq s \leq t$,
        \begin{align*}
            \mathcal{U}(t) := \mathcal{U}(X(s)\vert 0\leq s \leq t),
        \end{align*}
        is called the \emph{history} of the stochastic process $X$ until (and including) time $t$.
        \item If
        \begin{align*}
            X(s) = \mathbb{E}(X(t) \vert \mathcal{U}(s)) \quad \text{a.s.} \quad \forall t \geq s \geq 0,
        \end{align*}
        then $X$ is called a \emph{martingale}.
    \end{enumerate}
\end{defi}
\begin{thm}\label{thm:convergence-tdp}
Let $\zer M \neq \emptyset$, the diffusion term $\sigma : \R_+ \times \R^n \rightarrow \R^{n \times m}$ satisfy assumption ~\eqref{eq:assumption-sigma}, and $X$ be the unique trajectory process of ~\eqref{eq:SDE-M}. If, in addition to \eqref{condmugamma}, $\int_0^{+\infty}\sigma_\infty(s)^2 ds < +\infty$, then the following statements are true:
    \begin{enumerate}
        \item[(i)] $\sup_{t \geq 0}{\mathbb{E}(\|X(t)\|^2)}<+\infty$;
        \item[(ii)]  it holds $\sup_{t\geq 0}{\|X(t)\|} < +\infty$ a.s.
            \end{enumerate}
If, in addition, $-\infty < \dot \mu_\text{up} \leq \dot \mu(t)$ for all $t \geq 0$ and $L \mu_\text{up}<1$, then:          
         \begin{enumerate}
        \item[(iii)]\label{en-item:limit-exists-tdp} almost surely $\lim_{t\rightarrow +\infty}{\|M(X(t))\|}=0$ and $\lim_{t\rightarrow +\infty} \|X(t)-x^*\|$ exists and is finite for all $x^* \in \zer M$;
        \item[(iv)] there exists a $\zer M$-valued random variable $x^*$ such that $\lim_{t\rightarrow +\infty}X(t)=x^*$ a.s.
    \end{enumerate}
\end{thm}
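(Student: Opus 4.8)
The plan is to run a stochastic Lyapunov / martingale analysis built on the energy functional
\begin{align*}
E_{x^*}(t) := \tfrac12\|X(t)-x^*\|^2 + \mu(t)\langle M(X(t)), X(t)-x^*\rangle + \tfrac{c}{2}\mu(t)^2\|M(X(t))\|^2,
\end{align*}
for a suitable constant $c>0$, with $x^*\in\zer M$ fixed. The motivation is that, on the deterministic side, $\frac{d}{dt}\big(\tfrac12\|x(t)-x^*\|^2 + \mu(t)\langle M(x(t)), x(t)-x^*\rangle\big)$ is essentially $-\gamma(t)\langle M(x(t)),x(t)-x^*\rangle$ up to a term controlled by $\|M(x(t))\|^2$ (using monotonicity and $L$-Lipschitzness of $M$), and the extra $\mu(t)^2\|M(X(t))\|^2$ term, together with the hypothesis $L\mu_{\mathrm{up}}<1$, is what makes the whole functional dissipative. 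The condition $\dot\mu_{\mathrm{up}}\le\dot\mu(t)$ bounds the unfavorable sign contributions from differentiating $\mu$ and $\mu^2$. First I would use the system form with $dM(X(t)) = Y(t)dt + \sigma_M(t)dW(t)$ and $dX(t) = (-\mu(t)Y(t)-\gamma(t)M(X(t)))dt + \sigma_X(t)dW(t)$, apply It\^o's formula to $E_{x^*}$, and collect the drift into (a) a strictly negative part $-\gamma(t)\langle M(X(t)),X(t)-x^*\rangle - \delta\mu(t)^2\|M(X(t))\|^2$ for some $\delta>0$ (this is where $L\mu_{\mathrm{up}}<1$ and the algebra of the cross terms enter, analogous to the discrete OGDA/EG Lyapunov arguments of \cite{gorbunov-loizou-gidel}), plus (b) a nonnegative part coming from the It\^o correction $\tfrac12\mathrm{tr}(\ldots)$, which is bounded by $C\sigma_\infty(t)^2$ using \eqref{eq:assumption-sigma} and the already-established bound $\sup_t\mathbb{E}\|X(t)\|^2<\infty$ from (i), plus (c) a martingale part whose quadratic variation integrates thanks to $\int_0^\infty\sigma_\infty(s)^2ds<\infty$ and (ii).

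Next I would invoke the Robbins--Siegmund type supermartingale convergence theorem: since $E_{x^*}(t)\ge 0$ (for $c$ large enough relative to $L$ and $\mu_{\mathrm{up}}$, the quadratic form $\tfrac12\|u\|^2+\mu\langle v,u\rangle+\tfrac{c}{2}\mu^2\|v\|^2$ in $u=X-x^*$, $v=M(X)$ is nonnegative — indeed positive definite — by the discriminant condition $\mu^2 < c\mu^2$, i.e. $c>1$, together with using monotonicity to replace $\langle M(X),X-x^*\rangle\ge0$ so the functional is in fact $\ge\tfrac12\|X(t)-x^*\|^2\ge0$), its drift is $\le C\sigma_\infty(t)^2 \in L^1$, and the martingale part is an $L^2$-bounded local martingale hence convergent a.s., we conclude that $E_{x^*}(t)$ converges a.s. to a finite limit and that $\int_0^\infty\big(\gamma(s)\langle M(X(s)),X(s)-x^*\rangle + \delta\mu(s)^2\|M(X(s))\|^2\big)ds<\infty$ a.s. Because $\gamma_{\mathrm{low}},\mu_{\mathrm{low}}>0$, this gives $\int_0^\infty\|M(X(s))\|^2ds<\infty$ and $\int_0^\infty\langle M(X(s)),X(s)-x^*\rangle ds<\infty$ a.s. Since (using monotonicity) $\langle M(X(t)),X(t)-x^*\rangle\ge0$ and $E_{x^*}(t)$ converges, and the $\mu\langle M(X),X-x^*\rangle$ and $\mu^2\|M(X)\|^2$ pieces are nonnegative, a bit of care shows $\tfrac12\|X(t)-x^*\|^2$ itself converges once we know $\|M(X(t))\|\to0$; so the two halves of (iii) must be proved in tandem.

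For $\|M(X(t))\|\to0$: from $\int_0^\infty\|M(X(s))\|^2ds<\infty$ a.s. it suffices to show $t\mapsto\|M(X(t))\|^2$ cannot fail to go to $0$, for which I would derive an It\^o estimate for $d\|M(X(t))\|^2 = (2\langle M(X(t)),Y(t)\rangle + \|\sigma_M(t)\|_F^2)dt + 2\langle M(X(t)),\sigma_M(t)dW(t)\rangle$ and argue that $\|M(X(t))\|^2$ is, up to an a.s.-convergent continuous process, nonincreasing or at least that its positive variation integrates — again this is where the correction term pays off, exactly as in the deterministic Attouch--Svaiter argument where $t\mapsto\|M(x(t))\|$ is shown to be nonincreasing. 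Combining square-integrability with this quasi-monotonicity forces $\|M(X(t))\|\to0$ a.s. Then, for (iv): by (iii), for each fixed $x^*\in\zer M$ the limit $\ell(x^*):=\lim_t\|X(t)-x^*\|$ exists a.s.; by (ii) the trajectory is a.s. bounded, so along a subsequence $t_k\to\infty$ it has a cluster point $\bar x$, and since $\|M(X(t_k))\|\to0$ and $M$ is continuous, $M(\bar x)=0$, i.e. $\bar x\in\zer M$; applying the existence of $\ell(\bar x)$ pins the whole trajectory to converge to $\bar x$ (the standard Opial-type argument, taking a countable dense subset of $\zer M$ so the exceptional null sets combine).

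\textbf{Main obstacle.} The crux is step (a): verifying that the It\^o drift of the (correctly weighted) Lyapunov functional is $\le -\delta\mu(t)^2\|M(X(t))\|^2 - \gamma(t)\langle M(X(t)),X(t)-x^*\rangle + C\sigma_\infty(t)^2$ with $\delta>0$. This requires carefully expanding the cross term $\frac{d}{dt}\big(\mu(t)\langle M(X(t)),X(t)-x^*\rangle\big)$, whose drift contains $\dot\mu(t)\langle M(X(t)),X(t)-x^*\rangle$, $\mu(t)\langle Y(t),X(t)-x^*\rangle$ and $\mu(t)\langle M(X(t)),-\mu(t)Y(t)-\gamma(t)M(X(t))\rangle$, the awkward $Y(t)$-terms (the drift of $M(X(t))$, which is not sign-definite) needing to be absorbed into the $\mu^2\|M(X)\|^2$ term via Young's inequality and the structural bound coming from $L$-Lipschitzness together with the resolvent identity $M(X(t))=M_{\mu(t)}(Z(t))$; the constant $\delta$ ends up positive precisely under $L\mu_{\mathrm{up}}<1$ and for an appropriate choice of the weight $c$. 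Once this single inequality is in hand, the rest is the Robbins--Siegmund machinery plus the Opial closedness argument, both routine in this literature.
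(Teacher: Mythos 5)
Your overall architecture (anchor/Lyapunov functional, supermartingale convergence, countable-dense-subset Opial argument) matches the paper, but the two steps you yourself flag as the crux are left unresolved, and as sketched they would not go through. First, the drift inequality: you propose the weight $c>1$ on the $\mu(t)^2\|M(X(t))\|^2$ term and plan to absorb the $Y(t)$-contributions (the drift of $M(X(\cdot))$) "via Young's inequality." But $Y(t)$ carries no a priori bound whatsoever — it is not controlled by $\|M(X(t))\|$, $\|X(t)\|$, or the Lipschitz constant — so Young's inequality cannot absorb a term like $(c-1)\mu(t)^2\langle M(X(t)),Y(t)\rangle$, which is exactly what survives when $c\neq 1$. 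The paper's choice is $c=1$, i.e. the functional $\phi(t,X,M(X))=\tfrac12\|X+\mu(t)M(X)-x^*\|^2$, a perfect square in the very variable $Z=X+\mu M(X)$ that \eqref{eq:SDE-M} drives; with this choice the $Y(t)$-terms from $\nabla_x\phi\cdot dX$ and $\nabla_z\phi\cdot dM(X)$ cancel identically, the drift becomes $-(\gamma(t)-\dot\mu(t))\left(\langle M(X),X-x^*\rangle+\mu(t)\|M(X)\|^2\right)+\tfrac12\mathrm{tr}(\Sigma)$, and neither $L\mu_{\text{up}}<1$ nor any Young step is needed for parts (i)--(ii); also the Itô correction is bounded by $\tfrac12\sigma_\infty^2(t)$ directly from \eqref{eq:assumption-sigma}, not via (i).

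Second, $\|M(X(t))\|\to 0$: your plan to mimic Attouch--Svaiter by showing quasi-monotonicity of $\|M(X(t))\|^2$ through Itô's formula fails for the same reason — the drift of $\|M(X(t))\|^2$ is $2\langle M(X(t)),Y(t)\rangle+\|\sigma_M(t)\|_F^2$, and the deterministic mechanism ($\langle\frac{d}{dt}M(x(t)),\dot x(t)\rangle\ge 0$ from monotonicity) has no Itô analogue here because $\dot X$ does not exist and $Y$ is unsigned. The paper instead argues by contradiction: if $\limsup\|M(X(\omega_0,t))\|>\delta>0$ while $\int_0^{+\infty}\|M(X(\omega_0,s))\|^2ds<+\infty$, one picks well-separated times $t_k$ with $\|M(X(t_k))\|>\delta$, bounds $\|X(t)-X(t_k)\|^2$ on short intervals from the integrated SDE — using the a.s. convergence of the stochastic integral $R(t)=\int_0^t\sigma\,dW$ (Doob, square-integrability of $\sigma_\infty$) to make its increments uniformly small, and the bounds $\dot\mu_{\text{up}}\le\dot\mu\le 0$, $\gamma\le\gamma_{\text{up}}$ on the drift — and then uses $L$-Lipschitzness to pass to $\|M(X(t))-M(X(t_k))\|$; it is exactly here that $L\mu_{\text{up}}<1$ is needed, to absorb the $(1+\alpha)L^2\mu_{\text{up}}^2\|M(X(t))-M(X(t_k))\|^2$ term, yielding a uniform lower bound on $\|M(X(t))\|^2$ over each interval and contradicting integrability. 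Without an argument of this type (or a genuine substitute), your proof of (iii) — and hence of (iv), which depends on it — is incomplete.
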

\begin{proof}
In order to perform the proof, we rewrite \eqref{eq:SDE-M} as
\begin{align*}
\begin{cases}
    &dM(X(t)) = Y(t) dt + \sigma_M(t) d{W}(t),
    \\& dX(t) = (-\mu(t)Y(t) - \gamma(t)M(X(t))) dt + \sigma_X(t) d{W}(t) \quad \forall t > 0,
    \\& X(0)=X_0,
\end{cases}
\end{align*}
where $\sigma(t,X(t)) = \sigma_X(t) + \mu(t)\sigma_M(t)$ for all $t \geq 0$. Let $x^*\in\zer M$, and define the anchor function $\phi : \R_+ \times \R^n \times \R^n \rightarrow \R$,
    \begin{align*}
        \phi(t,x,z) := \mu(t) \langle z, x-x^*\rangle+ \frac{1}{2}\|x-x^*\|^2 + \frac{\mu(t)^2}{2}\|z\|^2 = \frac{1}{2}\|x+\mu(t)z-x^*\|^2.
    \end{align*}
The proposed anchor function takes an additional argument -- this will be taken equal to $M(X(t))$ over the course of the proof. For all $(t,x,z) \in \R_+ \times \R^n \times \R^n$ it holds
    \begin{align*}
        &\frac{d}{dt}\phi(t,x,z) = \dot{\mu}(t)\langle z, x-x^*\rangle + \mu(t)\dot{\mu}(t)\|z\|^2 
        \\& \nabla_x \phi(t,x,z) = \mu(t)z + x - x^*
        \\& \nabla_z \phi(t,x,z) = \mu(t)(x-x^*) + \mu(t)^2 z
        \\& \nabla_x^2 \phi(t,x,z) = I
        \\& \nabla_x\nabla_z \phi(t,x,z) = \nabla_z\nabla_x \phi(t,x,z) = \mu(t)I
        \\& \nabla_z^2 \phi(t,x,z) = \mu(t)^2 I.
    \end{align*}
Observe that the remaining second partial derivatives of $\phi$ are identically $0$. Hence, the It\^{o} formula yields for all $t\geq 0$
    \begin{align*}
        \phi(t,X(t),M(X(t))) &= \phi(0,X(0),M(X(0)))
        \\&\qquad+ \int_0^t \left(\dot{\mu}(s)\langle M(X(s)), X(s)-x^*\rangle + \mu(s)\dot{\mu}(s)\|M(X(s))\|^2\right) ds
        \\&\qquad+ \int_0^t \big\langle (\mu(s)M(X(s)) + X(s) - x^*), \left( -\mu(s)Y(s) - \gamma(s) M(X(s)) \right) \big\rangle ds
        \\&\qquad+ \int_0^t \left\langle \sigma_X^\top(s)(\mu(s)M(X(s)) + X(s) - x^*), dW(s) \right\rangle
        \\&\qquad+ \int_0^t \left\langle (\mu(s)(X(s)-x^*)+\mu(s)^2M(X(s))),Y(s) \right\rangle ds
        \\&\qquad+ \int_0^t \left\langle \sigma_M^\top(s) (\mu(s)(X(s)-x^*) + \mu(s)^2 M(X(s))), dW(s) \right\rangle
        \\&\qquad+ \frac{1}{2}\int_0^t \text{tr}(\sigma_X^\top(s) \sigma_X(s)) + 2\mu(s)\text{tr}(\sigma_M^\top(s) \sigma_X(s)) + \mu(s)^2\text{tr}(\sigma_M^\top(s) \sigma_M(s)) ds.
    \end{align*}
We observe for all $s\geq 0$
    \begin{align*}
    \sigma_X^\top(s) \sigma_X(s) + 2\mu(s)\sigma_M^\top(s) \sigma_X(s) + \mu(s)^2(\sigma_M^\top(s) \sigma_M(s)) = \sigma^\top(s,X(s))\sigma(s,X(s)),
    \end{align*}
    and denote this term by $\Sigma(s,X(s))$ for improved readability. Further, it holds for all $s \geq 0$
    \begin{align*}
       & \ \langle \sigma_X^\top(s)(\mu(s)M(X(s)) + X(s)-x^*) + \sigma_M^\top(s)(\mu(s)^2 M(X(s)) + \mu(s)(X(s)-x^*)), d{W}(s)\rangle
        \\
        = & \ \langle (\sigma_X(s) + \mu(s)\sigma_M(s))^\top(\mu(s)M(X(s)) + X(s)-x^*), d{W}(s)\rangle
        \\
        = & \ \langle \sigma^\top(s,X(s))(\mu(s)M(X(s)) + X(s)-x^*), d{W}(s)\rangle.
    \end{align*}
    Then, performing some elementary computations, we get for all $t\geq 0$
    \begin{align}\label{eq:ito-formula-anchor-function}
    \begin{split}
        \phi(t,X(t),M(X(t))) &= \underbrace{\mu(0)\langle M(X_0), X_0-x^*\rangle + \frac{1}{2}\|X_0-x^*\|^2 + \frac{\mu(0)^2}{2}\|M(X_0)\|^2}_{=:\xi}
        \\&\quad - \underbrace{\int_0^t (\gamma(s) - \dot{\mu}(s))\left(\langle M(X(s)), X(s)-x^*\rangle +\mu(s)\|M(X(s))\|^2\right)  ds}_{=:U(t)}
        \\&\quad +\underbrace{\int_0^t \langle \sigma^\top(s,X(s))(\mu(s)M(X(s)) + X(s)-x^*), d{W}(s)\rangle}_{=:N(t)}
        \\&\quad+ \underbrace{\frac{1}{2}\int_0^t \text{tr}(\Sigma(s,X(s))) ds}_{=:A(t)},
    \end{split}
    \end{align}
    in other words
    \begin{align*}
        \phi(t,X(t),M(X(t))) = \xi - U(t) + N(t) + A(t) \quad \forall t\geq 0.
    \end{align*}

(i) As $X\in S_n^2$, it holds for all $T>0$ that
        \begin{align*}
            & \ \mathbb{E}\left(\int_0^T{\|\sigma^\top(s,X(s))(X(s)-x^* + \mu(s) M(X(s)))\|^2 ds}\right) \\ 
            \leq & \ \mathbb{E}\Bigg(\int_0^T 2(\|\sigma^\top(s,X(s))(X(s)-x^*)\|^2 + \mu_\text{up}^2\|\sigma^\top(s,X(s))(M(X(s))-M(x^*))\|^2) ds\Bigg)\\
            \leq & \ \mathbb{E}\Bigg(2\sup_{t\in[0,T]}{\|X(t)-x^*\|^2}\int_0^T{\sigma_\infty^2(s)  ds} + \mu_\text{up}^2 \sup_{t\in[0,T]}{\|M(X(t)) - M(x^*)\|^2}\int_0^T{\sigma_\infty^2(s) ds} \Bigg)\\
             \leq & \ 2(1+\mu_\text{up}^2L^2)\mathbb{E}\left( \sup_{t\in[0,T]}{\|X(t)-x^*\|^2}\right)\int_0^T{\sigma_\infty^2(s) ds}<+\infty.
        \end{align*}
By Proposition~\ref{prop:ito-formula}, this means that $N$ is a square-integrable continuous martingale, and $\mathbb{E}(N(t))=0$ for all $t \geq 0$. Taking the expectation of~\eqref{eq:ito-formula-anchor-function} and using that
        \begin{align*}
            \begin{split}
                & 0 \leq \text{tr}(\Sigma(s,X(s))\leq\sigma_\infty^2(s), \quad
                \langle M(X(s)), X(s)-x^*\rangle \geq 0, \quad \mbox{and} \quad  \|M(X(s))\|^2\geq 0 \quad \forall s \geq 0,
            \end{split}
        \end{align*}
yields for all $t \geq 0$
        \begin{align*}
            \mathbb{E}\left(\frac{1}{2}\|X(t)-x^*\|^2\right) \leq & \ \mathbb{E}(\phi(t,X(t)))\\
            = & \ \frac{1}{2}\|X_0-x^*\|^2 + \mu_\text{up}\langle M(X_0), X_0-x^*\rangle + \frac{\mu_\text{up}^2}{2}\|M(X_0)\|^2\\
            & \ - \underbrace{\mathbb{E}(U(t))}_{\geq 0} + \underbrace{\mathbb{E}(N(t))}_{=0} + \mathbb{E}(A(t))\\
            \leq & \ \frac{1}{2}\|X_0-x^*\|^2 +\mu(0) \langle M(X_0), X_0-x^*\rangle + \frac{\mu(0)^2}{2}\|M(X_0)\|^2 + \frac{1}{2}\int_0^{t}{\sigma_{\infty}^2(s) ds}<+\infty.
        \end{align*}
Finally, by taking the supremum over $t\geq 0$, we establish the validity of the first claim.

(ii) $A$ and $U$ are continuous adapted increasing processes with $A(0)=U(0)=0$ a.s., while $\lim_{t\rightarrow +\infty} A(t)$ $< +\infty$ holds due to the integrability condition for the diffusion term. Thus, we can use Theorem~\ref{thm:A.9-in-paper} to obtain almost surely that $\lim_{t \rightarrow +\infty} \phi(t,X(t),M(X(t)))$ exists and is finite, and
        \begin{align}\label{eq:defined-integrals-squared-norms-inner-product-tdp}
        \begin{split}
          \int_0^{+\infty} (\gamma(s) - \dot{\mu}(s))\left(\langle M(X(s)), X(s)-x^*\rangle +\mu(s)\|M(X(s))\|^2\right)  ds <+\infty.
        \end{split}
        \end{align}

In other words, for all $x^* \in \zer M$ there exists $\Omega_{x^*}\in\mathcal{F}$ such that $\mathbb{P}(\Omega_{x^*})=1$ and for all $\omega \in \Omega_{x^*}$ it holds that
        \begin{align}\label{eq:for-separability-argument-tdp}
 \lim_{t\rightarrow +\infty} \mu(t)\langle M(X(\omega, t)), X(\omega, t)-x^*\rangle + \frac{1}{2} \|X(\omega, t)-x^*\|^2 + \frac{\mu(t)^2}{2}\|M(X(\omega, t))\|^2 \quad \text{exists and is finite}.
        \end{align}
This implies that for all $x^* \in \zer M$ there exists $\Omega_{x^*}\in\mathcal{F}$ such that $\mathbb{P}(\Omega_{x^*})=1$ and $\|X(\omega, \cdot)-x^*\|$ remains bounded for all $\omega \in \Omega_{x^*}$. This proves that $\sup_{t\geq 0}{\|X(t)\|} < +\infty$ a.s.
       
(iii) Using the monotonicity of $M$ and the fact that $\mu$ and $\gamma$ are bounded from below by a positive constant, by~\eqref{eq:defined-integrals-squared-norms-inner-product-tdp}, it follows $\int_0^{+\infty}\|M(X(\omega,s))\|^2 ds < +\infty$ a.s. We denote by $\Omega_I$ the set of probability one for which it holds $\int_0^{+\infty}\|M(X(\omega,s))\|^2 ds < +\infty$ for all $\omega\in\Omega_I$.

 Let $R(t)=\int_0^t \sigma(s,X(s)) dW(s)$. This is a continuous martingale with respect to $\mathcal{F}_t$, which, according to the It\^o isometry property, satisfies
        \begin{align*}
            \mathbb{E}(\|R(t)\|^2) = \mathbb{E}\left(\int_0^t\|\sigma(s,X(s))\|_F^2 ds\right) \leq \mathbb{E}\left(\int_0^\infty \sigma_{\infty}^2(s) ds\right) < +\infty \quad \forall t\geq0.
        \end{align*}
By Theorem~\ref{thm:cont-martingale}, there exists a $\R^n$-valued random variable $R_\infty$ with respect to $\mathcal{F}_\infty$, the $\sigma$-algebra generated by $\cup_{t \geq 0} {\cal F}_t$, which satisfies $\mathbb{E}(\|R_\infty\|^2)<+\infty$, and
        \begin{align*}
            \lim_{t\rightarrow +\infty} R(\omega, t) = R_\infty(\omega) \quad \text{for every} \quad \omega\in\Omega_R,
        \end{align*}
where $\Omega_R\in\mathcal{F}$ is such that $\mathbb{P}(\Omega_R)=1$.

Then $\mathbb{P}(\Omega_I \cap \Omega_R)=1$ and, as we will see below, $\lim_{t\rightarrow +\infty}{\|M(X(\omega,t))\|}=0$ for all $\omega \in \Omega_I \cap \Omega_R$. Indeed, for all $\omega \in \Omega_I \cap \Omega_R$, since $\int_0^{+\infty}\|M(X(\omega, s))\|^2 ds < +\infty$, it holds $\liminf_{t\rightarrow +\infty}{\|M(X(\omega,t))\|}=0$. In addition, for all $\omega \in \Omega_I \cap \Omega_R$, $\limsup_{t\rightarrow +\infty}{\|M(X(\omega,t))\|}=0$. 

Suppose by contradiction that there exists $\omega_0 \in \Omega_I \cap \Omega_R$ such that $\limsup_{t\rightarrow +\infty}{\|M(X(\omega_0,t))\|}>0$.     Then, by Lemma~\ref{lem:limsup-liminf}, there exist $\delta>0$ satisfying
        \begin{align*}
            0=\liminf_{t\rightarrow +\infty}\|M(X(\omega_0,t))\|<\delta<\limsup_{t\rightarrow +\infty}{\|M(X(\omega_0, t))\|},
        \end{align*}
        and a sequence $(t_k)_{k\geq 0}\subseteq\R_+$ such that 
        \begin{align*}
           \lim_{k\rightarrow\infty} t_k=+\infty, \quad \|M(X(\omega, t_k))\|>\delta, \quad \text{and} \quad t_{k+1}-t_k>1 \quad \forall k \geq 0.
        \end{align*}

Let $\alpha >0$ such that $1 - (1+\alpha)L^2 \mu^2_\text{up} >0$ and $\varepsilon >0$ such that
$$\frac{\varepsilon}{2(|\dot{\mu}_{\text{low}}|^2 + \gamma_\text{up}^2)} < 1 \quad \mbox{and} \quad \left(3+ \frac{3}{\alpha}\right) \frac{\varepsilon L^2}{1-(1+\alpha)L^2\mu_\text{up}^2} < \frac{\delta^2}{2}.$$
Then $\left[t_k, t_k + \frac{\varepsilon}{2(|\dot{\mu}_{\text{low}}|^2 + \gamma_\text{up}^2)}\right ]$ are disjoint intervals for any two distinct $k \geq 0$. 

Also, by the convergence property of $R(\omega_0, \cdot)$ and the fact that $\int_0^{+\infty}\|M(X(\omega_0, s))\| ds <+\infty$, there exists $k_0 \geq 0$ such that for every $k\geq k_0$,
        \begin{align*}
            \sup_{t\geq t_k}{\|R(\omega_0,t)-R(\omega_0,t_k)\|^2}<\frac{\varepsilon}{2} \quad \text{and} \quad \int_{t_k}^{+\infty} \|M(X(\omega_0,s))\|^2 ds < 1.
        \end{align*}
        
For all $k\geq k_0$ and all $t\in\left[t_k, t_k + \frac{\varepsilon}{2(\dot{\mu}_{\text{low}}|^2 + \gamma_\text{up}^2)}\right ]$, it holds
        \begingroup
        \allowdisplaybreaks
        \begin{align*}
            & \ \|X(\omega_0, t) -X(\omega_0,t_k)\|^2\\
            = & \ \|X(\omega_0, t)+\mu(t)M(X(\omega_0, t)) - X(t_k)-\mu(t_k)M(X(\omega_0, t_k)) + \mu(t_k)M(X(\omega_0, t_k)) - \mu(t)M(X(\omega_0, t))\|^2 \\
            \leq & \ (1+\alpha) \|\mu(t_k)(M(X(\omega_0, t_k))-M(X(\omega_0, t)))\|^2 \\
            & \ + \left(1+ \frac{1}{\alpha}\right) \left \|\int_{t_k}^t (\dot{\mu}(s)-\gamma(s))M(X(\omega_0, s)) ds + \int_{t_k}^t \sigma(s,X(\omega_0, s)) dW(s) + (\mu(t_k) - \mu(t))M(X(\omega_0, t))\right \|^2 \\
            \leq & \ (1+\alpha) \mu_\text{up}^2 \|M(X(\omega_0, t_k))-M(X(\omega_0, t))\|^2\\
            & + \left(3+ \frac{3}{\alpha}\right) \left (\left \|\int_{t_k}^t (\dot{\mu}(s)-\gamma(s))M(X(\omega_0, s)) ds \right \|^2 + \left \|\int_{t_k}^t \sigma(s,X(\omega_0, s)) dW(s) \right \|^2 \right.\\
            & \qquad \qquad \qquad \quad + |\mu(t_k) - \mu(t)|^2\|M(X(\omega_0, t)) \|^2 \Bigg )\\
            \leq & \ (1+\alpha) \mu_\text{up}^2 \|M(X(\omega_0, t_k))-M(X(\omega_0, t))\|^2\\
            & + \left(3+ \frac{3}{\alpha}\right) \left (2(|\dot{\mu}_{\text{low}}|^2 + |\gamma_\text{up}|^2)(t-t_k)\int_{t_k}^t \|M(X(\omega_0, s))\|^2 ds + \left \|R(\omega_0,t) - R(\omega_0,t_k) \right \|^2  + \right.\\
           & \qquad \qquad \qquad + 4\mu_\text{up}^2\|M(X(\omega_0,t))\|^2 \Bigg )\\
            \leq & \ (1+\alpha) \mu_\text{up}^2 \|M(X(\omega_0, t_k))-M(X(\omega_0, t))\|^2 \\
            & + \left(3+ \frac{3}{\alpha}\right) \left (2(t-t_k)(|\dot{\mu}_{\text{low}}|^2 + \gamma_\text{up}^2) + \frac{\varepsilon}{2}  + 4\mu_\text{up}^2\|M(X(\omega_0,t))\|^2 \right )\\
             \leq & \ (1+\alpha) \mu_\text{up}^2 \|M(X(\omega_0, t_k))-M(X(\omega_0, t))\|^2 + \left(3+ \frac{3}{\alpha}\right) \left (\varepsilon  + 4\mu_\text{up}^2\|M(X(\omega_0,t))\|^2 \right ),
        \end{align*}
        \endgroup
where the first inequality is the one between the geometric and the arithmetic mean, the second inequality follows from the Cauchy-Schwarz inequality and the third inequality follows from the H\"older inequality for integrals.

By making use of the Lipschitz continuity of $M$, it yields
\begin{align*}
& \|M(X(\omega_0, t_k))-M(X(\omega_0, t))\|^2 \\
\leq & \ (1+\alpha) L^2 \mu_\text{up}^2 \|M(X(\omega_0, t_k))-M(X(\omega_0, t))\|^2 + \left(3+ \frac{3}{\alpha}\right) L^2 \left (\varepsilon  + 4\mu_\text{up}^2\|M(X(\omega_0,t))\|^2 \right), 
\end{align*}
consequently,        
\begin{align*}
            \|M(X(\omega_0, t_k))-M(X(\omega_0, t))\|^2 \leq \left(3+ \frac{3}{\alpha}\right) \frac{L^2}{1-(1+\alpha)L^2\mu_\text{up}^2}\left (\varepsilon  + 4\mu_\text{up}^2\|M(X(\omega_0,t))\|^2 \right).
        \end{align*}
Therefore, for all $k\geq k_0$ and all $t\in\left[t_k, t_k + \frac{\varepsilon}{2(|\dot{\mu}_{\text{low}}|^2 + \gamma_\text{up}^2)}\right ]$, from
  \begin{align*}
  \|M(X(\omega_0, t))\|^2 &\geq \frac{1}{2} \|M(X(\omega_0,t_k))\|^2 - \|M(X(\omega_0,t) - M(X(\omega_0,t_k))\|^2,
\end{align*}
we get
        \begin{align*}
    \left(1 + \left(3+ \frac{3}{\alpha}\right) \frac{4 \mu_\text{up}^2 L^2}{1-(1+\alpha)L^2\mu_\text{up}^2} \right) \|M(X(\omega_0, t))\|^2 \geq \frac{\delta^2}{2} -\left(3+ \frac{3}{\alpha}\right) \frac{\varepsilon L^2}{1-(1+\alpha)L^2\mu_\text{up}^2}.
        \end{align*}
Finally,
        \begin{align*}
            & \ \left(1 + \left(3+ \frac{3}{\alpha}\right) \frac{4 \mu_\text{up}^2 L^2}{1-(1+\alpha)L^2\mu_\text{up}^2} \right) \int_0^{+\infty} \|M(X(\omega_0, s))\|^2 ds\\
            \geq & \ \sum_{k\geq k_0}\int_{t_k}^{t_k + \frac{\varepsilon}{2(|\dot{\mu}_{\text{low}}|^2 + \gamma_\text{up}^2)}} \|M(X(\omega_0, s))\|^2 ds\\
            \geq & \ \sum_{k\geq k_0} \frac{\varepsilon}{2(|\dot{\mu}_{\text{low}}|^2 + \gamma_\text{up}^2)}\left( \frac{\delta^2}{2} -\left(3+ \frac{3}{\alpha}\right) \frac{\varepsilon L^2}{1-(1+\alpha)L^2\mu_\text{up}^2} \right)  =+\infty,
        \end{align*}
        which contradicts $\int_0^{+\infty}\|M(X(\omega_0, s))\|^2 ds <+\infty$. This means that for all $\omega \in \Omega_I \cap \Omega_R$
        \begin{align*}
           0 = \liminf_{t\rightarrow +\infty} \|M(X(\omega, t))\| = \limsup_{t\rightarrow +\infty} \|M(X(\omega, t))\| = \lim_{t\rightarrow +\infty} \|M(X(\omega, t))\| = 0.
        \end{align*}
Then, for all $x^* \in \zer M$ and all $\omega \in \Omega_{x^*} \cap \Omega_I \cap \Omega_R$, the Cauchy-Schwarz inequality gives
        \begin{align*}
            0 \leq \lim_{t\rightarrow +\infty} \langle M(X(\omega, t)), X(\omega, t)-x^*\rangle \leq \lim_{t\rightarrow +\infty} \underbrace{\|M(X(\omega, t))\|}_{\rightarrow 0} \underbrace{\|X(\omega, t)-x^*\|}_\text{is bounded}=0.
        \end{align*}
Using~\eqref{eq:for-separability-argument-tdp} and that $\mu(t)\geq \mu_\text{low}>0$ for all $t \geq 0$, it follows that $\lim_{t\rightarrow +\infty} \|X(\omega, t)-x^*\|$ exists and is finite.

Next, we will employ an argument from \cite{s.-fadili-attouch} to demonstrate the existence of a subset of $\Omega$ of probability one, which is independent of a previously chosen $x^* \in \zer M$, such that for all $\omega$ in this subset and all $x^* \in \zer M$ the limit $\lim_{t\rightarrow +\infty} \|X(\omega, t)-x^*\|$ exists and is finite.

Taking into account that $\zer M$ is closed, by the separability of $\R^n$ there exists a countable set $S\subseteq \R^n$ which is dense in  $\zer M$. Because $S$ is countable,
        \begin{align*}
            \mathbb{P}\left(\bigcap_{s\in S}{\Omega_s}\right)=1-\mathbb{P}\left(\bigcup_{s\in S} \Omega_s^c\right) \geq 1-\sum_{s\in S} \mathbb{P}(\Omega_s^c)=1.
        \end{align*}
Then  $\mathbb{P} \left(\bigcap_{s\in S}{\Omega_s} \cap \Omega_I \cap \Omega_R \right) =1$. Let $\omega \in \bigcap_{s\in S}{\Omega_s} \cap \Omega_I \cap \Omega_R$ be fixed and choose $x^*\in \zer M$. Then there exists a sequence $(s^k)_{k\geq 0}\subseteq S$ such that $s^k\rightarrow x^*$ as $k \rightarrow +\infty$. For every $k \geq 0$, since $\omega \in \Omega_{s^k}$,
        \begin{align*}
            \lim_{t\rightarrow +\infty}{\|X(\omega,t)-s^k\|} \ \mbox{exists and is finite}.
        \end{align*}
Applying the triangle inequality, it follows
        \begin{align*}
            | \|X(\omega,t)-s^k\| - \|X(\omega,t)-x^*\| |\leq \|s^k-x^*\| \quad \forall k \geq 0 \ \forall t \geq 0.
        \end{align*}
This gives for all $k \geq 0$
\begin{align*}
- \|s^k-x^*\| +  \lim_{t\rightarrow +\infty}{\|X(\omega,t)-s^k\|}
\leq &  \ \liminf_{t\rightarrow +\infty}{\|X(\omega,t)-x^*\|} \\
\leq & \ \limsup_{t\rightarrow +\infty}{\|X(\omega,t)-x^*\|} 
\leq  \lim_{t\rightarrow +\infty}{\|X(\omega,t)-s^k\|} + \|s^k-x^*\|.
\end{align*}
Letting $k \rightarrow +\infty$, we get 
$$\lim_{t\rightarrow +\infty} \|X(\omega,t)-x^*\| = \lim_{k \rightarrow +\infty} \lim_{t\rightarrow +\infty}{\|X(\omega,t)-s^k\|} \in \R.$$

Finally, we recall that there exists $\Omega_\text{cont} \in {\cal F}$ such that $\mathbb{P}(\Omega_\text{cont})=1$ and $X(\omega,\cdot)$ is continuous for every $\omega\in\Omega_\text{cont}$. Let $\omega \in \Omega_\text{converge}:= \bigcap_{s\in S}{\Omega_s} \cap \Omega_I \cap \Omega_R \cap \Omega_\text{cont}$. It holds $\mathbb{P}(\Omega_\text{converge})=1$. For $x^* \in \zer M$, there exists $C(\omega) \in \R$ and $T(\omega) \leq 0$ such that $\|X(\omega,t)-x^*\|\leq C(\omega)$ for all $t\geq T(\omega)$. Because $X(\omega,\cdot)$ is continuous, it holds
        \begin{align*}
            \sup_{t\in[0,T(\omega)]}{\|X(\omega,t)\|} < +\infty.
        \end{align*}
        Thus,
        \begin{align*}
            \sup_{t\geq 0} \|X(\omega, t)\| \leq \max\left (\sup_{t\in[0,T(\omega)]}{\|X(\omega,t)\|}, C(\omega)+\|x^*\| \right ) < +\infty.
        \end{align*}
        
(iv) We will use Opial's Lemma to prove the convergence of the trajectory process. We fix $\omega \in \Omega_\text{converge}$ and recall that above we proved that for every $x^* \in \zer M$ the limit $\lim_{t\rightarrow +\infty} \|X(\omega,t)-x^*\|$ exists. In addition, $X(\omega, \cdot)$ is bounded, therefore, its set of limit points is not empty. Let $\overline x(\omega)$ be such a limit point, which means that there exists a sequence $(t_k)_{k \geq 0}\subseteq \R_+$ such that \begin{align*}
            \lim_{k\rightarrow\infty}{X(\omega,t_k)}=\overline{x}(\omega).
        \end{align*}
From $\lim_{t\rightarrow +\infty}{\|M(X(\omega, t))\|}=0$ and the continuity of $M$, we see that $\overline{x}(\omega)\in \zer M$. Since both conditions in Opial's Lemma are satisfied, there exists $x^*(\omega) \in \zer M$ such that $\lim_{t\rightarrow +\infty} X(\omega,t)=x^*(\omega)$. 

Since $\omega\in\Omega_\text{converge}$ was arbitrarily chosen, there exists a $\zer M$-valued random variable $x^*$ such that $\lim_{t\rightarrow +\infty} X(t) = x^*$ almost surely.
\end{proof}

\section{Continuous time system: convergence rates in expectation}\label{sec-cont-time-rates}

In this section we will provide upper bounds and convergence rates in expectation for the ergodic squared norm of the operator and the ergodic gap function. In addition, in case $M$ is $\kappa$-strongly monotone with constant $\kappa >0$, we derive upper bounds in expectation for the squared distance of the trajectory process to the unique zero of $M$. 

We recall that $M : \R^n \rightarrow \R^n$ is called $\kappa$-strongly monotone with constant $\kappa >0$ if  $\langle Mx - My, x - y\rangle \geq \kappa \|x-y\|^2$ for all $x,y \in \R^n$.

Regarding the parameter functions, throughout this section, we will assume that:
\begin{align}\label{condmugamma2}
 0 < \mu(t) \leq \mu_\text{up}:=\mu(0) \quad \forall t\geq 0 \ \mbox{and} \ \mu \ \mbox{is nonincreasing on} \ [0,+\infty).
\end{align}

\begin{thm}\label{thm:rates-tdp}
Let $X$  be a trajectory process of~\eqref{eq:SDE-M} and $x^*\in\ \zer M$. Then the following statements are true:
    \begin{enumerate}
    \item[(i)] Assume that $\gamma$ is nonincreasing on $[0,+\infty)$. For all $t > 0$ it holds
        \begin{align*}
            \mathbb{E}\left(\frac{1}{t} \int_0^t \langle X(s)-x^*,M(X(s))\rangle ds\right) \leq \frac{1}{\gamma(t) t}\left(\frac{\mu_\text{up}^2L^2}{2}+ \mu_\text{up} L +\frac{1}{2}\right)\distance(X_0,\zer M)^2+\frac{\sigma_*^2}{2\gamma(t)\mu(t)}
        \end{align*}
        and
        \begin{align*}
            \mathbb{E}\left(\frac{1}{t}\int_0^t \|M(X(s))\|^2 ds\right)\leq \frac{1}{\gamma(t) \mu(t)t}\left(\frac{\mu_\text{up}^2L^2}{2}+\mu_\text{up} L +\frac{1}{2}\right)\distance(X_0,\zer M)^2+\frac{\sigma_*^2}{2\gamma(t)\mu(t)}.
        \end{align*}
        
        If, in addition, $\int_0^{+\infty}\sigma_\infty^2(s) ds < +\infty$, then
        \begin{align*}
            \mathbb{E}\left(\frac{1}{t}\int_0^t \langle X(s)-x^*,M(X(s))\rangle ds\right)=\mathcal{O}\left(\frac{1}{\gamma(t)t}\right) \quad \mbox{as} \ t \rightarrow +\infty.
        \end{align*}
        and
         \begin{align*}
            \mathbb{E}\left(\frac{1}{t}\int_0^t \|M(X(s))\|^2 ds\right)=\mathcal{O}\left(\frac{1}{\gamma(t)\mu(t)t}\right) \quad \mbox{as} \ t \rightarrow +\infty
        \end{align*}
        \item[(ii)] Assume that $0 < \gamma_\text{low} \leq \gamma(t)$ for all $t \geq 0$. For all $t > 0$ it holds
        \begin{align*}
            \mathbb{E}\left(\frac{1}{t} \int_0^t \langle X(s)-x^*,M(X(s))\rangle ds\right) \leq \frac{1}{\gamma_\text{low} t}\left(\frac{\mu_\text{up}^2L^2}{2}+ \mu_\text{up} L +\frac{1}{2}\right)\distance(X_0,\zer M)^2+\frac{\sigma_*^2}{2\gamma_\text{low}\mu(t)}
        \end{align*}
        and
        \begin{align*}
            \mathbb{E}\left(\frac{1}{t}\int_0^t \|M(X(s))\|^2 ds\right)\leq \frac{1}{\gamma_\text{low} \mu(t)t}\left(\frac{\mu_\text{up}^2L^2}{2}+\mu_\text{up} L +\frac{1}{2}\right)\distance(X_0,\zer M)^2+\frac{\sigma_*^2}{2\gamma_\text{low}\mu(t)}.
        \end{align*}
        
        If, in addition, $\int_0^{+\infty}\sigma_\infty^2(s) ds < +\infty$, then
        \begin{align*}
            \mathbb{E}\left(\frac{1}{t}\int_0^t \langle X(s)-x^*,M(X(s))\rangle ds\right)=\mathcal{O}\left(\frac{1}{t}\right) \quad \mbox{as} \ t \rightarrow +\infty.
        \end{align*}
        and
         \begin{align*}
            \mathbb{E}\left(\frac{1}{t}\int_0^t \|M(X(s))\|^2 ds\right)=\mathcal{O}\left(\frac{1}{\mu(t)t}\right) \quad \mbox{as} \ t \rightarrow +\infty
        \end{align*}
        \item[(iii)] Assume that $0 < \gamma_\text{low} \leq \gamma(t)$ for all $t \geq 0$. If $M$ is $\kappa$-strongly monotone with constant $\kappa \geq\frac{1}{2\mu_\text{up}}$ and $X_0 \neq x^*$, then for all $t \geq 0$ it holds
        \begin{align*}
            \mathbb{E}\left(\frac{1}{2}\|X(t)-x^*\|^2\right)\leq \left(\frac{1}{2}\|X_0-x^*\|^2 +\mu_\text{up}\langle M(X_0),X_0-x^*\rangle + \frac{\mu_\text{up}^2}{2}\|M(X_0)\|^2\right)e^{-\frac{\gamma_\text{low}}{2\mu_\text{up}}t}+\frac{\sigma_*^2\mu_\text{up}}{\gamma_\text{low}}.
        \end{align*}
        If, in addition, $\sigma_\infty$ is decreasing and vanishes at $+\infty$, then for all $\lambda\in(0,1)$ and all $t > 0$ it holds
        \begin{align*}
            \mathbb{E}\left(\frac{1}{2}\|X(t)-x^*\|^2 \right) \leq & \ \left(\frac{1}{2}\|X_0-x^*\|^2 +\mu_\text{up}\langle M(X_0),X_0-x^*\rangle + \frac{\mu_\text{up}^2}{2}\|M(X_0)\|^2\right)e^{-\frac{\gamma_\text{low}}{2\mu_\text{up}}t}\\
            & \ +\frac{\sigma_*^2\mu_\text{up}}{\gamma_\text{low}}e^{-\frac{\gamma_\text{low}}{2\mu_\text{up}}(1-\lambda)t}+\frac{\mu_\text{up}}{\gamma_\text{low}}\sigma_\infty^2(\lambda t).
        \end{align*}
    \end{enumerate}
\end{thm}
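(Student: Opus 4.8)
All three statements rest on the Itô identity \eqref{eq:ito-formula-anchor-function} for the anchor function $\phi(t,x,z)=\tfrac{1}{2}\|x+\mu(t)z-x^*\|^2$ derived in the proof of Theorem~\ref{thm:convergence-tdp}, namely $\psi(t):=\phi(t,X(t),M(X(t)))=\xi-U(t)+N(t)+A(t)$ for all $t\ge 0$, where $U(t)=\int_0^t(\gamma(s)-\dot\mu(s))g(s)\,ds$ with $g(s):=\langle M(X(s)),X(s)-x^*\rangle+\mu(s)\|M(X(s))\|^2\ge 0$ by monotonicity of $M$, $N$ is the stochastic integral, and $A(t)=\tfrac{1}{2}\int_0^t\text{tr}(\Sigma(s,X(s)))\,ds$. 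Recalling $X\in S_n^2$ and $\sigma_\infty\le\sigma_*$ from \eqref{eq:assumption-sigma}, the estimate used in Theorem~\ref{thm:convergence-tdp}(i) -- now with $\int_0^T\sigma_\infty^2\le\sigma_*^2 T$ in place of the finite total mass -- shows that $N$ restricted to any $[0,T]$ is a square-integrable continuous martingale with $\mathbb{E}(N(t))=0$, while $\text{tr}(\Sigma(s,X(s)))\le\sigma_\infty^2(s)\le\sigma_*^2$ yields $0\le\mathbb{E}(A(t))\le\tfrac{1}{2}\int_0^t\sigma_\infty^2(s)\,ds\le\tfrac{\sigma_*^2 t}{2}$. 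Finally, $M(x^*)=0$, $L$-Lipschitz continuity and Cauchy--Schwarz give $\xi=\tfrac{1}{2}\|X_0+\mu_\text{up}M(X_0)-x^*\|^2\le\big(\tfrac{\mu_\text{up}^2L^2}{2}+\mu_\text{up}L+\tfrac{1}{2}\big)\|X_0-x^*\|^2$, and for the norm-type bounds, where $x^*$ does not appear on the left, one may take $x^*$ to be the projection of $X_0$ onto $\zer M$, producing $\distance(X_0,\zer M)^2$.

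For (i) and (ii) I would take expectations in \eqref{eq:ito-formula-anchor-function} and use $\psi(t)\ge 0$ and $\mathbb{E}(N(t))=0$ to get $\mathbb{E}(U(t))\le\xi+\mathbb{E}(A(t))\le\xi+\tfrac{\sigma_*^2 t}{2}$. Since $\mu$ is nonincreasing, $\dot\mu\le 0$, so $\gamma(s)-\dot\mu(s)\ge\gamma(s)$, which is $\ge\gamma(t)$ for $s\le t$ when $\gamma$ is nonincreasing (part (i)) and $\ge\gamma_\text{low}$ in part (ii). Discarding in $g(s)$ the nonnegative summand $\mu(s)\|M(X(s))\|^2$ gives $U(t)\ge\gamma(t)\int_0^t\langle M(X(s)),X(s)-x^*\rangle\,ds$, whereas retaining it together with $\mu(s)\ge\mu(t)$ for $s\le t$ gives $U(t)\ge\gamma(t)\mu(t)\int_0^t\|M(X(s))\|^2\,ds$ (with $\gamma_\text{low}$ replacing $\gamma(t)$ in part (ii)). Dividing $\mathbb{E}(U(t))\le\xi+\tfrac{\sigma_*^2 t}{2}$ by $\gamma(t)t$, resp. $\gamma(t)\mu(t)t$ (resp. with $\gamma_\text{low}$), and inserting the bound for $\xi$ yields the non-asymptotic estimates. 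If in addition $\int_0^{+\infty}\sigma_\infty^2<+\infty$, one replaces $\tfrac{\sigma_*^2 t}{2}$ by the finite constant $\tfrac{1}{2}\int_0^{+\infty}\sigma_\infty^2$, so the noise term becomes $\mathcal{O}(1/(\gamma(t)t))$, resp. $\mathcal{O}(1/(\gamma(t)\mu(t)t))$; in part (ii), $\gamma\ge\gamma_\text{low}$ turns these into the claimed $\mathcal{O}(1/t)$ and $\mathcal{O}(1/(\mu(t)t))$.

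For (iii) the plan is a stochastic Grönwall argument with an exponential weight. Put $\beta:=\tfrac{\gamma_\text{low}}{2\mu_\text{up}}$ and apply Itô's formula to $t\mapsto e^{\beta t}\psi(t)$ (equivalently, multiply \eqref{eq:ito-formula-anchor-function} by $e^{\beta t}$ and integrate by parts, the weight being of bounded variation); its $dt$-integrand equals $e^{\beta s}\big(\beta\psi(s)-(\gamma(s)-\dot\mu(s))g(s)+\tfrac{1}{2}\text{tr}(\Sigma(s,X(s)))\big)$. The crucial step is to dominate $\beta\psi$ by the dissipation $(\gamma(s)-\dot\mu(s))g(s)$: $\kappa$-strong monotonicity gives $\|X(s)-x^*\|^2\le\tfrac{1}{\kappa}\langle M(X(s)),X(s)-x^*\rangle$, hence, using $\kappa\ge\tfrac{1}{2\mu_\text{up}}$ and $\mu(s)\le\mu_\text{up}$,
\begin{align*}
\psi(s) &\le \Big( \mu(s) + \tfrac{1}{2\kappa} \Big) \langle M(X(s)), X(s) - x^* \rangle + \tfrac{\mu(s)^2}{2} \| M(X(s)) \|^2 \\
&\le 2\mu_\text{up} \langle M(X(s)), X(s) - x^* \rangle + 2\mu_\text{up} \mu(s) \| M(X(s)) \|^2 = 2\mu_\text{up}\, g(s),
\end{align*}
so that $\beta\psi(s)\le\gamma_\text{low}\,g(s)\le(\gamma(s)-\dot\mu(s))g(s)$ because $\gamma(s)-\dot\mu(s)\ge\gamma_\text{low}$; thus the $dt$-integrand is $\le\tfrac{1}{2}e^{\beta s}\sigma_*^2$. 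Taking expectations (the stochastic integral $\int_0^\cdot e^{\beta s}\,dN(s)$ has zero mean) gives $\mathbb{E}(e^{\beta t}\psi(t))\le\xi+\tfrac{1}{2}\int_0^t e^{\beta s}\sigma_\infty^2(s)\,ds$; bounding $\sigma_\infty^2\le\sigma_*^2$ and $\int_0^t e^{\beta s}\,ds\le\tfrac{1}{\beta} e^{\beta t}$, multiplying by $e^{-\beta t}$, and using $\tfrac{1}{2\beta}=\tfrac{\mu_\text{up}}{\gamma_\text{low}}$, $\xi=\tfrac{1}{2}\|X_0-x^*\|^2+\mu_\text{up}\langle M(X_0),X_0-x^*\rangle+\tfrac{\mu_\text{up}^2}{2}\|M(X_0)\|^2$, and $\mathbb{E}(\tfrac{1}{2}\|X(t)-x^*\|^2)\le\mathbb{E}(\psi(t))$ (strong monotonicity makes the remaining terms of $\psi$ nonnegative) yields the first inequality. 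For the refined estimate, split $\int_0^t e^{\beta s}\sigma_\infty^2(s)\,ds=\int_0^{\lambda t}+\int_{\lambda t}^t$, bound $\sigma_\infty^2\le\sigma_*^2$ on $[0,\lambda t]$ and $\sigma_\infty^2(s)\le\sigma_\infty^2(\lambda t)$ on $[\lambda t,t]$ by monotonicity of $\sigma_\infty$, use $\int_0^{\lambda t}e^{\beta s}\,ds\le\tfrac{1}{\beta} e^{\beta\lambda t}$ and $\int_{\lambda t}^t e^{\beta s}\,ds\le\tfrac{1}{\beta} e^{\beta t}$, and multiply by $e^{-\beta t}$ to pick up the additional terms $\tfrac{\sigma_*^2\mu_\text{up}}{\gamma_\text{low}}e^{-\beta(1-\lambda)t}$ and $\tfrac{\mu_\text{up}}{\gamma_\text{low}}\sigma_\infty^2(\lambda t)$.

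The only genuinely delicate point is the coercivity estimate $\psi(s)\le 2\mu_\text{up}\,g(s)$ in (iii): one must extract exponential decay of $\psi$ from strong monotonicity while simultaneously keeping the quadratic-in-$M$ part of $\psi$ under control, and it is precisely the hypothesis $\kappa\ge\tfrac{1}{2\mu_\text{up}}$ that makes this work and fixes the rate $e^{-\gamma_\text{low}t/(2\mu_\text{up})}$. Everything else is bookkeeping: signing $\gamma(s)-\dot\mu(s)$ and $\mu(s)-\mu(t)$ correctly from the monotonicity of $\gamma$ and $\mu$ before dividing in (i)--(ii), and handling the finite-variation weight $e^{\beta t}$ and the vanishing mean of the stochastic integrals in (iii).
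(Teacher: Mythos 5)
Your proposal is correct, and for parts (i) and (ii) it is essentially the paper's own argument: take expectations in \eqref{eq:ito-formula-anchor-function}, use $\mathbb{E}(N(t))=0$ (justified on bounded intervals exactly as you do), drop/keep the nonnegative summands of the dissipation, sign $\gamma(s)-\dot\mu(s)\ge\gamma(t)$ (resp. $\ge\gamma_\text{low}$) and $\mu(s)\ge\mu(t)$, and bound $\xi$ via Lipschitz continuity and Cauchy--Schwarz; your remark that $\distance(X_0,\zer M)^2$ is obtained by choosing $x^*$ as the projection is in fact a slightly more careful rendering of the paper's ``take the infimum over $x^*$'' (which, for the gap bound with a fixed $x^*$ on the left, really only yields $\|X_0-x^*\|^2$ -- a subtlety the paper shares). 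In part (iii) your key coercivity step, $\psi(s)\le 2\mu_\text{up}\,g(s)$ from $\kappa$-strong monotonicity together with $\kappa\ge\tfrac{1}{2\mu_\text{up}}$ and $\mu(s)\le\mu_\text{up}$, is exactly the content of the paper's chain of inequalities showing that the expected dissipation dominates $\tfrac{\gamma_\text{low}}{2\mu_\text{up}}G(s)$; the only genuine difference is the Gr\"onwall mechanism: the paper derives the integral inequality $G(t_2)\le G(t_1)-\tfrac{\gamma_\text{low}}{2\mu_\text{up}}\int_{t_1}^{t_2}G(s)\,ds+\tfrac12\int_{t_1}^{t_2}\sigma_\infty^2(s)\,ds$ and invokes the comparison result Lemma~\ref{lem:estimate-solution-cauchy-problem} against the explicitly solved linear ODE, whereas you multiply by the integrating factor $e^{\gamma_\text{low}t/(2\mu_\text{up})}$ and integrate by parts, which bypasses that lemma and the explicit ODE solution while producing identical constants; the $\lambda$-splitting of the weighted noise integral in the refined estimate is the same device in both treatments, applied by you directly to $\int_0^t e^{\beta s}\sigma_\infty^2(s)\,ds$ and by the paper to the ODE solution formula. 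No gaps.
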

\begin{proof}
(i) For all $t \geq 0$ we set (see \eqref{eq:ito-formula-anchor-function})
        \begin{align*}
        \phi(t,X(t),M(X(t)))
            & = \mu(0)\langle M(X_0),X_0-x^*\rangle + \frac{1}{2}\|X_0-x^*\|^2 + \frac{\mu(0)^2}{2}\|M(X_0)\|^2 
            \\&\quad- \int_0^t (-\dot{\mu}(s) + \gamma(s)) (\langle M(X(s)), X(s)-x^*\rangle + \mu(s)\|M(X(s))\|^2) ds
            \\&\quad+ \int_0^t \langle \sigma^\top(s,X(s))(\mu(s)M(X(s))+X(s)-x^*),dW(s)\rangle
            \\&\quad+ \frac{1}{2}\int_0^t \text{tr}(\Sigma(s,X(s))) ds
        \end{align*}
        and $G(t):=\mathbb{E}(\phi(t,X(t),M(X(t))))$. Then, using that the expectation of the stochastic integral is equal to zero and that $\dot \mu(t) \leq 0$, for all $t \geq 0$ it yields
        \begin{align}\label{eqG}
            G(t)-G(0) = & -\mathbb{E}\left(\int_0^t (-\dot{\mu}(s) + \gamma(s)) (\langle M(X(s)), X(s)-x^*\rangle + \mu(s)\|M(X(s))\|^2) ds\right) \nonumber\\
            & + \frac{1}{2}\mathbb{E}\left(\int_0^t \text{tr}(\Sigma(s,X(s)))ds\right)\\
            \leq & -\gamma(t) \mathbb{E}\left(\int_0^t \langle M(X(s)), X(s)-x^*\rangle ds\right) -\gamma(t) \mu(t) \mathbb{E}\left(\int_0^t \|M(X(s))\|^2 ds\right) + \frac{\sigma_*^2t}{2}.\nonumber
        \end{align}
Rearranging and using that $G(t)\geq 0$, we obtain for all $t >0$
        \begin{align*}
        \begin{split}
            & \gamma(t) \mathbb{E}\left(\frac{1}{t}\int_0^t \langle M(X(s)), X(s)-x^*\rangle ds\right) + \gamma(t) \mu(t)\mathbb{E}\left(\frac{1}{t}\int_0^t \|M(X(s))\|^2 ds\right)\\
            \leq & \ \frac{1}{t}\left(\mu(0)\langle M(X_0),X_0-x^*\rangle +\frac{1}{2}\|X_0-x^*\|^2 + \frac{\mu(0)^2}{2}\|M(X_0)\|^2\right)+\frac{\sigma_*^2}{2}\\
            \leq & \ \frac{1}{t}\left(\mu(0) L +\frac{1}{2}+\frac{\mu(0)^2 L^2}{2}\right)\|X_0-x^*\|^2+\frac{\sigma_*^2}{2}.
        \end{split}
        \end{align*}
Taking the infimum over all $x^*\in \zer M$ yields the result.

If $\int_0^{+\infty}\sigma_\infty^2(s) ds < +\infty$, then we have for all $t > 0$
\begin{align*}
             & \ G(t)-G(0)\\
            \leq & \ -\gamma(t) \mathbb{E}\left(\int_0^t \langle M(X(s)), X(s)-x^*\rangle ds\right) -\gamma(t)\mu(t) \mathbb{E}\left(\int_0^t \|M(X(s))\|^2 ds\right) + \frac{1}{2}\mathbb{E}\left(\int_0^{+\infty} \sigma_\infty^2(s) ds\right),
        \end{align*}
which, after rearrangement, gives
\begin{align*}
        \begin{split}
            & \gamma(t) \mathbb{E}\left(\frac{1}{t}\int_0^t \langle M(X(s)), X(s)-x^*\rangle ds\right) + \gamma(t)\mu(t)\mathbb{E}\left(\frac{1}{t}\int_0^t \|M(X(s))\|^2 ds\right)\\
            \leq & \ \frac{1}{t}\left(\mu(0)\langle M(X_0),X_0-x^*\rangle +\frac{1}{2}\|X_0-x^*\|^2 + \frac{\mu(0)^2}{2}\|M(X_0)\|^2\right)+ \frac{1}{2t}\mathbb{E}\left(\int_0^{+\infty} \sigma_\infty^2(s) ds\right),
        \end{split}
        \end{align*}
thus proving
     \begin{align*}
            \mathbb{E}\left(\frac{1}{t}\int_0^t \langle X(s)-x^*,M(X(s))\rangle ds\right)=\mathcal{O}\left(\frac{1}{\gamma(t)t}\right) \ \mbox{as} \ t \rightarrow +\infty
        \end{align*}
        and
        \begin{align*}
            \mathbb{E}\left(\frac{1}{t}\int_0^t\|M(X(s))\|^2 ds\right)=\mathcal{O}\left(\frac{1}{\gamma(t)\mu(t)t}\right) \ \mbox{as} \ t \rightarrow +\infty.
        \end{align*}   
        
(ii) The proof follows in the same lines as that of statement (i).

(iii) By using the strong monotonicity of the operator, for all $t_2 > t_1 \geq 0$ it holds
        \begin{align*}
            & \ \mathbb{E}\left(\int_{t_1}^{t_2} (\dot{\mu}(s) - \gamma(s)) (\langle M(X(s)), X(s)-x^*\rangle + \mu(s)\|M(X(s))\|^2) ds\right) \\
            \leq & \ \mathbb{E}\left(\int_{t_1}^{t_2} \left(-\mu(s)\gamma(s)\|M(X(s))\|^2 - \gamma(s)\langle X(s)-x^*, M(X(s))\rangle \right ) ds\right)\\
               \leq & \ \gamma_\text{low} \mathbb{E}\left(\int_0^t \left(-\mu(s) \|M(X(s))\|^2 - \langle X(s)-x^*, M(X(s))\rangle \right)ds\right)\\
            \leq & \ \frac{\gamma_\text{low}}{2\mu_\text{up}} \mathbb{E}\left(\int_{t_1}^{t_2} \left(-2\mu(s) \mu_\text{up} \|M(X(s))\|^2 - 2\mu_\text{up} \langle X(s)-x^*, M(X(s))\rangle \right) ds\right)\\
            \leq & \ \frac{\gamma_\text{low}}{2\mu_\text{up}} \mathbb{E}\left(\int_{t_1}^{t_2} \left(-2\mu(s)^2 \|M(X(s))\|^2 - \mu(s) \langle X(s)-x^*, M(X(s))\rangle - \mu_{up} \kappa \|X(s)-x^*\|^2 \right)ds\right)\\
            \leq & \ \frac{\gamma_\text{low}}{2\mu_\text{up}} \mathbb{E}\left(\int_{t_1}^{t_2} \left(-\frac{\mu(s)^2}{2}\|M(X(s))\|^2 - \mu(s)\langle X(s)-x^*,M(X(s))\rangle - \frac{1}{2}\|X(s)-x^*\|^2 \right) ds\right)\\
            = & \ -\frac{\gamma_\text{low}}{2\mu_\text{up}}\int_{t_1}^{t_2} G(s) ds.
        \end{align*}
Hence, according to \eqref{eq:ito-formula-anchor-function}, for all $t_2 > t_1 \geq 0$ it holds
        \begin{align*}
            G(t_2) \leq G(t_1) -\frac{\gamma_\text{low}}{2\mu_\text{up}}\int_{t_1}^{t_2}  G(s) ds + \frac{1}{2} \int_{t_1}^{t_2} \sigma_\infty^2(s)ds \leq G(t_1) -\frac{\gamma_\text{low}}{2\mu_\text{up}}\int_{t_1}^{t_2}  G(s) ds + \frac{1}{2}\int_{t_1}^{t_2} \sigma_*^2ds.
        \end{align*}
The solution of the ordinary differential equation
        \begin{align*}
            \begin{cases}
                & \varphi'(t)=-\frac{\gamma_\text{low}}{2\mu_\text{up}}\varphi(t)+\frac{\sigma_*^2}{2} \quad \forall t > 0
                \\& \varphi(0)= G(0)
            \end{cases}
        \end{align*}
is
    \begin{align*}
        \varphi(t) = G(0) e^{-\frac{\gamma_\text{low}}{2\mu_\text{up}}t}+\frac{\sigma_*^2 \mu_\text{up}}{\gamma_\text{low}} (1-e^{-\frac{\gamma_\text{low}}{2\mu_\text{up}}t}) \quad \forall t\geq 0,
    \end{align*}
hence, invoking Lemma~\ref{lem:estimate-solution-cauchy-problem}, it holds
    \begin{align*}
        \mathbb{E}\left(\frac{1}{2}\|X(t)-x^*\|^2 \right) \leq G(t)\leq G(0) e^{-\frac{\gamma_\text{low}}{2\mu_\text{up}}t}+\frac{\sigma_*^2 \mu_\text{up}}{\gamma_\text{low}} \quad \forall t\geq 0,
    \end{align*}
and the claim follows.

For $\sigma_\infty$ decreasing and vanishing at $+\infty$, we consider the ordinary differential equations
    \begin{align*}
        \begin{cases}
            & \varphi'(t) = -\frac{\gamma_\text{low}}{2\mu_\text{up}}\varphi(t) + \frac{\sigma_\infty^2(t)}{2} \quad \forall t > 0
            \\& \varphi(0) = G(0).
        \end{cases}
    \end{align*}
Employing a technique used in \cite{s.-fadili-attouch}, we choose $\lambda\in(0,1)$ and derive for its solution the following estimates, that hold for all $t \geq 0$
    \begin{align*}
        \varphi(t) &= G(0)e^{-\frac{\gamma_\text{low}}{2\mu_\text{up}}t} + e^{-\frac{\gamma_\text{low}}{2\mu_\text{up}}t}\int_0^t \frac{\sigma_\infty^2(s)}{2}e^{\frac{\gamma_\text{low}}{2\mu_\text{up}}s} ds
        \\&\leq G(0)e^{-\frac{\gamma_\text{low}}{2\mu_\text{up}}t} + e^{-\frac{\gamma_\text{low}}{2\mu_\text{up}}t}\left(\int_0^{\lambda t}\frac{\sigma_\infty^2(s)}{2}e^{\frac{\gamma_\text{low}}{2\mu_\text{up}}s}ds + \int_{\lambda t}^t \frac{\sigma_\infty^2(s)}{2}e^{\frac{\gamma_\text{low}}{2\mu_\text{up}}s} ds \right)
        \\&\leq G(0)e^{-\frac{\gamma_\text{low}}{2\mu_\text{up}t}} + e^{-\frac{\gamma_\text{low}}{2\mu_\text{up}}t}\left(\frac{\sigma_*^2}{2} \int_0^{\lambda t}e^{\frac{\gamma_\text{low}}{2\mu_\text{up}}s}ds+\frac{\sigma_\infty^2(\lambda t)}{2}\int_{\lambda t}^t e^{\frac{\gamma_\text{low}}{2\mu_\text{up}}s} ds\right)
        \\& \leq G(0)e^{-\frac{\gamma_\text{low}}{2\mu_\text{up}}t} + e^{-\frac{\gamma_\text{low}}{2\mu_\text{up}}t}\left(\frac{\sigma_*^2 \mu_\text{up}}{\gamma_\text{low}}e^{\frac{\gamma_\text{low}}{2\mu_\text{up}}\lambda t} + \frac{2\mu_\text{up}}{\gamma_\text{low}}\frac{\sigma_\infty^2(\lambda t)}{2}e^{\frac{\gamma_\text{low}}{2\mu_\text{up}}t}\right).
    \end{align*}
Hence, invoking again Lemma~\ref{lem:estimate-solution-cauchy-problem}, it holds
    \begin{align*}
        G(t)\leq G(0)e^{-\frac{\gamma_\text{low}}{2\mu_\text{up}}t} + \frac{\sigma_*^2 \mu_\text{up}}{\gamma_\text{low}}e^{-\frac{\gamma_\text{low}}{2\mu_\text{up}}(1-\lambda)t} + \frac{\mu_\text{up}}{\gamma_\text{low}}\sigma_\infty^2(\lambda t) \quad \forall t\geq 0,
    \end{align*}
proving the desired result.
\end{proof}

\begin{rmk}\label{rem32}
\rm    While the boundedness of $\gamma$ from above is essential in the proof of Theorem~\ref{thm:convergence-tdp}, the convergence rates in Theorem~\ref{thm:rates-tdp} hold also for $\gamma$ not bounded from above.
\end{rmk}

\begin{rmk}\label{rem33}
\rm Finding the saddle points of the minimax problem \eqref{minimax}
\begin{align*}
    \min_{x\in\R^p}\max_{y\in\R^q} \Phi(x,y),
\end{align*}
where $\Phi : \R^p \times \R^q \rightarrow \R$ is a convex-concave function with $L$-Lipschitz continuous gradient, is equivalent to solving equation \eqref{eq:find-zeros} for
\begin{align*}
    M: \R^p \times \R^q \rightarrow \R, \quad M(x,y):=(\nabla_x \Phi(x,y), -\nabla_y \Phi(x,y)).
\end{align*}
The corresponding stochastic differential equation \eqref{eq:SDE-M} reads
\begin{align}
\tag{SDE-minimax}
\label{eq:SDE-minimax}
\begin{split}
    \begin{cases} &d(X(t)+\mu(t)\nabla_x \Phi(X(\cdot),Y(\cdot))) = - (\gamma(t) - \dot{\mu}(t)) \nabla_x \Phi(X(\cdot),Y(\cdot)) dt + \sigma_1(t,X(t), Y(t)) dW(t),\\
    &d(Y(t)-\mu(t)\nabla_y \Phi(X(\cdot),Y(\cdot))) = (\gamma(t) - \dot{\mu}(t)) \nabla_y \Phi(X(\cdot),Y(\cdot)) dt + \sigma_2(t,X(t), Y(t)) dW(t) \quad \forall t > 0,\\
    & X(0)=X_0, Y(0)=Y_0,
    \end{cases}
\end{split}
\end{align}
and is defined over a filtered probability space $(\Omega,\mathcal{F},\{\mathcal{F}_t\}_{t\geq 0},\mathbb{P})$ with $\sigma_1: \R_+ \times \R^p \times \R^q \rightarrow \R^{p \times m}$ and $\sigma_2 : \R_+ \times \R^p \times \R^q \rightarrow \R^{q \times m}$ measurable such that $\begin{pmatrix} \sigma_1 \\ \sigma_2 \end{pmatrix} : \R_+ \times \R^p \times \R^q \rightarrow \R^{(p+q) \times m}$ satisfies \eqref{eq:assumption-sigma}, $W$ a $m$-dimensional Brownian motion, $X(\cdot)$, $Y(\cdot)$,  $\nabla_x \Phi(X(\cdot),Y(\cdot))$, and $\nabla_y \Phi(X(\cdot),Y(\cdot))$ are stochastic It\^{o} processes with the same $m$-dimensional Brownian motion $W$, and parameter functions $\mu : [0,+\infty) \rightarrow (0,+\infty)$ and $\gamma : [0,+\infty) \rightarrow (0,+\infty)$ assumed to be continuous differentiable and, respectively, integrable and to fulfill \eqref{condmugamma}.

The system \eqref{eq:SDE-minimax} has a unique trajectory solution $(X,Y) \in S_{p+q}^\nu$, for all $\nu \geq 2$. The trajectory $(X,Y)$ satisfies all the statements of Theorem \ref{thm:convergence-tdp} under the corresponding hypotheses. Furthermore, for the gap function it holds almost surely $\lim_{t\rightarrow +\infty}{\Phi(X(t),y^*)-\Phi(x^*,Y(t))}=0$ for any saddle point $(x^*,y^*)$ of \eqref{minimax}. This is a straightforward consequence of the fact that for all $t \geq 0$
    \begin{align}\label{eq:for-minimax-conv}
    \begin{split}
  & \ \|(\nabla_x \Phi(X(t),Y(t)), -\nabla_y \Phi(X(t),Y(t)))\|   \|(X(t),Y(t))-(x^*,y^*)\|  \\
  \geq & \ \langle (\nabla_x \Phi(X(t),Y(t)), -\nabla_y \Phi(X(t),Y(t))), (X(t),Y(t))-(x^*,y^*)\rangle
        \\ = & \ \langle \nabla_x \Phi(X(t),Y(t)), X(t)-x^*\rangle + \langle -\nabla_y \Phi(X(t),Y(t)), Y(t)-y^*\rangle
        \\ \geq & \ \Phi(X(t),Y(t)) - \Phi(x^*,Y(t)) - \Phi(X(t),Y(t)) + \Phi(X(t),y^*)
        \\ = & \ \Phi(X(t),y^*) - \Phi(x^*,Y(t)) \geq 0.
    \end{split}
    \end{align}

The trajectory $(X,Y)$ satisfies also all the statements of Theorem \ref{thm:rates-tdp} under the corresponding hypotheses. Furthermore, for any saddle point $(x^*,y^*)$ of \eqref{minimax} it holds
 \begin{align*}
           & \  \mathbb{E}\left(\Phi\left(\frac{1}{t}\int_0^t X(s)ds,y^*\right)-\Phi\left(x^*,\frac{1}{t} \int_0^t Y(s)ds\right)\right) \\
            \leq & \ \frac{1}{\gamma(t) t}\left(\frac{\mu_\text{up}^2L^2}{2}+ \mu_\text{up} L +\frac{1}{2}\right)\distance((X_0,Y_0), \zer M)^2+\frac{\sigma_*^2}{2\gamma(t)\mu(t)} \quad \forall t > 0,
        \end{align*}
if $\gamma$ is nonincreasing on $[0,+\infty)$, and
 \begin{align*}
            & \ \mathbb{E}\left(\Phi\left(\frac{1}{t}\int_0^t X(s)ds,y^*\right)-\Phi\left(x^*,\frac{1}{t} \int_0^t Y(s)ds\right)\right) \\
            \leq & \ \frac{1}{\gamma_\text{low} t}\left(\frac{\mu_\text{up}^2L^2}{2}+ \mu_\text{up} L +\frac{1}{2}\right)\distance((X_0,Y_0), \zer M)^2+\frac{\sigma_*^2}{2\gamma_\text{low}\mu(t)} \quad \forall t > 0,
        \end{align*}
if $0 < \gamma_\text{low} \leq \gamma(t)$ for all $t \geq 0$. If, in addition, $\int_0^{+\infty}\sigma_\infty^2(s) ds < +\infty$, then
        \begin{align*}
            \mathbb{E}\left(\Phi\left(\frac{1}{t}\int_0^t X(s)ds,y^*\right)-\Phi\left(x^*,\frac{1}{t} \int_0^t Y(s)ds\right)\right) =\mathcal{O}\left(\frac{1}{\gamma(t)t}\right) \quad \mbox{as} \ t \rightarrow +\infty
        \end{align*}
and
\begin{align*}
            \mathbb{E}\left(\Phi\left(\frac{1}{t}\int_0^t X(s)ds,y^*\right)-\Phi\left(x^*,\frac{1}{t} \int_0^t Y(s)ds\right)\right) =\mathcal{O}\left(\frac{1}{\gamma(t)t}\right) \quad \mbox{as} \ t \rightarrow +\infty,
        \end{align*}
respectively.

Indeed, invoking Jensen's inequality and \eqref{eq:for-minimax-conv}, we have for all $t > 0$
\begin{align*}
0 \leq & \ \mathbb{E}\left(\Phi\left(\frac{1}{t}\int_0^t X(s)ds,y^*\right)-\Phi\left(x^*,\frac{1}{t} \int_0^t Y(s)ds\right)\right)
\leq  \mathbb{E}\left(\frac{1}{t} \int_0^t  (\Phi(X(s),y^*)-\Phi(x^*,Y(s))) ds \right) \\
\leq & \ \mathbb{E}\left(\frac{1}{t} \int_0^t \left \langle (\nabla_x \Phi(X(s),Y(s)), -\nabla_y \Phi(X(s),Y(s))), (X(s),Y(s))-(x^*,y^*) \right \rangle ds \right).
\end{align*}
\end{rmk}

\section{Discrete time considerations}\label{sec-disc-time}

The aim of this section is to show how the ergodic upper bound results in expectation for the squared norm of the operator and for the gap function transfer to the stochastic variants of the Optimistic Gradient Descent Ascent (OGDA) method and the Extragradient (EG) method with constant step sizes, which can be interpreted as temporal discretizations of ~\eqref{eq:SDE-M}. 

We will consider only equations governed by operators which are monotone, but not necessarily strongly monotone. For a stochastic variant of the Optimistic Gradient Descent Ascent (OGDA) method designed to solve variational inequalities, ergodic upper bound results for the gap function in expectation have also been derived in \cite{boehm-sedlmayer-csetnek-bot}. For the stochastic Extragradient (EG) method, ergodic upper bound results in expectation for the squared norm of the operator have also been derived in \cite{gbgl} (see also \cite{bmsv, ijot}), and for the gap function \cite{boehm-sedlmayer-csetnek-bot}. As seen in Remark \ref{rem33}, the upper bound results for the gap function can be straightforwardly transferred via Jensen's inequality and the gradient inequality for convex functions to the ergodic primal-dual gap when solving convex-concave minimax problems.

Throughout this section, we will assume that only a stochastic estimator $M(\cdot, \xi)$ is accessible instead of $M$ itself. The stochastic estimator is assumed to be unbiased, meaning that $\mathbb{E}_\xi(M(x,\xi))=M(x)$ for all $x\in\R^n$. Further, it is required to have bounded variance, i.e., $\mathbb{E}_\xi(\|M(x,\xi)-M(x)\|^2)<\sigma_*^2$ for all $x\in\R^n$. 

\subsection{Stochastic Optimistic Gradient Descent Ascent (OGDA) method}\label{subsec-sogda}

Recalling the stochastic differential equation ~\eqref{eq:SDE-M}
\begin{align*}
    d(X(t)+\mu(t)M(X(t))) = - (\gamma(t) - \dot{\mu}(t)) M(X(t)) dt + \sigma(t,X(t)) dW(t),
\end{align*}
and fixing the parameter functions $\mu(t)=\gamma(t):=\gamma>0$ to be constant for all $t \geq 0$, we consider the temporal discretization
\begin{align*}
    x^{k+1} - x^k +\gamma M(x^k,\xi_k) - \gamma M(x^{k-1},\xi_{k-1}) = -\gamma M(x^k,\xi_k) \quad \forall k \geq 1.
\end{align*}
Equivalently,
\begin{align}\tag{Stochastic OGDA}\label{eq:SOGDA}
    x^{k+1} := x^k - 2\gamma M(x^k, \xi_k) + \gamma M(x^{k-1}, \xi_{k-1}) \quad \forall k \geq 1,
\end{align}
where $x^0, x^1 \in \R^n$ are given vectors and $\xi_k$ encodes the stochasticity appearing at iteration $k$. 

We consider the approximation errors
$$W_k := M(x^k, \xi_k) - M(x^k) \quad \forall k \geq 0,$$
and the sub-sigma algebras
$${\cal F}_k := \sigma(x^0, x^1, \xi_0, \xi_1, ..., \xi_{k-1}) \quad \forall k \geq 2.$$

\begin{lem}\label{lem:for-average-iterate-rate-sogda}
Let $x^* \in \zer M$ and $\gamma < \frac{1}{2L}$. For all $k \geq 2$ it holds
    \begin{align*}
        \|x^{k+1} + \gamma M(x^k,\xi_k)-x^*\|\leq & \ \|x^k + \gamma M(x^{k-1},\xi_{k-1}) - x^*\|^2\\
        & - 2\gamma\langle M(x^k,\xi_k), x^k-x^*\rangle + \frac{\gamma^2(16\gamma^2L^2-1)}{2(1-4\gamma^2L^2)} \|M(x^{k-1})\|^2 \nonumber\\
        & + \frac{12\gamma^4L^4}{1-4\gamma^2L^2}\left(\|x^{k-1} - x^{k-2}\|^2 -   \|x^k - x^{k-1}\|^2 \right)\\
& +  3 \gamma^2 \|W_k\|^2 + \frac{\gamma^2(16\gamma^2L^2+5)}{2(1-4\gamma^2L^2)} \|W_{k-1}\|^2 + \frac{12\gamma^4L^2}{1-4\gamma^2L^2} \|W_{k-2}\|^2.
    \end{align*}
\end{lem}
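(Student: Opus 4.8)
Since the asserted estimate is \emph{pathwise} — no expectation is taken — the whole argument is purely algebraic; the probabilistic properties of the estimator $M(\cdot,\xi)$ (unbiasedness, bounded variance) enter only later, when this per-step inequality is summed. The starting point is to introduce the \emph{anchor point} $z^k := x^k + \gamma M(x^{k-1},\xi_{k-1})$, so that the left-hand side of the claim is $\|z^{k+1}-x^*\|^2$ and the first term on the right-hand side is $\|z^k-x^*\|^2$. A one-line computation shows that \eqref{eq:SOGDA} collapses to the simple recursion $z^{k+1} = z^k - \gamma M(x^k,\xi_k)$, because $x^{k+1}+\gamma M(x^k,\xi_k) = x^k - \gamma M(x^k,\xi_k) + \gamma M(x^{k-1},\xi_{k-1})$. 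Expanding the square and then splitting $z^k - x^* = (x^k-x^*) + \gamma M(x^{k-1},\xi_{k-1})$ inside the inner product gives
$$\|z^{k+1}-x^*\|^2 = \|z^k-x^*\|^2 - 2\gamma\langle M(x^k,\xi_k),x^k-x^*\rangle + \gamma^2\|M(x^k,\xi_k)\|^2 - 2\gamma^2\langle M(x^k,\xi_k), M(x^{k-1},\xi_{k-1})\rangle,$$
and the identity $\|a\|^2-2\langle a,b\rangle = \|a-b\|^2-\|b\|^2$ converts the last two terms into $\gamma^2\|M(x^k,\xi_k)-M(x^{k-1},\xi_{k-1})\|^2 - \gamma^2\|M(x^{k-1},\xi_{k-1})\|^2$. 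Up to here everything is an identity: the term $-2\gamma\langle M(x^k,\xi_k),x^k-x^*\rangle$ already appears in the statement, so it remains only to bound $\gamma^2\|M(x^k,\xi_k)-M(x^{k-1},\xi_{k-1})\|^2 - \gamma^2\|M(x^{k-1},\xi_{k-1})\|^2$ from above by the remaining terms of the lemma.

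To do that I would substitute $M(x^j,\xi_j)=M(x^j)+W_j$ everywhere and use weighted Young inequalities to peel off the noise contributions $\|W_k\|^2,\|W_{k-1}\|^2,\|W_{k-2}\|^2$, leaving a deterministic core built from $\|M(x^k)-M(x^{k-1})\|^2$ and $\|M(x^{k-1})\|^2$. For the core, I would bound $\|M(x^k)-M(x^{k-1})\|^2 \le L^2\|x^k-x^{k-1}\|^2$ by $L$-Lipschitz continuity and then substitute the increment, which \eqref{eq:SOGDA} writes as $x^k-x^{k-1} = -\gamma M(x^{k-1},\xi_{k-1}) - \gamma\big(M(x^{k-1},\xi_{k-1})-M(x^{k-2},\xi_{k-2})\big)$; one splits off $W_{k-1},W_{k-2}$ once more and applies Lipschitz continuity again, now to $M(x^{k-1})-M(x^{k-2})$, so as to bring in $\|x^{k-1}-x^{k-2}\|^2$. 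Finally, one collects the coefficients of $\|x^k-x^{k-1}\|^2$ that have accumulated and solves the resulting scalar inequality for that quantity; this produces the telescoping difference $\frac{12\gamma^4L^4}{1-4\gamma^2L^2}\big(\|x^{k-1}-x^{k-2}\|^2-\|x^k-x^{k-1}\|^2\big)$ together with the stated coefficient of $\|M(x^{k-1})\|^2$. The last rearrangement is legitimate precisely because $\gamma<\tfrac{1}{2L}$ forces $1-4\gamma^2L^2>0$, which is exactly the common denominator appearing in all the coefficients. Reassembling the pieces yields the asserted inequality.

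The delicate point is this final round of bookkeeping: one has to pick the several Young parameters so that the constants come out exactly as written, and, because the argument is pathwise, the mixed terms such as $\langle M(x^{k-1}),W_{k-1}\rangle$ and $\langle W_k,W_{k-1}\rangle$ cannot simply be discarded (there is no conditional expectation to kill them) but must be absorbed by further Young inequalities — this is what forces the somewhat intricate coefficients $\frac{\gamma^2(16\gamma^2L^2-1)}{2(1-4\gamma^2L^2)}$, $\frac{\gamma^2(16\gamma^2L^2+5)}{2(1-4\gamma^2L^2)}$ and $\frac{12\gamma^4L^2}{1-4\gamma^2L^2}$ — and one must arrange that $\|x^k-x^{k-1}\|^2$ is absorbed with the correct sign into the telescoping term rather than left over on the wrong side.
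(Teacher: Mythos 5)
Your reduction is exactly the paper's opening move: with $z^k := x^k + \gamma M(x^{k-1},\xi_{k-1})$ and $z^{k+1} = z^k - \gamma M(x^k,\xi_k)$ you obtain the identity
$\|z^{k+1}-x^*\|^2 = \|z^k-x^*\|^2 - 2\gamma\langle M(x^k,\xi_k),x^k-x^*\rangle + \gamma^2\|M(x^k,\xi_k)-M(x^{k-1},\xi_{k-1})\|^2 - \gamma^2\|M(x^{k-1},\xi_{k-1})\|^2$,
which is precisely the paper's first chain of equalities (derived there by adding and subtracting $\gamma M(x^{k-1},\xi_{k-1})$). The continuation you outline --- splitting off $W_k,W_{k-1}$ by a three-term Young inequality, invoking $L$-Lipschitz continuity, bounding $\|x^k-x^{k-1}\|^2$ through $x^k-x^{k-1}=-2\gamma M(x^{k-1},\xi_{k-1})+\gamma M(x^{k-2},\xi_{k-2})$, and absorbing the resulting $4\gamma^2L^2\|x^k-x^{k-1}\|^2$ into the left-hand side so that $1-4\gamma^2L^2>0$ can be divided out and the telescoping difference appears --- is also the paper's route.

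The genuine gap is that the part you defer to ``bookkeeping'' is where the lemma actually lives, and it is not a formality. Carrying out your plan with the natural (equal-weight) Young inequalities collects the noisy operator term with coefficient $c:=\frac{\gamma^2(16\gamma^2L^2-1)}{1-4\gamma^2L^2}$ in front of $\|M(x^{k-1},\xi_{k-1})\|^2$. To reach the stated $\frac{c}{2}\|M(x^{k-1})\|^2$ one must apply $\|M(x^{k-1})+W_{k-1}\|^2 \ge \frac{1}{2}\|M(x^{k-1})\|^2-\|W_{k-1}\|^2$, which works in the required direction only when $c<0$, i.e.\ $\gamma<\frac{1}{4L}$; for $\frac{1}{4L}\le\gamma<\frac{1}{2L}$, which the hypothesis $\gamma<\frac{1}{2L}$ permits, a different splitting is needed --- your sketch does not register this case distinction. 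Moreover, even when $c<0$ this conversion adds $|c|\,\|W_{k-1}\|^2$, and together with the $\frac{3\gamma^2}{1-4\gamma^2L^2}\|W_{k-1}\|^2$ already accumulated it yields the coefficient $4\gamma^2$, not the lemma's $\frac{\gamma^2(16\gamma^2L^2+5)}{2(1-4\gamma^2L^2)}$, which is strictly smaller when $\gamma<\frac{1}{4L}$. So the assertion that suitable Young parameters ``produce the stated coefficients'' is precisely the unproved crux: as written, your proposal establishes the structural form of the estimate but not the inequality with the constants claimed in the lemma, and closing that distance requires either a sharper splitting than the one you describe or a modification of the constants.
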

\begin{proof} Let $k \geq 2$ be fixed. We observe that
\begingroup
        \allowdisplaybreaks
    \begin{align}\label{eq1alg1}
        & \ \|x^{k+1} + \gamma M(x^k,\xi_k)-x^*\|^2 \nonumber \\
        = & \ \|x^k - \gamma M(x^k,\xi_k) +  \gamma M(x^{k-1},\xi_{k-1}) -x^*\|^2 \nonumber \\
         = & \ \|x^k - \gamma M(x^k,\xi_k) +  2\gamma M(x^{k-1},\xi_{k-1}) -x^*\|^2  - \gamma^2\|M(x^{k-1},\xi_{k-1})\|^2 \nonumber\\
         & \ - 2\gamma \langle x^k - \gamma M(x^k,\xi_k) +  \gamma M(x^{k-1},\xi_{k-1}) -x^*, M(x^{k-1},\xi_{k-1}) \rangle \nonumber \\
        = & \ \|x^k + \gamma M(x^{k-1},\xi_{k-1}) - x^*\|^2 + \|\gamma M(x^{k-1}, \xi_{k-1}) - \gamma M(x^k,\xi_k)\|^2 - \gamma^2\|M(x^{k-1}, \xi_{k-1})\|^2 \nonumber\\
        & \ + 2 \gamma \langle x^k + \gamma M(x^{k-1},\xi_{k-1}) - x^*, M(x^{k-1}, \xi_{k-1}) - M(x^k,\xi_k)\rangle \nonumber\\
        & \ - 2\gamma \langle x^k - \gamma M(x^k,\xi_k) +  \gamma M(x^{k-1},\xi_{k-1}) -x^*, M(x^{k-1},\xi_{k-1}) \rangle \nonumber\\
        = & \ \|x^k + \gamma M(x^{k-1},\xi_{k-1}) - x^*\|^2 + \gamma^2(\|M(x^{k-1}, \xi_{k-1}) - M(x^k, \xi_k)\|^2 - \|M(x^{k-1},\xi_{k-1})\|^2) \nonumber\\
        & \ -2\langle \gamma M(x^k,\xi_k), x^k - x^*\rangle \nonumber \\
        \leq & \ \|x^k + \gamma M(x^{k-1},\xi_{k-1}) - x^*\|^2 - 2\gamma\langle M(x^k,\xi_k), x^k-x^*\rangle - \gamma^2 \|M(x^{k-1},\xi_{k-1})\|^2 \nonumber \\
        &\ + \gamma^2\big(\|W_{k-1} + M(x^{k-1} - M(x^k))+ W_k\|^2\big) \nonumber\\
        \leq & \ \|x^k + \gamma M(x^{k-1},\xi_{k-1}) - x^*\|^2 - 2\gamma\langle M(x^k,\xi_k), x^k-x^*\rangle - \gamma^2 \|M(x^{k-1},\xi_{k-1})\|^2 \nonumber \\
        &+ 3\gamma^2(\|W_{k-1}\|^2  + \|M(x^{k-1})-M(x^k)\|^2 + \|W_k\|^2). \nonumber\\
        \leq & \ \|x^k + \gamma M(x^{k-1},\xi_{k-1}) - x^*\|^2 - 2\gamma\langle M(x^k,\xi_k), x^k-x^*\rangle - \gamma^2 \|M(x^{k-1},\xi_{k-1})\|^2 \nonumber\\
        &+ 3\gamma^2(\|W_{k-1}\|^2 + L^2\|x^{k-1}-x^k\|^2 + \|W_k\|^2).
    \end{align}
    \endgroup
On the other hand,
    \begin{align*}
        \|x^k - x^{k-1}\|^2 & \ = \gamma^2 \|-2M(x^{k-1},\xi_{k-1}) + M(x^{k-2},\xi_{k-2}) \|^2\\ 
        & \ = \gamma^2 \|-M(x^{k-1},\xi_{k-1}) + W_{k-1} + M(x^{k-2}) - M(x^{k-1}) + W_{k-2} \|^2\\ 
        & \ \leq 4 \gamma^2 (\|M(x^{k-1},\xi_{k-1})\|^2 + \|W_{k-1}\|^2 + L^2\|x^{k-1} - x^{k-2}\|^2 + \|W_{k-2}\|^2),
    \end{align*}
therefore,
    \begin{align}\label{eq1alg2}
    (1-4\gamma^2L^2)  \|x^k - x^{k-1}\|^2  \leq & \ 4\gamma^2 (\|M(x^{k-1},\xi_{k-1})\|^2 + \|W_{k-1}\|^2  + \|W_{k-2}\|^2) \nonumber \\
    & \ + 4 \gamma ^2L^2 \left(\|x^{k-1} - x^{k-2}\|^2 -   \|x^k - x^{k-1}\|^2 \right).
    \end{align}
Plugging \eqref{eq1alg2} into \eqref{eq1alg1}, we obtain
    \begin{align*}
       & \ \|x^{k+1} + \gamma M(x^k,\xi_k)-x^*\|^2\\
        \leq & \ \|x^k + \gamma M(x^{k-1},\xi_{k-1}) - x^*\|^2 - 2\gamma\langle M(x^k,\xi_k), x^k-x^*\rangle - \gamma^2 \|M(x^{k-1},\xi_{k-1})\|^2 \nonumber\\
& + \frac{12\gamma^4L^2}{1-4\gamma^2L^2} \|M(x^{k-1},\xi_{k-1})\|^2 + 3 \gamma^2 \|W_k\|^2 + \left(3\gamma^2 + \frac{12\gamma^4L^2}{1-4\gamma^2L^2}\right) \|W_{k-1}\|^2 + \frac{12\gamma^4L^2}{1-4\gamma^2L^2} \|W_{k-2}\|^2\\ 
& + \frac{12\gamma^4L^4}{1-4\gamma^2L^2}\left(\|x^{k-1} - x^{k-2}\|^2 -   \|x^k - x^{k-1}\|^2 \right)\\
= & \ \|x^k + \gamma M(x^{k-1},\xi_{k-1}) - x^*\|^2 - 2\gamma\langle M(x^k,\xi_k), x^k-x^*\rangle + \frac{16\gamma^4L^2-\gamma^2}{1-4\gamma^2L^2} \|M(x^{k-1},\xi_{k-1})\|^2 \nonumber\\
&  + 3 \gamma^2 \|W_k\|^2 + \left(3\gamma^2 + \frac{12\gamma^4L^2}{1-4\gamma^2L^2}\right) \|W_{k-1}\|^2 + \frac{12\gamma^4L^2}{1-4\gamma^2L^2} \|W_{k-2}\|^2\\ 
& + \frac{12\gamma^4L^4}{1-4\gamma^2L^2}\left(\|x^{k-1} - x^{k-2}\|^2 -   \|x^k - x^{k-1}\|^2 \right)\\
\leq & \ \|x^k + \gamma M(x^{k-1},\xi_{k-1}) - x^*\|^2 - 2\gamma\langle M(x^k,\xi_k), x^k-x^*\rangle + \frac{\gamma^2(16\gamma^2L^2-1)}{2(1-4\gamma^2L^2)} \|M(x^{k-1})\|^2 \nonumber\\
& +  3 \gamma^2 \|W_k\|^2 + \frac{\gamma^2(16\gamma^2L^2+5)}{2(1-4\gamma^2L^2)} \|W_{k-1}\|^2 + \frac{12\gamma^4L^2}{1-4\gamma^2L^2} \|W_{k-2}\|^2\\ 
& + \frac{12\gamma^4L^4}{1-4\gamma^2L^2}\left(\|x^{k-1} - x^{k-2}\|^2 -   \|x^k - x^{k-1}\|^2 \right),
\end{align*}
as claimed.
\end{proof}

The following theorem provides ergodic upper bounds in expectation for the squared norm of the operator and the gap function for \eqref{eq:SOGDA}.

\begin{thm}\label{thm:average-iterate-rate-sogda}
Let $x^* \in \zer M$ and $\gamma < \frac{1}{4L}$. For all $K \geq 0$ it holds
    \begin{align*}
 & \ \min_{k=1, ..., K+1}  \mathbb{E}\left( \|M(x^k)\|^2 \right) \leq  \mathbb{E}\left(\frac{1}{K+1}\sum_{k=1}^{K+1}{\|M(x^k)\|^2} \right) \\
\leq & \ \frac{1}{K+1} \frac{2(1-4\gamma^2L^2)}{\gamma^2(1-16\gamma^2L^2)}\left(4\|x^1-x^*\|^2  + \frac{4 \gamma^2 L^2(1-\gamma^2 L^2)}{1- 4 \gamma^2L^2} \|x^{1} - x^{0}\|^2 + 8\gamma^2\sigma_*^2 \right) + \frac{16\gamma^2L^2+11}{1-16\gamma^2L^2} \sigma_*^2
    \end{align*}
    and
    \begin{align*}
        \mathbb{E}\left(\frac{1}{K+1} \sum_{k=2}^{K+2} \langle M(x^k), x^k-x^*\rangle \right) \leq & \ \frac{1}{K+1} \left(\frac{2\|x^1-x^*\|^2}{\gamma}  + \frac{2 \gamma L^2(1-\gamma^2 L^2)}{1- 4 \gamma^2L^2} \|x^{1} - x^{0}\|^2 + 4 \gamma \sigma_*^2 \right)\\
        & \ + \frac{\gamma(16\gamma^2L^2+11)}{4(1-4\gamma^2L^2)} \sigma_*^2.
    \end{align*}
\end{thm}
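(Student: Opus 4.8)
The plan is to turn the one-step estimate of Lemma~\ref{lem:for-average-iterate-rate-sogda} into a telescoping inequality for a Lyapunov sequence. For $k \geq 2$ define
$$\Phi_k := \|x^k + \gamma M(x^{k-1},\xi_{k-1}) - x^*\|^2 + \frac{12\gamma^4L^4}{1-4\gamma^2L^2}\,\|x^{k-1}-x^{k-2}\|^2 ,$$
so that, adding $\frac{12\gamma^4L^4}{1-4\gamma^2L^2}\|x^k-x^{k-1}\|^2$ to both sides, Lemma~\ref{lem:for-average-iterate-rate-sogda} becomes
$$\Phi_{k+1} \leq \Phi_k - 2\gamma\langle M(x^k,\xi_k), x^k - x^*\rangle - \frac{\gamma^2(1-16\gamma^2L^2)}{2(1-4\gamma^2L^2)}\|M(x^{k-1})\|^2 + 3\gamma^2\|W_k\|^2 + \frac{\gamma^2(16\gamma^2L^2+5)}{2(1-4\gamma^2L^2)}\|W_{k-1}\|^2 + \frac{12\gamma^4L^2}{1-4\gamma^2L^2}\|W_{k-2}\|^2 \quad \forall k \geq 2,$$
the coefficient $\frac{\gamma^2(1-16\gamma^2L^2)}{2(1-4\gamma^2L^2)}$ being nonnegative precisely because $\gamma < \frac{1}{4L}$ (which also guarantees $1-4\gamma^2L^2>0$).

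Next I would take expectations. Since $x^k$ is ${\cal F}_k$-measurable, unbiasedness of the estimator gives $\mathbb{E}\langle M(x^k,\xi_k), x^k-x^*\rangle = \mathbb{E}\langle M(x^k), x^k-x^*\rangle$, which is $\geq 0$ by monotonicity of $M$ together with $x^*\in\zer M$, while bounded variance gives $\mathbb{E}\|W_j\|^2 \leq \sigma_*^2$ for every index $j$. Summing over $k=2,\dots,K+2$, the $\Phi$-terms telescope and $\mathbb{E}(\Phi_{K+3})\geq 0$ may be dropped; collecting the three noise families via
$$3\gamma^2 + \frac{\gamma^2(16\gamma^2L^2+5)}{2(1-4\gamma^2L^2)} + \frac{12\gamma^4L^2}{1-4\gamma^2L^2} = \frac{\gamma^2(16\gamma^2L^2+11)}{2(1-4\gamma^2L^2)}$$
and re-indexing $\sum_{k=2}^{K+2}\|M(x^{k-1})\|^2 = \sum_{k=1}^{K+1}\|M(x^k)\|^2$, this yields the master estimate
$$2\gamma\sum_{k=2}^{K+2}\mathbb{E}\langle M(x^k), x^k-x^*\rangle + \frac{\gamma^2(1-16\gamma^2L^2)}{2(1-4\gamma^2L^2)}\sum_{k=1}^{K+1}\mathbb{E}\|M(x^k)\|^2 \leq \mathbb{E}(\Phi_2) + (K+1)\,\frac{\gamma^2(16\gamma^2L^2+11)}{2(1-4\gamma^2L^2)}\,\sigma_*^2 .$$

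It remains to bound the initial energy $\mathbb{E}(\Phi_2)$; this is the only slightly delicate step, since Lemma~\ref{lem:for-average-iterate-rate-sogda} is valid only for $k\geq 2$ whereas the claimed bounds are expressed through $x^0,x^1$. From the update $x^2 = x^1 - 2\gamma M(x^1,\xi_1) + \gamma M(x^0,\xi_0)$ one gets $x^2 + \gamma M(x^1,\xi_1) - x^* = (x^1-x^*) - \gamma(M(x^1)-M(x^0)) - \gamma(W_1-W_0)$; as $x^0,x^1$ are deterministic the cross term has zero expectation, so
$$\mathbb{E}\|x^2 + \gamma M(x^1,\xi_1) - x^*\|^2 = \|(x^1-x^*) - \gamma(M(x^1)-M(x^0))\|^2 + \gamma^2\,\mathbb{E}\|W_1-W_0\|^2 .$$
Bounding the first term by $2\|x^1-x^*\|^2 + 2\gamma^2L^2\|x^1-x^0\|^2$ (Young's inequality and $L$-Lipschitz continuity of $M$), the second by $4\gamma^2\sigma_*^2$, then adding $\frac{12\gamma^4L^4}{1-4\gamma^2L^2}\|x^1-x^0\|^2$ and using $\gamma<\frac{1}{4L}$ to absorb the coefficients, one obtains $\mathbb{E}(\Phi_2) \leq 4\|x^1-x^*\|^2 + \frac{4\gamma^2L^2(1-\gamma^2L^2)}{1-4\gamma^2L^2}\|x^1-x^0\|^2 + 8\gamma^2\sigma_*^2$.

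Finally, both sums on the left of the master estimate are nonnegative, so each may be discarded in turn. Discarding the gap sum and dividing by $\frac{\gamma^2(1-16\gamma^2L^2)}{2(1-4\gamma^2L^2)}(K+1)$, then inserting the bound on $\mathbb{E}(\Phi_2)$, yields the claimed bound on $\frac{1}{K+1}\sum_{k=1}^{K+1}\mathbb{E}\|M(x^k)\|^2$ and hence, trivially, on $\min_{k=1,\dots,K+1}\mathbb{E}\|M(x^k)\|^2$; discarding the operator sum and dividing by $2\gamma(K+1)$ yields the claimed bound on $\frac{1}{K+1}\sum_{k=2}^{K+2}\mathbb{E}\langle M(x^k),x^k-x^*\rangle$. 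Apart from this initial-step bookkeeping and the careful conditioning against $\{{\cal F}_k\}$, every step is a telescoping sum followed by a division, so I expect no further obstacle.
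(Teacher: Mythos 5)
Your proposal is correct and follows essentially the same route as the paper: telescoping the estimate of Lemma~\ref{lem:for-average-iterate-rate-sogda} over $k=2,\dots,K+2$, taking expectations via conditioning on $\mathcal{F}_k$ with the unbiasedness and bounded-variance assumptions, collecting the noise coefficients into $\frac{\gamma^2(16\gamma^2L^2+11)}{2(1-4\gamma^2L^2)}$, bounding the initial energy through $x^0,x^1$, and then discarding each nonnegative left-hand term in turn before dividing. The only cosmetic deviations are your packaging of the two telescoping families into the single Lyapunov quantity $\Phi_k$ and your zero-mean cross-term argument for $\mathbb{E}(\Phi_2)$ (the paper uses a four-term Cauchy--Schwarz bound instead), both of which yield constants dominated by the ones claimed.
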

\begin{proof}
Let $K \geq 0$. According to Lemma~\ref{lem:for-average-iterate-rate-sogda}, we have for all $k \geq 2$
    \begin{align*}
       & \frac{\gamma^2(1-16\gamma^2L^2)}{2(1-4\gamma^2L^2)} \|M(x^{k-1})\|^2 + 2\gamma\langle M(x^k,\xi_k), x^k-x^*\rangle\\
       \leq & \ \|x^k + \gamma M(x^{k-1},\xi_{k-1})-x^*\|^2 - \|x^{k+1} + \gamma M(x^k,\xi_k)-x^*\|^2\\
& \  + \frac{12\gamma^4L^4}{1-4\gamma^2L^2}\left(\|x^{k-1} - x^{k-2}\|^2 -   \|x^k - x^{k-1}\|^2 \right)\\
         & \ + \frac{12\gamma^4L^2}{1-4\gamma^2L^2} \|W_{k-2}\|^2 + \frac{\gamma^2(16\gamma^2L^2+5)}{2(1-4\gamma^2L^2)} \|W_{k-1}\|^2 + 3 \gamma^2 \|W_k\|^2.
    \end{align*}
Summing the above inequality for $k$ from $2$ to $K+2$ and multiplying by $\frac{1}{K+1}$ yields
    \begin{align*}
       & \frac{\gamma^2(1-16\gamma^2L^2)}{2(1-4\gamma^2L^2)} \frac{1}{K+1} \sum_{k=1}^{K+1} \|M(x^{k})\|^2 + \frac{2\gamma}{K+1} \sum_{k=2}^{K+2} \langle M(x^k,\xi_k), x^k-x^*\rangle\\
       \leq & \ \frac{1}{K+1} \left (\|x^2 + \gamma M(x^{1},\xi_{1})-x^*\|^2 - \|x^{K+3} + \gamma M(x^{K+2},\xi_{K+2})-x^*\|^2 \right)\\
       & \ + \frac{1}{K+1} \frac{12\gamma^4L^4}{1-4\gamma^2L^2}\left(\|x^{1} - x^{0}\|^2 -   \|x^{K+2} - x^{K+1}\|^2 \right)\\
         & \ + \frac{1}{K+1} \sum_{k=2}^{K+2} \left(\frac{12\gamma^4L^2}{1-4\gamma^2L^2} \|W_{k-2}\|^2 + \frac{\gamma^2(16\gamma^2L^2+5)}{2(1-4\gamma^2L^2)} \|W_{k-1}\|^2 + 3 \gamma^2 \|W_k\|^2\right)\\
         \leq & \ \frac{1}{K+1} \left(\|x^2 + \gamma M(x^{1},\xi_{1})-x^*\|^2 + \frac{12\gamma^4L^4}{1-4\gamma^2L^2} \|x^{1} - x^{0}\|^2 \right) \\
         & \ + \frac{1}{K+1} \sum_{k=2}^{K+2} \left(\frac{12\gamma^4L^2}{1-4\gamma^2L^2} \|W_{k-2}\|^2 + \frac{\gamma^2(16\gamma^2L^2+5)}{2(1-4\gamma^2L^2)} \|W_{k-1}\|^2 + 3 \gamma^2 \|W_k\|^2\right).
    \end{align*}
For all $k = 2, ..., K+2$ we have that 
$$\mathbb{E}(\langle M(x^k,\xi_k), x^k-x^*\rangle) = \mathbb{E}(\mathbb{E}(\langle M(x^k,\xi_k), x^k-x^*\rangle | {\cal F}_{k})) =  \mathbb{E}( \langle \mathbb{E}(M(x^k,\xi_k) | {\cal F}_{k}), x^k-x^*\rangle) = \langle M(x^k), x^k-x^*\rangle $$
and 
$$\mathbb{E}(\|W_k\|^2) = \mathbb{E}(\mathbb{E}(\|W_k\|^2 | {\cal F}_{k})) \leq \sigma_*^2.$$
This yields
    \begingroup
        \allowdisplaybreaks
    \begin{align*}
       & \frac{\gamma^2(1-16\gamma^2L^2)}{2(1-4\gamma^2L^2)} \mathbb{E}\left(\frac{1}{K+1}\sum_{k=1}^{K+1}{\|M(x^k)\|^2} \right) + 2\gamma \mathbb{E}\left(\frac{1}{K+1} \sum_{k=2}^{K+2} \langle M(x^k), x^k-x^*\rangle \right)\\
       \leq & \ \frac{1}{K+1} \left(\mathbb{E}(\|x^2 + \gamma M(x^{1},\xi_{1})-x^*\|^2) + \frac{12\gamma^4L^4}{1-4\gamma^2L^2} \|x^{1} - x^{0}\|^2 \right) \\
         & \ + \frac{1}{K+1} \sum_{k=2}^{K+2} \mathbb{E}\left(\frac{12\gamma^4L^2}{1-4\gamma^2L^2} \|W_{k-2}\|^2 + \frac{\gamma^2(16\gamma^2L^2+5)}{2(1-4\gamma^2L^2)} \|W_{k-1}\|^2 + 3 \gamma^2 \|W_k\|^2\right)\\
       \leq & \ \frac{1}{K+1} \left(4\|x^1-x^*\|^2 + 4\gamma^2L^2\|x^1-x^0\|^2 + 8\gamma^2\sigma_*^2 + \frac{12\gamma^4L^4}{1-4\gamma^2L^2} \|x^{1} - x^{0}\|^2 \right) + \frac{\gamma^2(16\gamma^2L^2+11)}{2(1-4\gamma^2L^2)} \sigma_*^2\\
       = & \ \frac{1}{K+1} \left(4\|x^1-x^*\|^2  + \frac{4 \gamma^2 L^2(1-\gamma^2 L^2)}{1- 4 \gamma^2L^2} \|x^{1} - x^{0}\|^2 + 8\gamma^2\sigma_*^2 \right) + \frac{\gamma^2(16\gamma^2L^2+11)}{2(1-4\gamma^2L^2)} \sigma_*^2.
    \end{align*}
    \endgroup
This concludes the proof.
\end{proof}

\subsection{Stochastic Extragradient (EG) method}\label{subsec-seg}

The stochastic Extragradient method (EG) reads
\begin{align}\tag{Stochastic EG}\label{eq:SEG}
\begin{split}
    \begin{cases}
        & y^k := x^k - \gamma M(x^k, \xi_k) \\
        & x^{k+1} := x^k - \gamma M(y^k, \eta_k)
    \end{cases} \quad \forall k \geq 0,
\end{split}
\end{align}
where $x^0 \in \R^n$ is a given vector and $\xi_k$ and $\eta_k$ encode the stochasticity appearing at iteration $k$ when evaluating $M$ at $x^k$ and $y^k$, respectively.

We consider the approximation errors
$$W_k := M(x^k, \xi_k) - M(x^k) \quad \mbox{and} \quad Z_k := M(y^k, \eta_k) - M(y^k)  \quad \forall k \geq 0$$
and the sub-sigma algebras
$${\cal F}_0 := \sigma(x^0) \quad \mbox{and} \quad {\cal F}_k := \sigma(x^0,\xi_0, \xi_1, ..., \xi_{k-1}, \eta_0, ..., \eta_{k-1}) \quad \forall k \geq 1.$$
and 
$$\widehat{\cal F}_0 := \sigma(x^0, \xi_0) \quad \mbox{and} \quad 
 \widehat{\cal F}_k := \sigma(x^0,\xi_0, \xi_1, ..., \xi_{k-1}, \xi_k, \eta_0, ..., \eta_{k-1}) \quad \forall k \geq 1.$$

First, establish an inequality that will be essential to the proof of the convergence rate:
\begin{lem}\label{lem:for-average-iterate-rate}
    Let $x^* \in \zer M$. For all $k \geq 0$ it holds
    \begin{align*}
        \|x^{k+1}-x^*\|^2 \leq & \ \|x^k-x^*\|^2-2\gamma\langle M(y^k,\eta_k), y^k-x^*\rangle + 2(3L^2\gamma^2-1)\|M(x^k)\|^2\\
         & \ + \gamma^2 (3\|Z_k\|^2+(1+6L^2\gamma^2)\|W_k\|^2).
    \end{align*}
\end{lem}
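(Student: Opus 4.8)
The plan is to expand $\|x^{k+1}-x^*\|^2$ directly from the update rule $x^{k+1} = x^k - \gamma M(y^k,\eta_k)$, then to massage the cross term by introducing $y^k$ as an intermediate point. First I would write
\[
\|x^{k+1}-x^*\|^2 = \|x^k - x^*\|^2 - 2\gamma\langle M(y^k,\eta_k), x^k - x^*\rangle + \gamma^2\|M(y^k,\eta_k)\|^2,
\]
and split $x^k - x^* = (x^k - y^k) + (y^k - x^*) = \gamma M(x^k,\xi_k) + (y^k - x^*)$ using the definition $y^k = x^k - \gamma M(x^k,\xi_k)$. This produces the target term $-2\gamma\langle M(y^k,\eta_k), y^k - x^*\rangle$ together with an extra term $-2\gamma^2\langle M(y^k,\eta_k), M(x^k,\xi_k)\rangle$.

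Next I would handle the two residual terms $-2\gamma^2\langle M(y^k,\eta_k), M(x^k,\xi_k)\rangle + \gamma^2\|M(y^k,\eta_k)\|^2 = \gamma^2\|M(y^k,\eta_k) - M(x^k,\xi_k)\|^2 - \gamma^2\|M(x^k,\xi_k)\|^2$. The key is then to control $\|M(y^k,\eta_k) - M(x^k,\xi_k)\|^2$: write $M(y^k,\eta_k) - M(x^k,\xi_k) = Z_k - W_k + (M(y^k) - M(x^k))$, apply the inequality $\|a+b+c\|^2 \le 3(\|a\|^2+\|b\|^2+\|c\|^2)$, and use $L$-Lipschitz continuity of $M$ together with $y^k - x^k = -\gamma M(x^k,\xi_k)$ to bound $\|M(y^k)-M(x^k)\|^2 \le L^2\gamma^2\|M(x^k,\xi_k)\|^2$. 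Similarly expand $-\gamma^2\|M(x^k,\xi_k)\|^2$ and $\|M(x^k,\xi_k)\|^2 = \|M(x^k) + W_k\|^2 \le 2\|M(x^k)\|^2 + 2\|W_k\|^2$ (being careful with signs, since this term appears with a negative coefficient one needs a lower bound, $\|M(x^k,\xi_k)\|^2 \ge \tfrac12\|M(x^k)\|^2 - \|W_k\|^2$, or one simply keeps $-\gamma^2\|M(x^k,\xi_k)\|^2 \le 0$ where that suffices).

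Collecting terms, the coefficient of $\|M(x^k)\|^2$ should assemble into $2(3L^2\gamma^2 - 1)$: the $-\gamma^2\|M(x^k,\xi_k)\|^2$ contributes roughly $-2\gamma^2\|M(x^k)\|^2$ after the lower-bound estimate, while the $3\gamma^2 L^2\gamma^2$-type term from the Lipschitz bound, combined through the expansions, yields the $6L^2\gamma^4$ part — so I would need to track the bookkeeping so that the clean form $2(3L^2\gamma^2 - 1)\gamma^2\|M(x^k)\|^2$ emerges (there may be an implicit $\gamma^2$ absorbed into the statement's normalization that I would verify). The $\|Z_k\|^2$ coefficient comes solely from the $3\gamma^2$ factor in the triple-norm split, and the $\|W_k\|^2$ coefficient $\gamma^2(1 + 6L^2\gamma^2)$ accumulates from the $3\gamma^2 \cdot W_k$ term and the $6L^2\gamma^4$ contribution of bounding $\|M(x^k,\xi_k)\|^2$ inside the Lipschitz estimate.

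\textbf{Main obstacle.} The delicate point is the sign handling of the $-\gamma^2\|M(x^k,\xi_k)\|^2$ term: since it carries a negative sign one must \emph{lower}-bound $\|M(x^k,\xi_k)\|^2$ in terms of $\|M(x^k)\|^2$, and it is this lower bound that is responsible for producing the exact constant $2(3L^2\gamma^2-1)$ and the $+1$ inside $\gamma^2(1+6L^2\gamma^2)$ rather than some looser constant. Getting the arithmetic of the three $\gamma^2 L^2$-weighted contributions to consolidate cleanly into $6L^2\gamma^2$ is the only real calculation; everything else is a routine expansion of a squared norm plus Young/Cauchy--Schwarz. I do not expect to need monotonicity of $M$ at this stage — that will be invoked later when this lemma is summed — so the proof should be purely algebraic.
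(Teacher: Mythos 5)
Your plan follows the paper's proof essentially step for step: expand $\|x^{k+1}-x^*\|^2$ from the update, write $x^k-x^*=(y^k-x^*)+\gamma M(x^k,\xi_k)$, complete the square to get $\gamma^2\|M(y^k,\eta_k)-M(x^k,\xi_k)\|^2-\gamma^2\|M(x^k,\xi_k)\|^2$, and bound the first term by $3\|Z_k\|^2+3\|W_k\|^2+3L^2\gamma^2\|M(x^k,\xi_k)\|^2$ via the three-term Young inequality and Lipschitz continuity; you are also right that monotonicity is not needed here. Your worry about the final constants is well founded, but the difficulty sits in the paper, not in your plan: in the regime $\gamma<\tfrac{1}{\sqrt{3}L}$ in which the lemma is later invoked, the coefficient $\gamma^2(3L^2\gamma^2-1)$ of $\|M(x^k,\xi_k)\|^2$ is negative, so, as you observe, one must use the lower bound $\|M(x^k,\xi_k)\|^2\ge\tfrac12\|M(x^k)\|^2-\|W_k\|^2$, which delivers the same structural inequality but with coefficients $\tfrac{\gamma^2}{2}(3L^2\gamma^2-1)$ for $\|M(x^k)\|^2$ and $\gamma^2(4-3L^2\gamma^2)$ for $\|W_k\|^2$, whereas the paper's last display instead applies the upper bound $\|M(x^k,\xi_k)\|^2\le 2\|M(x^k)\|^2+2\|W_k\|^2$ (the correct direction only when $3L^2\gamma^2\ge 1$) and moreover loses a factor $\gamma^2$, which is how the stated $2(3L^2\gamma^2-1)$ and $\gamma^2(1+6L^2\gamma^2)$ arise. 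So your route is the paper's route, and carried out carefully it proves the lemma in the same form, with the (corrected) constants just mentioned rather than the ones printed.
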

\begin{proof} Let $k \geq 0$. We have
\begingroup
        \allowdisplaybreaks
    \begin{align*}
        \|x^{k+1}-x^*\|^2 = & \ \|x^k - \gamma M(y^k,\eta_k)-x^*\|^2
         \\ = & \ \|x^k-x^*\|^2 - 2\gamma \langle x^k - x^*, M(y^k,\eta_k) - M(x^*) \rangle + \gamma^2 \|M(y^k,\eta_k) - M(x^*)\|^2
         \\ = & \ \|x^k-x^*\|^2 - 2\gamma \langle y^k + \gamma M(x^k,\xi_k) - \gamma M(x^*) - x^*, M(y^k,\eta_k) - M(x^*) \rangle
         \\& + \gamma^2 \|M(y^k,\eta_k) - M(x^*)\|^2
         \\= & \ \|x^k-x^*\|^2 - 2\gamma \langle y^k-x^*, M(y^k,\eta_k)-M(x^*)\rangle
         \\& \ - \gamma^2 \left (2\langle M(x^k,\xi_k)-M(x^*), M(y^k,\eta_k)-M(x^*)\rangle - \|M(y^k,\eta_k) - M(x^*)\|^2 \right )
         \\= & \ \|x^k-x^*\|^2 - 2\gamma\langle y^k-x^*, M(y^k,\eta_k)-M(x^*)\rangle - \gamma^2 \|M(x^k,\xi_k)\|^2\\
         & \ + \gamma^2(\|M(y^k,\eta_k)-M(x^k,\xi_k)\|^2)\\
         \leq & \  \|x^k-x^*\|^2 - 2\gamma\langle y^k-x^*,M(y^k,\eta_k)-M(x^*)\rangle - \gamma^2 \|M(x^k,\xi_k)\|^2\\
         & + \gamma^2(3\|M(y^k,\eta_k)-M(y^k)\|^2 + 3\|M(y^k)-M(x^k)\|^2
         + 3\|M(x^k)-M(x^k,\xi_k)\|^2)\\
          \leq & \ \|x^k-x^*\|^2 - 2\gamma \langle y^k-x^*, M(y^k,\eta_k)-M(x^*)\rangle + \gamma^2(3L^2-1) \|M(x^k,\xi_k)\|^2\\
         & + \gamma^2(3\|Z_k\|^2 + 3\|W_k\|^2)\\
         \leq & \ \|x^k-x^*\|^2-2\gamma\langle y^k-x^*,M(y^k,\eta_k)-M(x^*)\rangle + 2(3L^2\gamma^2-1)\|M(x^k)\|^2\\
         & \ + \gamma^2 (3\|Z_k\|^2 +(1+6L^2\gamma^2)\|W_k\|^2),
    \end{align*}
    \endgroup
    as claimed.
\end{proof}

The following theorem provides ergodic upper bounds in expectation for the squared norm of the operator in terms of the sequence $(x^k)_{k \geq 0}$ and the gap function  in terms of the sequence $(y^k)_{k \geq 0}$.

\begin{thm}\label{thm:average-iterate-rate}
Let $x^* \in \zer M$ and $\gamma < \frac{1}{\sqrt{3}L}$. For all $K \geq 0$ it holds
    \begin{align*}
\min_{k=0, ..., K} \mathbb{E} \left(\|M(x^k)\|^2\right) \leq \mathbb{E} \left(\frac{1}{K+1}\sum_{k=0}^{K}{\|M(x^k)\|^2}\right) \leq \frac{1}{K+1} \frac{1}{2(1-3L^2\gamma^2)} \|x^0-x^*\|^2 + \frac{\gamma^2(2 + 3L^2\gamma^2)}{1-3L^2\gamma^2}\sigma_*^2
    \end{align*}
and
    \begin{align*}
\mathbb{E}\left(\frac{1}{K+1}\sum_{k=0}^{K} \langle M(y^k),y^k-x^*\rangle \right) 
\leq  \frac{1}{K+1} \frac{1}{2\gamma} \|x^0-x^*\|^2 + \gamma(2 + 3L^2\gamma^2)\sigma_*^2.
    \end{align*}
\end{thm}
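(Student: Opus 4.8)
The plan is to start from Lemma~\ref{lem:for-average-iterate-rate}, sum the inequality for $k$ from $0$ to $K$, and exploit telescoping. Summing $\|x^{k+1}-x^*\|^2 \leq \|x^k-x^*\|^2 - 2\gamma\langle M(y^k,\eta_k), y^k-x^*\rangle + 2(3L^2\gamma^2-1)\|M(x^k)\|^2 + \gamma^2(3\|Z_k\|^2 + (1+6L^2\gamma^2)\|W_k\|^2)$ collapses the first two terms into $\|x^0-x^*\|^2 - \|x^{K+1}-x^*\|^2 \leq \|x^0-x^*\|^2$. After rearranging, this yields
\begin{align*}
2(1-3L^2\gamma^2)\sum_{k=0}^K \|M(x^k)\|^2 + 2\gamma \sum_{k=0}^K \langle M(y^k,\eta_k), y^k-x^*\rangle \leq \|x^0-x^*\|^2 + \gamma^2\sum_{k=0}^K(3\|Z_k\|^2 + (1+6L^2\gamma^2)\|W_k\|^2).
\end{align*}
Since $\gamma < \frac{1}{\sqrt{3}L}$ the coefficient $1-3L^2\gamma^2$ is strictly positive, so both sums on the left are meaningfully bounded once we control the expectation of the right-hand side.

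Next I would take expectations and handle the two stochastic terms using the tower property with the sub-sigma algebras $\mathcal{F}_k$ and $\widehat{\mathcal{F}}_k$ introduced just before the lemma. For the cross term, $\mathbb{E}(\langle M(y^k,\eta_k), y^k-x^*\rangle) = \mathbb{E}(\langle \mathbb{E}(M(y^k,\eta_k)\mid \widehat{\mathcal{F}}_k), y^k-x^*\rangle) = \mathbb{E}(\langle M(y^k), y^k-x^*\rangle)$, because $y^k$ is $\widehat{\mathcal{F}}_k$-measurable (it depends only on $x^k$ and $\xi_k$) and the estimator is unbiased. For the noise terms, $\mathbb{E}(\|W_k\|^2) = \mathbb{E}(\mathbb{E}(\|W_k\|^2 \mid \mathcal{F}_k)) \leq \sigma_*^2$ and likewise $\mathbb{E}(\|Z_k\|^2) = \mathbb{E}(\mathbb{E}(\|Z_k\|^2 \mid \widehat{\mathcal{F}}_k)) \leq \sigma_*^2$, so $\gamma^2\sum_{k=0}^K(3\|Z_k\|^2 + (1+6L^2\gamma^2)\|W_k\|^2)$ has expectation at most $(K+1)\gamma^2(3 + 1 + 6L^2\gamma^2)\sigma_*^2 = (K+1)\gamma^2(4+6L^2\gamma^2)\sigma_*^2$.

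Then I would split the resulting master inequality two ways. Dropping the nonnegative gap-function sum (monotonicity of $M$ and $x^*\in\zer M$ give $\langle M(y^k), y^k-x^*\rangle \geq 0$) and dividing by $2(1-3L^2\gamma^2)(K+1)$ produces the first bound, with constant $\frac{1}{2(1-3L^2\gamma^2)}$ on the initial term and $\frac{\gamma^2(4+6L^2\gamma^2)}{2(1-3L^2\gamma^2)} = \frac{\gamma^2(2+3L^2\gamma^2)}{1-3L^2\gamma^2}$ on $\sigma_*^2$; the min-over-$k$ bound follows since the minimum of finitely many reals is at most their average. Dropping instead the nonnegative operator-norm sum and dividing by $2\gamma(K+1)$ gives the second bound, with $\frac{1}{2\gamma}$ on the initial term and $\frac{\gamma(4+6L^2\gamma^2)}{2}\sigma_*^2 = \gamma(2+3L^2\gamma^2)\sigma_*^2$. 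The computations are essentially routine; the only mild subtlety is bookkeeping the correct measurability — making sure $y^k$ is measurable with respect to $\widehat{\mathcal{F}}_k$ but that $\eta_k$ is not, which is exactly why the two distinct filtrations $\mathcal{F}_k \subseteq \widehat{\mathcal{F}}_k$ were set up, and confirming that the coefficient $1-3L^2\gamma^2$ stays positive under the stated stepsize restriction so that dividing through preserves the inequality direction.
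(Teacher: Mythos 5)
Your proposal is correct and follows essentially the same route as the paper's proof: rearrange the inequality of Lemma~\ref{lem:for-average-iterate-rate}, sum and telescope, take expectations via the tower property with $\mathcal{F}_k$ and $\widehat{\mathcal{F}}_k$, and then drop each of the two nonnegative terms in turn to obtain the two bounds, with the same constants $\frac{\gamma^2(2+3L^2\gamma^2)}{1-3L^2\gamma^2}$ and $\gamma(2+3L^2\gamma^2)$. No gaps to report.
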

\begin{proof}
Let $K \geq 0$. According to Lemma~\ref{lem:for-average-iterate-rate}, we have for all $k \geq 0$
    \begin{align*}
        2(1-3L^2\gamma^2)\|M(x^k)\|^2 + 2\gamma\langle M(y^k,\eta_k), y^k-x^*\rangle \leq & \ \|x^k-x^*\|^2 - \|x^{k+1}-x^*\|^2\\
        & + \gamma^2 (3\|Z_k\|^2+(1+6L^2\gamma^2)\|W_k\|^2).        
    \end{align*}
Summing the above inequality for $k$ from $0$ to $K$ and multiplying by $\frac{1}{K+1}$ it yields
    \begin{align*}
        & \ \frac{2(1-3L^2\gamma^2)}{K+1}\sum_{k=0}^{K}\|M(x^k)\|^2 + \frac{2\gamma}{K+1}\sum_{k=0}^{K}{\langle M(y^k,\eta_k), y^k-x^*\rangle}\\
        \leq & \ \frac{1}{K+1}(\|x^0-x^*\|^2-\|x^{K+1}-x^*\|^2) + \frac{\gamma^2}{K+1}\left(\sum_{k=0}^{K}3\|Z_k\|^2+(1+6L^2\gamma^2)\|W_k\|^2\right)\\
        \leq & \ \frac{1}{K+1} \|x^0-x^*\|^2 + \frac{\gamma^2}{K+1}\left(\sum_{k=0}^{K}3\|Z_k\|^2+(1+6L^2\gamma^2)\|W_k\|^2\right).
    \end{align*}
For all $k = 0, ..., K$ we have that 
$$\mathbb{E}(\langle M(y^k,\eta_k), y^k-x^*\rangle) = \mathbb{E}(\mathbb{E}(\langle M(y^k,\eta_k), y^k-x^*\rangle | \widehat{\cal F}_{k})) =  \mathbb{E}( \langle \mathbb{E}(M(y^k,\eta_k) | \widehat{\cal F}_{k}), y^k-x^*\rangle) = \langle M(y^k), y^k-x^*\rangle,$$
and 
$$\mathbb{E}(\|W_k\|^2) = \mathbb{E}(\mathbb{E}(\|W_k\|^2 | {\cal F}_{k})) \leq \sigma_*^2 \quad \mbox{and} \quad \mathbb{E}(\|Z_k\|^2) = \mathbb{E}(\mathbb{E}(\|Z_k\|^2 | \widehat{\cal F}_{k})) \leq \sigma_*^2.$$
This yields
    \begin{align*}
       & \ 2(1-3L^2\gamma^2) \mathbb{E} \left(\frac{1}{K+1}\sum_{k=0}^{K}{\|M(x^k)\|^2}\right) + 2\gamma \mathbb{E} \left(\frac{1}{K+1}\sum_{k=0}^{K}{\langle M(y^k), y^k-x^*\rangle}\right)\\
        \leq & \ \frac{1}{K+1} \|x^0-x^*\|^2 + 2\gamma^2(2 + 3L^2\gamma^2)\sigma_*^2.
    \end{align*}
This concludes the proof.   
\end{proof}

\subsection{Numerical experiments}\label{numexp}

In order to illustrate the convergence behaviour of the two stochastic algorithms, we consider the monotone equation associated to the following minimax problem (see also ~\cite{bot-csetnek-nguyen, ouyang-xu})
\begin{align*}
    \min_{x\in\R^n} \max_{y\in\R^n} \Phi (x,y), 
\end{align*}
where
\begin{align*}
    \Phi (x,y) := \frac{1}{2}\langle x, Hx \rangle - \langle x,h\rangle - \langle y,Ax-b \rangle,
\end{align*}
with
\begin{align*}
    A := \frac{1}{4} \begin{pmatrix}
        & & & -1 & 1 \\
        & & \reflectbox{$\ddots$} & \reflectbox{$\ddots$} &\\
        & -1 & 1 & \\
        -1 & 1 & & \\
        1 & & & &
    \end{pmatrix}\in\R^{n\times n},\quad H := 2A^\top A,\quad
    b:= \frac{1}{4}\begin{pmatrix}
        1\\
        1\\
        \vdots\\
        1\\
        1
    \end{pmatrix} \in \R^n \text{ and }
    h := \frac{1}{4}\begin{pmatrix}
        0\\
        0\\
        \vdots\\
        0\\
        1
    \end{pmatrix} \in \R^n.
\end{align*}
For $n=10$, we performed the stochastic OGDA and EG methods with $50000$ iterations for a total of $100$ times. In Figure \eqref{fig:numerical-experiments} we represent the averaged squared norm of the operator 
$$M(x,y) = (\nabla_x \Phi(x,y), -\nabla_y \Phi(x,y))$$ 
for the stochastic OGDA and EG methods.The shaded areas represent the range between the best and worst-case instances, while the strongly colored lines depict the average over all $100$ runs of the algorithms. The error term in the $k$-th iteration was chosen as $\frac{a}{k\sqrt{n}} (1, ..., 1)^T$, where $a$ is normally distributed with mean $0$ and standard deviation $10$.

\begin{figure}[H]
    \centering
    \includegraphics[width=0.8\textwidth]{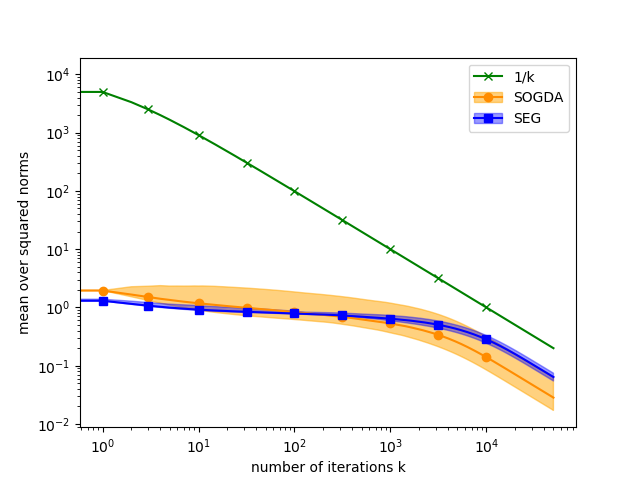}
    \caption{The convergence behaviour of the averaged squared norm of the operator for the stochastic OGDA and EG methods}
    \label{fig:numerical-experiments}
\end{figure}

\renewcommand\thesection{\Alph{section}}
\renewcommand\thesubsection{\thesection.\Alph{subsection}}
\setcounter{section}{0}

\section{Appendix}

In the appendix, we present several auxiliary results utilized in the analysis conducted in this paper. The following theorem will play a crucial role in establishing the existence and uniqueness of a solution to \eqref{eq:SDE-M}.

\begin{thm}(\cite[Theorem A.7]{s.-fadili-attouch}, \cite[Theorem 5.2.1]{oksendal})\label{thm:help-existence-uniqueness}
    Let $F:\R_+\times\R^n \rightarrow \R^n$ and $G:\R_+\times\R^n \rightarrow \R^{n \times m}$ be measurable functions satisfying, for every $T>0$
    \begin{align*}
        \|F(t,x)-F(t,y)\| + \|G(t,x)-G(t,y)\|_F \leq C_1 \|x-y\|, \quad \forall x,y\in\R^n \ \forall t\in[0,T],
    \end{align*}
    for some constant $C_1\geq 0$. Then the stochastic differential equation
    \begin{align*}
    \tag{SDE-gen}
    \label{SDE-gen}
        \begin{cases}
            & dX(t)= F(t,X(t)) dt + G(t,X(t)) dW(t) \quad \forall t\in [0,T],
            \\& X(0)=X_0,
        \end{cases}
    \end{align*}
    where $W$ is an $\mathcal{F}_t$-adapted $m$-dimensional Brownian motion, has a unique solution $X\in S_{n}^{\nu}[0,T]$, for every $\nu\geq 2$.
\end{thm}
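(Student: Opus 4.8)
The statement is the classical It\^o existence and uniqueness theorem for SDEs with globally (in space) Lipschitz coefficients, and the natural route is the Picard iteration (successive approximations) carried out in the complete metric space $S_n^\nu[0,T]$, equipped with the norm $X \mapsto (\mathbb{E}(\sup_{t\le T}\|X(t)\|^\nu))^{1/\nu}$. The plan is to fix $T>0$, prove existence and uniqueness on $[0,T]$, and then concatenate over $T$. Since the Lipschitz bound yields the linear growth estimates $\|F(t,x)\|\le\|F(t,0)\|+C_1\|x\|$ and $\|G(t,x)\|_F\le\|G(t,0)\|_F+C_1\|x\|$, and $t\mapsto F(t,0)$, $t\mapsto G(t,0)$ are bounded on the compact interval $[0,T]$ in the situations relevant to this paper, the map
\[
(\Phi X)(t):=X_0+\int_0^t F(s,X(s))\,ds+\int_0^t G(s,X(s))\,dW(s)
\]
sends continuous, progressively measurable processes with finite $\nu$-th moment to processes of the same type; checking this uses the It\^o isometry (more precisely the Burkholder--Davis--Gundy inequality for the running supremum when $\nu>2$) together with H\"older's inequality in the time variable.

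The core of the argument is a contraction-type estimate. Setting $X^{(0)}(t)\equiv X_0$ and $X^{(k+1)}:=\Phi X^{(k)}$, I would derive, for all $t\in[0,T]$,
\[
\mathbb{E}\Big(\sup_{r\le t}\|X^{(k+1)}(r)-X^{(k)}(r)\|^\nu\Big)\le C_2\int_0^t \mathbb{E}\Big(\sup_{r\le s}\|X^{(k)}(r)-X^{(k-1)}(r)\|^\nu\Big)\,ds,
\]
with $C_2$ depending only on $\nu$, $C_1$, $T$ and the BDG constant; this follows from $\|F(s,x)-F(s,y)\|+\|G(s,x)-G(s,y)\|_F\le C_1\|x-y\|$ combined with Doob's maximal inequality and the BDG inequality. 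Iterating gives $\mathbb{E}(\sup_{r\le T}\|X^{(k+1)}(r)-X^{(k)}(r)\|^\nu)\le A\,(C_2T)^k/k!$, so $(X^{(k)})_k$ is Cauchy in $S_n^\nu[0,T]$ and, via Borel--Cantelli applied to the summable tail, converges uniformly on $[0,T]$ almost surely to a limit $X\in S_n^\nu[0,T]$, which is continuous and progressively measurable. Passing to the limit in $X^{(k+1)}=\Phi X^{(k)}$ — using the Lipschitz continuity of $F(s,\cdot)$, $G(s,\cdot)$ together with dominated convergence for the Lebesgue integral and the It\^o isometry for the stochastic integral — shows $X=\Phi X$, i.e. $X$ solves \eqref{SDE-gen}.

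For uniqueness, given two solutions $X,Y\in S_n^\nu[0,T]$, I would set $h(t):=\mathbb{E}(\sup_{r\le t}\|X(r)-Y(r)\|^2)$, which is finite since both processes lie in $S_n^2[0,T]$, and obtain $h(t)\le C_3\int_0^t h(s)\,ds$ by applying the same maximal/BDG and Lipschitz estimates to $X-Y=\Phi X-\Phi Y$; Gr\"onwall's inequality then forces $h\equiv 0$, so $X$ and $Y$ are indistinguishable on $[0,T]$. Since the unique solutions on $[0,T_1]$ and $[0,T_2]$ with $T_1<T_2$ must agree on $[0,T_1]$ by uniqueness, they patch together into a unique $X\in S_n^\nu$, giving the claim. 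The main technical obstacle is keeping all moments finite \emph{before} invoking Gr\"onwall: one must either assume enough integrability of $F(\cdot,0)$ and $G(\cdot,0)$ (as is implicit here) or run the estimates under a localizing sequence of stopping times $\tau_R:=\inf\{t:\|X(t)\|\ge R\}$ and let $R\to+\infty$ afterwards; the secondary delicate point is the correct form of the $L^\nu$-bound on the running supremum of the stochastic integral via BDG when $\nu>2$, which is what makes the conclusion hold for every $\nu\ge2$ rather than only $\nu=2$.
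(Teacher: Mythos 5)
This statement is not proved in the paper at all: it is quoted as a background result, with the proof delegated to the cited sources (\cite[Theorem 5.2.1]{oksendal} and \cite[Theorem A.7]{s.-fadili-attouch}), so there is no in-paper argument to compare against. Your Picard/successive-approximation proof is exactly the classical argument behind those citations, and it is sound in outline: the Lipschitz hypothesis gives the contraction-type estimate via H\"older in time, Doob's maximal inequality and Burkholder--Davis--Gundy (the latter being what upgrades the conclusion from $\nu=2$ to all $\nu\geq 2$), the factorial decay $\,(C_2T)^k/k!\,$ plus Borel--Cantelli yields an a.s.\ uniformly convergent limit in $S_n^\nu[0,T]$, and Gr\"onwall gives uniqueness, after which the solutions on nested intervals patch together. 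The one point worth emphasizing is the caveat you already flagged: as printed, the theorem assumes only the Lipschitz condition, whereas the classical result also needs a linear-growth/integrability hypothesis (equivalently, suitable integrability of $t\mapsto F(t,0)$ and $t\mapsto G(t,0)$ on $[0,T]$) together with $\mathbb{E}(\|X_0\|^\nu)<+\infty$ in order for the zeroth Picard step, and hence membership in $S_n^\nu[0,T]$, to make sense; these hypotheses are implicit here and are satisfied in the paper's application in Theorem~\ref{thm:existence-uniqueness}, where $F(t,0)=(\dot\mu(t)-\gamma(t))M_{\mu(t)}(0)$ and $G(t,0)=\sigma(t,J_{\mu(t)M}(0))$ are bounded on compact intervals and $X_0$ is fixed. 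With that understanding, your proposal is a correct reconstruction of the cited theorem.
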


Next, we introduce It\^{o}'s formula (see~\cite{evans}).
\begin{prop}\label{prop:ito-formula}
Let $X \in S^\nu_n$ be the solution of \eqref{SDE-gen}
    \begin{align*}
        \begin{cases}
            & dX(t)= F(t,X(t)) dt + G(t,X(t)) dW(t) \quad \forall t\geq 0,
            \\& X(0)=X_0,
        \end{cases}
    \end{align*}
    where $W$ is an $\mathcal{F}_t$-adapted $m$-dimensional Brownian motion, and, for all $T>0$, $F=(F_1, ..., F_n): \R_+\times\R^n \rightarrow \R^n$ satisfies
    \begin{align*}
        \mathbb{E}\left( \int_0^T |F_i(t,X(t))| dt \right) < +\infty,
    \end{align*}
    and $G=(G_{ij})_{i=\{1,\ldots,n\},j=\{1,\ldots,n\}} :\R_+\times\R^n \rightarrow \R^{n \times m}$ satisfies
    \begin{align*}
        \mathbb{E}\left( \int_0^T |G_{ij}(t,X(t))|^2 dt \right) < +\infty.
    \end{align*}
Further, let $\phi:\R_+\times\R^n \rightarrow \R$ be such that $\phi(\cdot,x)\in\text{C}^1(\R_+)$ for all $x\in\R^n$ and $\phi(t,\cdot)\in\text{C}^2(\R^n)$ for all $t\geq 0$. Then the process
    \begin{align*}
        \Tilde{X}(t)=\phi(t,X(t))
    \end{align*}
    is an It\^{o} process such that
    \begin{align*}
        d\Tilde{X}(t) = \frac{d}{dt} \phi(t,X(t)) dt &+ \sum_{i=1}^{n} \frac{d}{dx_i} \phi(t,X(t)) dX^i(t)
        \\&+ \frac{1}{2} \sum_{i,j=1}^{n} \frac{d^2}{dx_i dx_j} \phi(t,X(t))\sum_{\ell=1}^{m} G_{i\ell}(t,X(t)) G_{j\ell}(t,X(t)) dt \quad \forall t \geq 0.
    \end{align*}
    Further, if, for all $T>0$,
    \begin{align*}
        \mathbb{E}\left(\int_0^T \|\sigma^\top(s,(X(s),Y(s)))\nabla_x\phi(s,X(s))\|^2 ds\right)<+\infty,
    \end{align*}
    then $\int_0^t\left\langle \sigma^\top(s,(X(s),Y(s)))\nabla_x\phi(s,(X(s),Y(s))), dW(s) \right\rangle$ is for all $t \geq 0$ a square-integrable continuous martingale with expected value 0.
\end{prop}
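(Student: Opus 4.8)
The plan is to follow the classical proof of It\^o's formula: establish it first for a convenient dense class of test functions and for bounded coefficients, and then pass to the general case by localization. As a first reduction, I would introduce the stopping times $\tau_R := \inf\{ t \ge 0 : \|X(t)\| \ge R \ \text{or} \ \int_0^t ( |F(s,X(s))| + \|G(s,X(s))\|_F^2 )\, ds \ge R \}$ and work with the stopped process $X(\cdot\wedge\tau_R)$, for which $X$, the drift, and the diffusion are bounded on $[0,T]$; the integrability hypotheses on $F$ and $G$ guarantee $\tau_R\to+\infty$ a.s. as $R\to+\infty$, and at the end one recovers the full statement by letting $R\to+\infty$, using dominated convergence for the ordinary integrals and the It\^o isometry for the stochastic integral. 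In addition it suffices to prove the identity for test functions of the product form $\phi(t,x)=\psi(t)\chi(x)$ with $\psi\in C^1(\R_+)$ and $\chi\in C^2(\R^n)$ compactly supported, since finite sums of such products approximate an arbitrary admissible $\phi$ together with $\partial_t\phi$, $\nabla_x\phi$, $\nabla_x^2\phi$ uniformly on compact sets, and both sides of the claimed identity depend continuously on $\phi$ in this topology.

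For such a $\phi$, fix $t>0$ and a partition $0=t_0<t_1<\cdots<t_N=t$ with vanishing mesh. Writing $\phi(t,X(t))-\phi(0,X(0))=\sum_{k=0}^{N-1}\big(\phi(t_{k+1},X(t_{k+1}))-\phi(t_k,X(t_k))\big)$ and applying Taylor's theorem (first order in time, second order in space) on each subinterval, each increment decomposes into a term $\partial_t\phi(t_k,X(t_k))\,\Delta t_k$, a first-order spatial term $\sum_i\partial_{x_i}\phi(t_k,X(t_k))\,\Delta X^i_k$, a second-order spatial term $\tfrac{1}{2}\sum_{i,j}\partial_{x_ix_j}\phi(t_k,X(t_k))\,\Delta X^i_k\Delta X^j_k$, and remainders of higher order. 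Inserting $\Delta X^i_k=\int_{t_k}^{t_{k+1}}F_i\,ds+\int_{t_k}^{t_{k+1}}\sum_\ell G_{i\ell}\,dW^\ell$, the time term plus the first-order spatial term converge in $L^2(\Omega)$ to $\int_0^t\partial_t\phi\,ds+\int_0^t\sum_i\partial_{x_i}\phi\,dX^i(s)$, the latter being the Riemann integral against $F$ plus the It\^o integral against $G\,dW$; this is the definition of the It\^o integral together with the fact that left-endpoint Riemann sums of a continuous adapted integrand converge to it. In the second-order term, the crucial point is the quadratic-variation identity: up to terms vanishing in $L^2$ one may replace $\Delta X^i_k\Delta X^j_k$ by $\int_{t_k}^{t_{k+1}}\sum_\ell G_{i\ell}G_{j\ell}\,ds$, because the contributions of the drift and of the $\Delta t_k$ factors are of lower order, and because $\sum_k(\Delta W^\ell_k)(\Delta W^{\ell'}_k)\to\delta_{\ell\ell'}\,t$ in $L^2$. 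The Taylor remainders and all genuinely higher-order increments vanish in $L^2$ from the uniform continuity of the derivatives of $\phi$ on the bounded range of the stopped process together with the bound $\mathbb{E}\sum_k\|\Delta X_k\|^2\le C\,t$. Collecting the limits yields the formula for the stopped process, and removing the stopping completes the first part.

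For the final assertion, under $\mathbb{E}\big(\int_0^T\|\sigma^\top(s,X(s))\nabla_x\phi(s,X(s))\|^2\,ds\big)<+\infty$ the integrand $s\mapsto\sigma^\top(s,X(s))\nabla_x\phi(s,X(s))$ is progressively measurable, being the composition of the progressively measurable $X$ with continuous maps and the continuous $\nabla_x\phi$, and it is square-integrable on $[0,T]\times\Omega$; hence $t\mapsto\int_0^t\langle\sigma^\top(s,X(s))\nabla_x\phi(s,X(s)),dW(s)\rangle$ is a well-defined continuous $L^2$-bounded martingale by the It\^o isometry, so its expectation is constant and equal to its value $0$ at $t=0$.

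I expect the main obstacle to be the rigorous passage to the limit in the Riemann sums for the second-order term — in particular showing that $\Delta X^i_k\Delta X^j_k$ may be replaced by $\int_{t_k}^{t_{k+1}}(GG^\top)_{ij}\,ds$ in $L^2$ and that all mixed and higher-order pieces are asymptotically negligible — together with the bookkeeping of the localization argument needed to descend from bounded coefficients and compactly supported $\phi$ to the general integrability hypotheses stated in the proposition.
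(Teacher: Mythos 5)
The paper does not prove this proposition at all: it is quoted as a known result (It\^o's formula, cited to Evans/\O ksendal together with the martingale property of the stochastic integral via the It\^o isometry), so there is no in-paper argument to compare against. Your outline — localization by stopping times, reduction to nice test functions, Taylor expansion along partitions with the quadratic-variation replacement $\Delta X^i_k\Delta X^j_k \rightsquigarrow \int (GG^\top)_{ij}\,ds$, and the $L^2$-martingale/zero-expectation claim from the It\^o isometry — is precisely the classical textbook proof found in the cited references, and it is correct as a sketch (modulo the admitted bookkeeping of the remainder estimates and the approximation of a general $C^{1,2}$ function by the product-form test functions, which is standard).
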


The proof of the convergence results in Theorem~\ref{thm:convergence-tdp} relies on the following result.
\begin{thm}(\cite[Theorem 3.9]{mao}, Theorem 3.9)\label{thm:A.9-in-paper}
    Let $\{A(t)\}_{t\geq 0}$ and $\{U(t)\}_{t\geq 0}$ be two continuous adapted increasing processes with $A(0)=U(0)=0$ a.s. Let $\{N(t)\}_{t\geq 0}$ be a real-valued continuous local martingale with $N(0)=0$ a.s. Let $\xi$ be a nonnegative $\mathcal{F}_0$-measurable random variable. Define
    \begin{align*}
        X(t):=\xi + A(t) - U(t) + N(t) \quad \text{for} \quad \forall t\geq 0.
    \end{align*}
    If $X(t)$ is nonnegative and $\lim_{t\rightarrow +\infty}{A(t)} < +\infty$ a.s., then a.s. $\lim_{t\rightarrow +\infty}{X(t)}$ exists and is finite, as well as $\lim_{t\rightarrow +\infty}{U(t)} < +\infty$.
\end{thm}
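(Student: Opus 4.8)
This is a continuous-time Robbins--Siegmund almost-supermartingale convergence statement, and the plan is to reduce it, by localization, to the classical almost sure convergence of nonnegative supermartingales. First I would build a sequence of stopping times that simultaneously controls the local martingale $N$ and the increasing process $A$: since $N$ is continuous with $N(0)=0$ it is locally bounded, and since $A$ is continuous with $\lim_{t\to+\infty}A(t)<+\infty$ a.s.\ it is a.s.\ bounded, so the times
\[
\tau_k:=\inf\{t\ge 0:\ |N(t)|\ge k\ \text{or}\ A(t)\ge k\}
\]
satisfy $\tau_k\uparrow+\infty$ a.s.\ as $k\to+\infty$, while on $[0,\tau_k]$ one has $|N(t\wedge\tau_k)|\le k$ and $A(t\wedge\tau_k)\le k$, so that $N^{\tau_k}:=N(\cdot\wedge\tau_k)$ is a bounded, hence genuine, continuous martingale.

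Next I would exhibit the relevant supermartingale. Since $X\ge 0$ and $A(t\wedge\tau_k)\le k$, the continuous process
\[
Y_k(t):=X(t\wedge\tau_k)+\bigl(k-A(t\wedge\tau_k)\bigr)=\xi+k-U(t\wedge\tau_k)+N(t\wedge\tau_k)
\]
is nonnegative. As $\xi$ need not be integrable, I would further multiply by the $\mathcal{F}_0$-measurable indicator $\mathbf{1}_{\{\xi\le m\}}$; this preserves adaptedness and the supermartingale property and makes the process bounded by $m+2k$. Then $Y_k(t)\mathbf{1}_{\{\xi\le m\}}$ is a nonnegative bounded supermartingale, being the sum of the $t$-constant $\mathcal{F}_0$-measurable term $(\xi+k)\mathbf{1}_{\{\xi\le m\}}$, the martingale $N^{\tau_k}\mathbf{1}_{\{\xi\le m\}}$, and the nonincreasing term $-U(\cdot\wedge\tau_k)\mathbf{1}_{\{\xi\le m\}}$. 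By Doob's supermartingale convergence theorem, $\lim_{t\to+\infty}Y_k(t)\mathbf{1}_{\{\xi\le m\}}$ exists and is finite a.s.

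The remaining step is to read off the two conclusions on $\{\tau_k=+\infty\}\cap\{\xi\le m\}$ and then patch over $k$ and $m$. Taking expectations in $0\le\mathbb{E}\bigl(Y_k(t)\mathbf{1}_{\{\xi\le m\}}\bigr)=\mathbb{E}(\xi\mathbf{1}_{\{\xi\le m\}})+k\,\mathbb{P}(\xi\le m)-\mathbb{E}\bigl(U(t\wedge\tau_k)\mathbf{1}_{\{\xi\le m\}}\bigr)$ gives $\mathbb{E}\bigl(U(t\wedge\tau_k)\mathbf{1}_{\{\xi\le m\}}\bigr)\le m+k$ uniformly in $t$, and monotone convergence as $t\to+\infty$ then yields $U(\tau_k)<+\infty$ a.s.\ on $\{\xi\le m\}$. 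On $\{\tau_k=+\infty\}\cap\{\xi\le m\}$ we thus have $U(t)\uparrow U(\infty)<+\infty$; since $Y_k(t)=\xi+k-U(t)+N(t)$ converges a.s., the local martingale $N(t)$ itself converges a.s.\ to a finite limit, and hence $X(t)=\xi+A(t)-U(t)+N(t)$ converges a.s.\ to a finite limit, using $A(t)\to A(\infty)<+\infty$. Finally, since $\tau_k\uparrow+\infty$ a.s.\ the events $\{\tau_k=+\infty\}$ exhaust $\Omega$ up to a null set and $\bigcup_m\{\xi\le m\}$ has full probability, so both conclusions hold a.s.

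I expect the only genuinely delicate point to be the localization/integrability bookkeeping: neither $\xi$ nor $N$ is assumed integrable, so one cannot apply the supermartingale machinery to $X$ or to $Y_k$ directly, and this is what forces the double truncation --- the stopping times $\tau_k$ together with the $\mathcal{F}_0$-sets $\{\xi\le m\}$. Once the bounded nonnegative supermartingale $Y_k(t)\mathbf{1}_{\{\xi\le m\}}$ is in place, the rest is just Doob's convergence theorem, monotone convergence, and a countable union.
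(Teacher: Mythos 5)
You cannot be compared against an in-paper argument here: the paper states this result as a quoted auxiliary theorem (Mao, Theorem 1.3.9 / Liptser--Shiryaev) and gives no proof, so your proposal has to stand on its own. Most of it does: the $\tau_k$ are stopping times, $N^{\tau_k}$ is a bounded continuous local martingale and hence a true martingale, multiplying by the $\mathcal{F}_0$-measurable indicator $\mathbf{1}_{\{\xi\le m\}}$ preserves the (super)martingale property, $Y_k(t)\mathbf{1}_{\{\xi\le m\}}$ is a bounded nonnegative supermartingale, Doob gives a.s.\ convergence, and the expectation bound plus monotone convergence correctly yields $\lim_{t\to+\infty}U(t\wedge\tau_k)<+\infty$ a.s.\ on $\{\xi\le m\}$.

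The genuine gap is the final patching step. Your conclusions are only established on $\{\tau_k=+\infty\}\cap\{\xi\le m\}$, and the inference ``$\tau_k\uparrow+\infty$ a.s., hence the events $\{\tau_k=+\infty\}$ exhaust $\Omega$ up to a null set'' is false: pointwise divergence of $\tau_k$ does not give $\mathbb{P}\bigl(\bigcup_k\{\tau_k=+\infty\}\bigr)=1$. Since your $\tau_k$ caps $|N|$ at level $k$, the event $\{\tau_k=+\infty\}$ forces $\sup_{t\ge0}|N(t)|\le k$, and a continuous local martingale need not have a.s.\ bounded paths on $[0,+\infty)$ --- for Brownian motion every such $\tau_k$ is a.s.\ finite, so all these events are null. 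Worse, under the hypotheses the a.s.\ boundedness of $N$ is essentially equivalent to the conclusion you are proving (it follows from it, since $A$, $U$ and $X$ all converge), so it cannot be presupposed. The standard repair is to decouple the two truncations: patch only over $\gamma_k:=\inf\{t\ge0:A(t)\ge k\}$, whose events $\{\gamma_k=+\infty\}\supseteq\{\lim_{t\to+\infty}A(t)<k\}$ really do exhaust $\Omega$ a.s.\ precisely because $\lim_{t\to+\infty}A(t)<+\infty$ a.s.; and use a localizing sequence $\sigma_j$ for $N$ only inside expectations and supermartingale inequalities, removing it by letting $j\to+\infty$. Concretely, $\mathbb{E}\bigl(U(t\wedge\gamma_k\wedge\sigma_j)\mathbf{1}_{\{\xi\le m\}}\bigr)\le m+k$ uniformly in $t$ and $j$ gives $\lim_{t\to+\infty}U(t\wedge\gamma_k)<+\infty$ a.s.\ on $\{\xi\le m\}$, and $\bigl(\xi+k-U^{\gamma_k}(t)+N^{\gamma_k}(t)\bigr)\mathbf{1}_{\{\xi\le m\}}$ is a nonnegative local supermartingale with integrable initial value, hence by conditional Fatou a true supermartingale, hence a.s.\ convergent; on $\{\gamma_k=+\infty\}\cap\{\xi\le m\}$ this yields convergence of $N$ and therefore of $X$, and the union over $k,m$ covers $\Omega$ a.s. With this modification your outline becomes the standard proof; as written, the last step does not go through.
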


In order to show that $\lim_{t\rightarrow+\infty}{\|M(X(t))\|} = 0$ a.s., we make use of the following lemma as well as of Doob's martingale convergence theorem presented below.

\begin{lem}(\cite[Lemma A.4]{s.-fadili-attouch})\label{lem:limsup-liminf}
    Let $f:\R_+\rightarrow\R$ be such that $\liminf_{t\rightarrow +\infty}f(t)\neq\limsup_{t\rightarrow +\infty}f(t)$. Then, there exists a constant $\alpha$ such that $\liminf_{t\rightarrow +\infty}f(t)<\alpha<\limsup_{t\rightarrow +\infty} f(t)$, and for every $\beta>0$ we can define a sequence $(t_k)_{k\in\N}\subseteq \R$ with the properties that
    \begin{align*}
        f(t_k)>\alpha \ \mbox{and} \ t_{k+1}>t_k+\beta,\quad\forall k\in\N.
    \end{align*}
\end{lem}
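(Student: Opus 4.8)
The plan is to reduce everything to the elementary characterization of the $\limsup$ and then to build the sequence $(t_k)$ by a greedy recursion.

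\emph{Step 1: choosing a finite threshold.} Since in the extended reals one always has $\liminf_{t\to+\infty} f(t)\le \limsup_{t\to+\infty} f(t)$, the hypothesis forces this inequality to be strict. An open interval of extended reals with two distinct endpoints always contains a real number, so I can fix $\alpha\in\R$ with $\liminf_{t\to+\infty} f(t)<\alpha<\limsup_{t\to+\infty} f(t)$; this already gives the first assertion of the lemma.

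\emph{Step 2: translating $\limsup f>\alpha$ into a tail property.} I would next show that for every $T\ge 0$ there exists $t>T$ with $f(t)>\alpha$. Arguing by contradiction, suppose $f(t)\le\alpha$ for all $t>T_0$ for some $T_0\ge 0$. Then $\sup_{t\ge T_0+1} f(t)\le\alpha$, hence $\limsup_{t\to+\infty} f(t)=\inf_{S\ge0}\sup_{t\ge S} f(t)\le\alpha$, contradicting Step 1.

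\emph{Step 3: the recursion.} Using Step 2 with $T=0$, pick $t_0\in\R_+$ with $f(t_0)>\alpha$. Given $t_k$, apply Step 2 with $T:=t_k+\beta$ to obtain $t_{k+1}>t_k+\beta$ with $f(t_{k+1})>\alpha$. This defines the required sequence; moreover $t_k\ge t_0+k\beta\to+\infty$, which is the form in which the lemma is invoked in the proof of Theorem~\ref{thm:convergence-tdp}. There is no serious obstacle here: the only points demanding a little care are extracting a \emph{finite} $\alpha$ (rather than an extended-real one), so that the comparisons $f(t_k)>\alpha$ are meaningful, and maintaining the \emph{strict} spacing $t_{k+1}>t_k+\beta$ — the latter is exactly why Step 2 is stated with $t>T$, which costs nothing since the contradiction argument only ever uses values $f(t)$ with $t>T_0$.
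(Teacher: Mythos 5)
Your proof is correct. Note that the paper itself offers no proof of this lemma --- it is imported verbatim from Lemma A.4 of the cited reference --- and your argument (fixing a finite $\alpha$ strictly between the two limits, using the characterization $\limsup_{t\rightarrow+\infty}f(t)=\inf_{S\geq 0}\sup_{t\geq S}f(t)$ to find, beyond any threshold $T$, a point where $f$ exceeds $\alpha$, and then building the sequence greedily with spacing $\beta$) is precisely the standard argument one expects for this statement, including the care you take with the extended-real case and the strict spacing $t_{k+1}>t_k+\beta$.
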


\begin{thm}(\cite{doob})\label{thm:cont-martingale}
    Let $\{M(t)\}_{t\geq 0} : \Omega \rightarrow \R$ be a continuous martingale such that $\sup_{t\geq 0}{\mathbb{E}(|M(t)|^p)}<+\infty$ for some $p>1$. Then there exists a random variable $M_\infty$ such that $\mathbb{E}(|M_\infty|^p)<+\infty$ and $\lim_{t\rightarrow +\infty} M(t) = M_\infty$ a.s.
\end{thm}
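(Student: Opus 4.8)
The plan is to derive the statement from two classical facts about discrete martingales -- Doob's upcrossing inequality and Doob's $L^p$ maximal inequality -- together with the elementary fact that boundedness in $L^p$ for $p>1$ forces uniform integrability and, in particular, boundedness in $L^1$. The only genuinely continuous-time ingredient is that $M$ has continuous trajectories, which allows every relevant quantity (suprema, upcrossing counts) to be computed over a countable dense set of times.

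First I would fix rationals $a<b$, and for $T>0$ let $U_{[a,b]}(T)$ denote the number of upcrossings of $[a,b]$ by the restricted path $(M(q))_{q\in\mathbb{Q}\cap[0,T]}$; by continuity of trajectories this coincides with the number of upcrossings of $[a,b]$ by $(M(s))_{0\le s\le T}$. Applying Doob's upcrossing inequality to the discrete martingales obtained by sampling $M$ on finite grids, and passing to the limit along a refining sequence of grids, one obtains $\mathbb{E}\big(U_{[a,b]}(T)\big)\le (b-a)^{-1}\,\mathbb{E}\big((M(T)-a)^{-}\big)\le (b-a)^{-1}\big(\sup_{t\ge0}\mathbb{E}|M(t)|+|a|\big)$, a bound uniform in $T$ since $M$ is bounded in $L^1$. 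Letting $T\to+\infty$ and using monotone convergence, $U_{[a,b]}:=\sup_{T>0}U_{[a,b]}(T)<+\infty$ almost surely, simultaneously for all rational pairs $a<b$. On the event $\{\liminf_{t\to+\infty}M(t)<\limsup_{t\to+\infty}M(t)\}$ one could choose rationals $a<b$ strictly in between, which would force $U_{[a,b]}=+\infty$; hence that event is null and $M_\infty:=\lim_{t\to+\infty}M(t)$ exists almost surely in $[-\infty,+\infty]$.

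To upgrade this to $M_\infty\in L^p$ (hence finite almost surely), I would invoke Fatou's lemma: $\mathbb{E}(|M_\infty|^p)=\mathbb{E}\big(\liminf_{t\to+\infty}|M(t)|^p\big)\le\liminf_{t\to+\infty}\mathbb{E}(|M(t)|^p)\le\sup_{t\ge0}\mathbb{E}(|M(t)|^p)<+\infty$. Alternatively, and giving a slightly stronger conclusion, Doob's $L^p$ maximal inequality applied on finite grids and then passed to the limit yields $\mathbb{E}\big(\sup_{s\ge0}|M(s)|^p\big)\le (p/(p-1))^p\sup_{t\ge0}\mathbb{E}(|M(t)|^p)<+\infty$, which at once gives almost sure finiteness of the running supremum (so that $M_\infty$ is finite almost surely) and an $L^p$-dominating random variable for $M_\infty$. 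The main obstacle is not the discrete theory, which is textbook, but the passage from discrete skeletons to the continuous-time path: one must justify that suprema and upcrossing counts over a countable dense set of times agree with those over $[0,+\infty)$ -- exactly where continuity (right-continuity would already suffice) is used -- and one must be careful with the two nested limits (refining the grid, then $T\to+\infty$), handled via monotone convergence and Fatou.
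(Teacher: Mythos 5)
Your proposal is correct. Note, however, that the paper does not prove this statement at all: Theorem~\ref{thm:cont-martingale} is imported as a classical result (Doob's martingale convergence theorem, cited to \cite{doob}) and is only used as a black box in the proof of Theorem~\ref{thm:convergence-tdp}. What you have written is essentially the standard textbook proof of that classical theorem, and it is sound: $L^p$-boundedness with $p>1$ gives $L^1$-boundedness, the upcrossing inequality applied to discrete skeletons and pushed through refining grids and $T\rightarrow+\infty$ by monotone convergence gives $\sup_{T}U_{[a,b]}(T)<+\infty$ a.s.\ simultaneously for all rational $a<b$, the union over rational pairs shows the limit $M_\infty$ exists a.s.\ in $[-\infty,+\infty]$, and either Fatou or Doob's $L^p$ maximal inequality (the latter giving the stronger conclusion $\mathbb{E}\bigl(\sup_{t\geq 0}|M(t)|^p\bigr)<+\infty$, which is closer to what uniform integrability arguments typically need) yields $\mathbb{E}(|M_\infty|^p)<+\infty$ and hence a.s.\ finiteness. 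Your identification of the only continuous-time subtlety — that path continuity (indeed right-continuity) lets suprema and upcrossing counts over $\mathbb{Q}\cap[0,T]$ agree with those over $[0,T]$, and that the two nested limits are handled by monotonicity — is exactly the right point to flag. The only cosmetic remark is that the paper applies the result to an $\R^n$-valued stochastic integral and to obtain $\mathcal{F}_\infty$-measurability of the limit; both follow at once from your argument applied componentwise, since the a.s.\ limit of $\mathcal{F}_t$-adapted variables is $\mathcal{F}_\infty$-measurable.
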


The derivation of convergence rate in the strongly monotone case necessitates the following lemma.
\begin{lem} (\cite[Lemma A.2]{s.-fadili-attouch})\label{lem:estimate-solution-cauchy-problem}
    Let $t_0\geq 0$ and $T>t_0$. Assume that $h:[t_0,+\infty) \rightarrow \R_+$ is measurable with $h\in\text{L}^1([t_0,T))$, that $\psi:\R_+\rightarrow\R_+$ is continuous and nondecreasing, and the Cauchy problem
    \begin{align*}
        \begin{cases}
            & \varphi'(t)=-\psi(\varphi(t))+h(t)\quad\text{for almost all}\quad t\in[t_0,T]
            \\& \varphi(t_0)=\varphi_0 >0
        \end{cases}
    \end{align*}
    has an absolutely continuous solution $\varphi:[t_0,T]\rightarrow\R_+$. If a bounded lower semicontinuous function $\omega:[t_0,T]\rightarrow\R_+$ satisfies $\omega(t_0)=\varphi_0$ and
    \begin{align*}
        \omega(t_2)\leq\omega(t_1)-\int_{t_1}^{t_2} \psi(\omega(s)) ds + \int_{t_1}^{t_2} h(s) ds \quad \forall t_0\leq t_1<t_2\leq T,
    \end{align*}
then
    \begin{align*}
        \omega(t)\leq \varphi(t) \quad \forall t\in[t_0,T].
    \end{align*}
\end{lem}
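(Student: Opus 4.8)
The statement is a comparison (Gronwall-type) principle for a scalar differential inequality, and the plan is to prove it by a ``last contact'' contradiction argument. First I would record that, since $\varphi$ is absolutely continuous and solves the Cauchy problem, it satisfies the integral identity
\[
\varphi(t_2) = \varphi(t_1) - \int_{t_1}^{t_2}\psi(\varphi(s))\,ds + \int_{t_1}^{t_2}h(s)\,ds \qquad \forall\, t_0 \leq t_1 < t_2 \leq T,
\]
whereas $\omega$ satisfies the corresponding inequality by hypothesis. Subtracting and abbreviating $\Delta := \omega - \varphi$, I would obtain
\[
\Delta(t_2) \leq \Delta(t_1) - \int_{t_1}^{t_2}\bigl(\psi(\omega(s)) - \psi(\varphi(s))\bigr)\,ds \qquad \forall\, t_0 \leq t_1 < t_2 \leq T .
\]
Since $\omega$ is bounded and $\psi$ is continuous, hence bounded on the range of $\omega$, and $h \in L^1$, all the integrals above are finite; and since $\psi$ is nondecreasing, on any subinterval where $\omega(s) \geq \varphi(s)$ for almost every $s$ the integrand is nonnegative, so $\Delta$ is nonincreasing on that subinterval. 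This monotonicity is the engine of the proof.

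I would then argue by contradiction. Suppose $\Delta(\tau) > 0$ for some $\tau \in (t_0,T]$ and set $s_0 := \sup\{\, t \in [t_0,\tau] : \Delta(t) \leq 0\,\}$; this set is nonempty because $\Delta(t_0) = \omega(t_0) - \varphi(t_0) = 0$, so $t_0 \leq s_0 \leq \tau$ and, by definition of the supremum, $\Delta(s) > 0$, that is $\omega(s) > \varphi(s)$, for every $s \in (s_0,\tau]$. Since $\omega$ is lower semicontinuous and $\varphi$ is continuous, $\Delta$ is lower semicontinuous, and taking $t_n \uparrow s_0$ with $\Delta(t_n) \leq 0$ (or else $s_0 = t_0$) gives $\Delta(s_0) \leq \liminf_{t \to s_0}\Delta(t) \leq 0$. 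Applying the monotonicity property on $[s_0,\tau]$, which is legitimate because $\omega \geq \varphi$ there except possibly at the single point $s_0$, yields $\Delta(\tau) \leq \Delta(s_0) \leq 0$, contradicting $\Delta(\tau) > 0$. Hence $\Delta \leq 0$ on $[t_0,T]$, which is the claim.

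The main obstacle is the bookkeeping forced by the lower semicontinuity of $\omega$: one must verify that the ``last contact point'' $s_0$ genuinely satisfies $\Delta(s_0) \leq 0$, and that the monotonicity of $\Delta$ on $[s_0,\tau]$ only requires $\omega \geq \varphi$ almost everywhere rather than everywhere. I note that no Lipschitz or uniqueness assumption on $\psi$ is needed: mere monotonicity of $\psi$, together with the postulated existence of the solution $\varphi$, suffices. An essentially equivalent alternative is to run the same first-crossing argument against the shifted supersolution $\varphi + \varepsilon$ and then let $\varepsilon \downarrow 0$; I would present whichever version reads more cleanly.
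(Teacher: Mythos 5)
Your argument is correct, and in fact the paper itself offers no proof to compare against: Lemma~\ref{lem:estimate-solution-cauchy-problem} is simply quoted from \cite[Lemma A.2]{s.-fadili-attouch} as an auxiliary tool, so your write-up serves as a self-contained substitute. The key steps all check out: the integral identity for the absolutely continuous solution $\varphi$, the differenced inequality for $\Delta=\omega-\varphi$ (whose integrals are finite because $\omega$ is bounded and Borel measurable, being lower semicontinuous, and $\psi$ is continuous on the relevant compact range), the definition of the last contact point $s_0$, the use of lower semicontinuity to force $\Delta(s_0)\le 0$, and the observation that $\omega>\varphi$ on $(s_0,\tau]$ together with $\psi$ nondecreasing makes the integrand nonnegative a.e.\ on $[s_0,\tau]$, so that $\Delta(\tau)\le\Delta(s_0)\le 0$, the desired contradiction. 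One cosmetic point: the parenthetical ``or else $s_0=t_0$'' is not quite the right case split; the clean dichotomy is that either $s_0$ itself lies in the set $\{\Delta\le 0\}$ (which covers $s_0=t_0$, since $\Delta(t_0)=0$), or, by definition of the supremum, there is a sequence from that set increasing to $s_0$, and then lower semicontinuity applies --- either way $\Delta(s_0)\le 0$, so the argument is unaffected. You are also right that no Lipschitz or uniqueness hypothesis on $\psi$ is needed and that the positivity $\varphi_0>0$ plays no role in the comparison; the $\varepsilon$-shifted supersolution variant you mention would work equally well.
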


\nocite{*}
\printbibliography[]

\end{document}